\newtheorem{theo}{Theorem}[subsection]
\newtheorem{lem}[theo]{Lemma}
\newtheorem{prop}[theo]{Proposition}
\newtheorem{coro}[theo]{Corollary}
\newcommand{\w}{\omega}
\newcommand{\E}{\mathcal{E}}
\newcommand{\U}{\mathcal{U}}
\newcommand{\I}{\mathcal{I}}
\newcommand{\J}{\mathcal{J}}
\newcommand{\g}{\gamma}
\newcommand{\al}{\alpha}
\newcommand{\de}{\delta}
\newcommand{\be}{\beta}
\newcommand{\ann}{\mathrm{ ann}}
\def\thebibliography#1{\section*{{\large Bibliographie}\markboth
 {REFERENCES}{REFERENCES}}\list
 {[\arabic{enumi}]}{\settowidth\labelwidth{[#1]}\leftmargin\labelwidth
 \advance\leftmargin\labelsep
 \usecounter{enumi}}
 \def\newblock{\hskip .11em plus .33em minus -.07em}
 \sloppy
 \sfcode`\.=1000\relax}
\def\qed{\relax\ifmmode\hskip2em \Box\else\unskip\nobreak\hskip1em \hfill$\Box$\fi}
\title{Power-associative evolution algebras}
\author{
Moussa OUATTARA\thanks{\texttt{ouatt\_ken@yahoo.fr}}\quad\quad  Souleymane SAVADOGO\thanks{\texttt{sara01souley@yahoo.fr}}\\
D\'epartement de Math\'ematiques et Informatique\\
Universit\'e Ouaga I Pr Joseph KI-ZERBO\\
03 BP 7021 Ouagadougou 03\\
Burkina Faso
}
\date{}
\begin{document}
\maketitle

\begin{abstract}
 The paper is devoted to the study of evolution algebras that are power-associative algebras.

We give the Wedderburn decomposition of evolution algebras that are power-associative algebras and we prove that these algebras are Jordan algebras. Finally, we use this decomposition to classify these algebras up to dimension six.

\medskip 2010 Mathematics Subject Classification~: Primary 17D92, 17A05, Secondary 17D99, 17A60

\textbf{Keywords :} Evolution algebras, Power associativity, Wedderburn decomposition, Nilalgebras.
\end{abstract}
\bigskip\bigskip

\section{Introduction}

The notion of evolution algebras was introduced in 2006 by J.P. Tian and P. Vojt\v echovsk\'y (\cite{Tian2006}). In 2008, J.P. Tian laid the foundations of this algebra in his monograph (\cite{Tian2008}).

In (\cite{Tian2006}, Theorem~1.5), the authors show that evolution algebras are commutative (hence flexible) not necessarily power-associative (nor associative).

Let $\E$ be  $n$-dimensional algebra over a commutative field $F.$ We say that $\E$ is \textit{$n$-dimensional evolution algebra}, if it admits a basis $B = \{e_{i}$ ; $1 \leq i \leq n\}$ such that:
\begin{eqnarray}\label{Eq1}
e_{i}^{2} = \sum_{k=1}^{n} a_{ik}e_{k}, \textrm{ for all $1\leq i\leq n$, the other products being void}.
\end{eqnarray}
The basis $B$ is called \textit{natural basis} of $\E$ and the matrix $(a_{ij})$ is the matrix of structural constants of $\E,$ relative to the natural basis $B.$

In (\cite{Casado2016}, p.127), the authors exhibit a necessary condition for an evolution algebra to be power-associative. They show in particular that the only power-associative evolution algebras are such that $a_{ii}^2=a_{ii}$ ($i=1,\ldots,n$). However, this condition is not sufficient. It is verified for all nil evolution algebras  of the fact that for any $i=1,\ldots,n$, $a_{ii}=0$ (\cite[Proof of Theorem~2.2]{Casas2014}).

In \cite[Example~4.8]{camacho}, for $k = n = 4,$ we find a power-associative algebra which is not associative. Indeed, it is isomorphic to $N_{4,6}$ (see Table~1). Except the type $[1,1,1]$, all algebras in \cite[Table~1]{Elduque2016} are nil associative evolution algebras. In \cite[Table~2]{Elduque2016}, the first line is associative, while the third is power-associative. In \cite[Table~1]{Casado}, the fifth, sixth and seventh algebras, of dimension 3, are associative. They are respectively isomorphic to $\E_{3,4}$, $N_{3,3}(1)$ and $N_{3,2}$ (see Table~1). In \cite[Theorem~6.1]{Hegazi2015}, the algebras $\E_{4,1}$, $\E_{4,2}$, $\E_{4,3}$, $\E_{4,5}$ and $\E_{4,10}$ are associative.

Since, the evolution algebras are not defined by identities and thus do not form a variety of non-associative algebras, such as that of Lie algebras, alternative algebras or Jordan algebras. Therefore, research on these algebras follows different paths \cite{Casado,Casado2016,camacho,Casas2014,Elduque2015,Elduque2016,Hegazi2015,Labra,Tian2006}.


In section 2, we give an example of an algebra which is power-associative but which is not an evolution algebra. We also recall some definitions and known results about nil evolution algebras.

In section 3, we give Wedderburn decomposition of a power-associative evolution algebra. Since the semisimple component is associative, we deduce that an evolution algebra is power-associative if and only if its nil radical is power-associative. We show that the nil power-associative evolution algebras are Jordan algebras. Thus, power-associative evolution algebras are identified with those which are Jordan algebras.

In section 4, we determine power-associative evolution algebras up to dimension six.

\section{Preliminaries}
\subsection{Example of Jordan algebra that is not evolution algebra}

An algebra $\E$ is baric if there is a non trivial homomorphism of algebra $\w : \E \longrightarrow K.$

\medskip
\begin{exe} In \cite[Example~2.2]{Casado2016}, the authors show that zygotic algebra for simple Mendelian inheritance $Z(n,2)$ (case $n=2$) is not an evolution algebra. This algebra is a Jordan algebra. In fact, algebra $Z(n,2)$ is the commutative duplicate of the gametic algebra for simple Mendelian $G(n,2).$
\end{exe}

\medskip
\begin{exe} The  gametic algebra for simple Mendelian inheritance $G(n,2)$ is not an evolution algebra.
\end{exe}

\noindent  \textbf{Proof.} The multiplication table of $G(n,2)$ in the natural basis $B_{1} = \{a_{i}$ ; $1 \leq i \leq n\}$ is given by $a_{i}a_{j} = \frac{1}{2}(a_{i} + a_{j}).$ The linear mapping $\w : a_{i} \longmapsto 1$ is a weight function of $G(n, 2)$ and for all $x, y \in G(n, 2)$, we have $xy = \frac{1}{2}(\w(x)y + \w(y)x)$ (\cite{Buse}). Suppose that $G(n, 2)$ is an evolution algebra in the basis $B_{2} = \{e_{i}$ ; $1 \leq i \leq n\}.$

Since $\w$ is weight function, there is $i_{0} \in \{1, \ldots, n\}$ such that $\w(e_{i_{0}}) \neq 0$. For $j\ne i_0$, $e_{i_0}e_j=0$ leads to $\w(e_{i_0})\w(e_j)=0$, so $\w(e_j)=0$. Then $0=e_{i_0}e_j=\frac12 \w(e_{i_0})e_j$ gives $e_j=0$ impossible. Such a basis $B_2$ does not exist.  \hfill\qed

\subsection{Results about nil evolution algebras}

For an element $a \in \E,$ we define principal powers as follows $a^1=a$ and $a^{k+1}=a^ka$ while that of $\E$ are defined by
\begin{eqnarray*}
\E^{1} = \E,\quad \E^{k + 1} = \E^{k}\E \quad (k\geq 1).
\end{eqnarray*}

\begin{defi}
We will say that algebra $\E$ is:

$i)$ \emph{right nilpotent} if there is a nonzero integer $n$ such that $\E^{n} = 0$ and the minimal such number is called the \textit{index} of right nilpotency~;

$ii)$ \emph{nil} if there is a nonzero integer $n(a)$ such that $a^{n(a)} = 0,$ for all $a \in \E$ and the minimal such number is called the \textit{nil-index} of $\E$.
\end{defi}

\begin{theo}[\cite{Casas2014}, Theorem 2.7] The following statements are equivalent:

$i)$ there is a natural basis in which the matrix of structural constants is of a form

\[\left(
\begin{array}{ccccc}
0 & a_{12} & a_{13} & \cdots & a_{1n} \\
0 & 0 & a_{23} & \cdots & a_{2n} \\
\vdots & \vdots &  \ddots&  & \vdots \\
0 & 0 & \cdots & \ddots& a_{n-1,n} \\
0 & 0 & 0 & \cdots & 0 \\
\end{array}
\right);\]

$ii)$ $\E$ is a right nilpotent algebra ;

$iii)$ $\E$ is a nilalgebra.
\end{theo}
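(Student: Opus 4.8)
The plan is to prove the three implications $(i)\Rightarrow(ii)\Rightarrow(iii)\Rightarrow(i)$. The first two are routine. For $(i)\Rightarrow(ii)$, working in a natural basis $\{e_1,\dots,e_n\}$ whose structure matrix is strictly upper triangular, I would show by induction on $k$ that $\E^{k}$ is contained in the subspace spanned by $e_k,\dots,e_n$: an element of $\E^{k+1}=\E^{k}\E$ is a combination of products $\big(\sum_{i\ge k}\beta_i e_i\big)e_j$, and since $e_ie_j=0$ for $i\ne j$ such a product is $\beta_j e_j^2$ with $j\ge k$, while $e_j^2$ lies in the span of $e_{j+1},\dots,e_n$ by strict upper triangularity; hence $\E^{n+1}=0$. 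For $(ii)\Rightarrow(iii)$, a trivial induction gives $a^{k}\in\E^{k}$ for every $a\in\E$ (as $a^{1}=a\in\E^{1}$ and $a^{k+1}=a^{k}a\in\E^{k}\E=\E^{k+1}$), so $\E^{n}=0$ forces $a^{n}=0$ for all $a$.

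The substantial implication is $(iii)\Rightarrow(i)$, which I would prove in contrapositive form. Fix any natural basis $\{e_1,\dots,e_n\}$ with structure matrix $A=(a_{ij})$ and form the directed graph $G$ on $\{1,\dots,n\}$ with an arrow $i\to j$ whenever $a_{ij}\ne 0$. If $G$ were acyclic it would admit a topological ordering of its vertices; reindexing the natural basis by the corresponding permutation (which is again a natural basis) turns $A$ into a strictly upper triangular matrix, giving $(i)$. So assume $G$ has a directed cycle and choose one with the fewest vertices, say on the distinct set $C=\{i_1,\dots,i_m\}$ with arrows $i_1\to i_2\to\cdots\to i_m\to i_1$ (indices read mod $m$), and put $c_p:=a_{i_pi_{p+1}}\ne 0$. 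Minimality forbids chords: if $a_{i_pi_q}\ne 0$ for some $q\not\equiv p+1\pmod m$, then the arrow $i_p\to i_q$ together with the stretch of the cycle from $i_q$ back to $i_p$ is a strictly shorter cycle. Consequently, among the rows and columns indexed by $C$ the only nonzero entries of $A$ are the $c_p$, i.e.\ for each $p$ one has $e_{i_p}^{2}=c_p e_{i_{p+1}}+w_p$ with $w_p$ in the subspace $W$ spanned by $\{e_\ell:\ell\notin C\}$.

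To conclude I would exhibit one non-nil element. Let $V$ be the span of $e_{i_1},\dots,e_{i_m}$, let $P\colon\E=V\oplus W\to V$ be the projection, and set $x=e_{i_1}+\cdots+e_{i_m}\in V$. Writing $x^{k}=v_k+w_k$ with $v_k=P(x^{k})\in V$ and $w_k\in W$, one uses $e_\ell e_{i_p}=0$ for $\ell\notin C$ (distinct natural basis vectors multiply to zero) to get $w_kx=0$, hence $x^{k+1}=v_kx=\sum_p(v_k)_p e_{i_p}^{2}$; projecting along $W$ gives the recursion $v_{k+1}=\sum_p (v_k)_p\,c_p\,e_{i_{p+1}}$, so the coefficient sequence of $v_{k+1}$ is a cyclic shift of that of $v_k$, rescaled componentwise by the nonzero scalars $c_p$. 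Since $v_1=x\ne 0$, this keeps every $v_k$ nonzero, hence $x^{k}\ne 0$ for all $k$, so $x$ is not nil and $\E$ is not a nilalgebra. This proves $\lnot(i)\Rightarrow\lnot(iii)$ and closes the cycle of implications.

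The main obstacle is exactly the passage $(iii)\Rightarrow(i)$: one has to translate the purely algebraic nil hypothesis into the combinatorial statement that $G$ is acyclic, which requires isolating a chordless cycle; and then one must avoid the natural but misleading attempt to build an \emph{idempotent} inside the cyclic block — that would demand suitable roots in the ground field — and instead argue, as above, only that the principal powers of $x$ never vanish, an argument valid over an arbitrary field $F$. (The normalization $a_{ii}=0$ for nil evolution algebras recalled in the introduction is consistent with, but not needed for, this argument, since self-loops are themselves length-one cycles.)
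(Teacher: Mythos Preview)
The paper does not prove this theorem; it is quoted as Theorem~2.7 of \cite{Casas2014} and invoked as a known result, so there is no proof in the present paper to compare your attempt against.

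Your argument is nonetheless correct. The implications $(i)\Rightarrow(ii)\Rightarrow(iii)$ are routine and handled properly. For $(iii)\Rightarrow(i)$ your graph-theoretic contrapositive is sound: acyclicity of the digraph on $\{1,\dots,n\}$ with an arrow $i\to j$ whenever $a_{ij}\ne 0$ is equivalent, after a permutation of the natural basis, to strict upper triangularity; and on a minimal (hence chordless) cycle $C=\{i_1,\dots,i_m\}$ one indeed has $e_{i_p}^{2}\equiv c_p\,e_{i_{p+1}}\pmod W$ with $c_p\ne 0$, so the recursion $(v_{k+1})_{p+1}=c_p(v_k)_p$ keeps every coordinate of $v_k=P(x^k)$ nonzero for $x=\sum_{p}e_{i_p}$, whence $x$ is not nilpotent. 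Your decision to track nonvanishing of the projection rather than to manufacture an idempotent in the cyclic block is exactly what keeps the argument valid over an arbitrary field $F$; this is the same idea underlying the original proof in \cite{Casas2014}.
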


We define the following subspace of $\E$:

$\bullet$ The \textit{annihilator} $\ann^{1}(\E) = \ann(\E) = \{x \in \E : x\E = 0\}$ ;

$\bullet$ $\ann^{i}(\E)$ is defined by $\ann^{i}(\E)/\ann^{i-1}(\E) = \ann\big(\E/\ann^{i-1}(\E)\big).$

$\bullet$ $\U_{i}\oplus \U_{1} = \{x \in \ann^{i}(\E) \mid x \ann^{i-1}(\E) = 0\}.$

In (\cite[Lemme 2.7]{Elduque2015}), the authors show that $\ann(\E) = span\{e_{i} \in B \mid e_{i}^{2} = 0\}.$

In (\cite{Elduque2016}), the authors show that $\ann^{i}(\E) = span\{e \in B \mid e^{2} \in \ann^{i-1}(\E)\}$ and that the basis $B=B_1\cup\cdots \cup B_r$ where $B_i=\{e\in B\mid e^2 \in \ann^{i-1}(\E), e\not\in \ann^{i-1}(\E)\}$. Then, for $\U_i:=span\{B_i\}$, $i=1,\ldots,r$, we have $\U_1\oplus\cdots\oplus\U_i=\ann^i(\E)$ ($i=1,\ldots,r$). They prove then that $\U_{i}\oplus \U_{1}$ is an invariant of nil evolution algebras.

\textit{The type} of a nil evolution algebra $\E$ is the sequence $[n_{1}, n_{2}, \ldots, n_{r}]$ where $r$ and $n_{i}$ are integers defined by $\ann^{r}(\E) = \E$ ; $n_{i} = \dim_{K}(\ann^{i}(\E)) - \dim_{K}(\ann^{i-1}(\E))$ and $n_1+\cdots+n_i=dim_K(\ann^i(\E))$ for all $i \in \{1, \ldots, r\}.$

\section{Power associativity}

\subsection{Some definitions}

\begin{defi} Algebra $\E$ is called:

$i)$ \textit{associative} if for all $x,$ $y,$ $z \in \E,$ $(x, y, z) = 0$ where $(x, y, z) = (xy)z - x(yz)$ is the associator of the elements $x,$ $y,$ $z$ of $\E$ ;

$ii)$ \textit{Jordan} if it is commutative and $(x^{2}, y, x) = 0$, for all $x, y \in \E$ ;

$iii)$ \textit{power-associative} if the subalgebra generated by $x$ is associative. In the other words, for all  $x \in \E$, $x^{i}x^{j} = x^{i + j}$ for all  integers $i, j\geq 1.$
\end{defi}

\medskip\begin{defi}
Algebra $\E$ is said to be \textit{third power-associative} if for all $x \in \E,$ $x^{2}x = xx^{2}.$
\end{defi}

\begin{rem}
Since any commutative algebra is third power-associative, we deduce that any evolution algebra is third power-associative.
\end{rem}

\begin{defi}
Algebra $\E$ is said to be \textit{fourth power-associative} if $x^{2}x^{2} = x^{4}$ for all $x \in E.$
\end{defi}

\begin{theo}[\cite{Albert1}] Let $F$ be a field of characteristic $\neq 2, 3, 5$. Algebra $\E$ is power-associative if and only if $x^{2}x^{2} = x^{4}$, for all $x \in \E.$
\end{theo}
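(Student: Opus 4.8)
One direction is trivial: in a power-associative algebra every four-fold product of $x$ equals $x^4$, whence $x^2x^2=x^4$. For the converse I would reproduce Albert's linearization argument; I describe it for commutative $\E$, which is the only case needed here (for arbitrary $\E$ one argues as Albert does, additionally assuming the third power-associativity $x^2x=xx^2$). By the very definition of power-associativity it suffices to fix $x\in\E$ and prove $x^ix^j=x^{i+j}$ for all integers $i,j\ge 1$, where $x^1=x$ and $x^{k+1}=x^kx$. I would induct on $N=i+j$. Commutativity handles the subcases $i=1$ or $j=1$ in every degree (they collapse to the definition of $x^N$), and the cases $N\le 4$ reduce to these together with the standing hypothesis $x^2x^2=x^4$.

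For the inductive step the essential tool is full linearization of the degree-four identity $x^2x^2-(x^2x)x=0$: replacing $x$ by a generic linear combination and isolating the component of a prescribed multidegree produces a family of multilinear identities. This is legitimate precisely because the characteristic of $F$ avoids the primes that occur as multinomial coefficients in degrees up to five, which is the role of the hypothesis $\mathrm{char}\,F\ne 2,3,5$. Specializing these multilinear identities and combining them with the relations already established in lower degrees is what permits the passage from degree $N-1$ to degree $N$.

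The step I expect to be the genuine obstacle is degree five, i.e. proving $x^2x^3=x^5$ (equivalently, that all five-fold products of $x$ agree). This is exactly where $\mathrm{char}\,F\ne 5$ is used: after linearizing $x^2x^2=x^4$ and taking a suitable combination of the resulting identities, the desired relation appears multiplied by a nonzero integer multiple of $5$, which one cancels. Once all products in degrees $\le 5$ are controlled, the induction for $N\ge 6$ proceeds with no further restriction on the characteristic, each product $x^ix^j$ with $i+j=N$ being reduced to $x^N=x^{N-1}x$ by means of the already-proved lower-degree relations and the degree-four and degree-five linearized identities. What remains is a long but elementary bookkeeping with multilinear identities, carried out in \cite{Albert1}.
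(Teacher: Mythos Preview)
The paper does not supply its own proof of this theorem; it is quoted from Albert \cite{Albert1} and used as a black box. There is therefore nothing in the paper to compare your proposal against.

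That said, your sketch is faithful to Albert's original argument: the forward implication is trivial, and the converse is obtained by linearizing the fourth-power identity, handling degree five as the crucial step where the restriction $\mathrm{char}\,F\ne 5$ enters, and then running an induction on the total degree. You are also right that the commutative case is all that the present paper needs, since evolution algebras are commutative (so the third-power condition $x^2x=xx^2$ is automatic). Your proposal is a correct outline, but it remains an outline: the substantive computations---the explicit linearizations and the bookkeeping that reduces each $x^ix^j$ with $i+j=N$ to $x^N$---are deferred to \cite{Albert1}, exactly as the paper itself does.
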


\subsection{Characterization of associative algebras}
If $\E$ is a $n$-dimensional algebra with basis $\{e_{i}$ ; $1 \leq i \leq n\},$ then $\E$ is associative if and only if $(e_{i}e_{j})e_{k} = e_{i}(e_{j}e_{k})$ for all $1 \leq i, j , k  \leq n$ (\cite[Proposition 1]{Jacobson}). We then deduce:

\begin{lem}\label{asso} An evolution algebra with natural basis $B = \{e_{i}$ ; $1 \leq i \leq n\}$ is associative if and only if $e_{i}^{2}e_{j} = e_{i}(e_{i}e_{j}) = 0$ for all $1 \leq i \neq j \leq n.$
\end{lem}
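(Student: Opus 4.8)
The plan is to use the cited fact (\cite[Proposition~1]{Jacobson}) that an algebra with a fixed basis is associative if and only if associativity holds on all basis triples $(e_i,e_j,e_k)$, and then to exploit the very rigid multiplication of an evolution algebra, where $e_ie_j=0$ whenever $i\neq j$ and $e_i^2=\sum_k a_{ik}e_k$. Concretely, I would run through the cases for the associator $(e_i,e_j,e_k)=(e_ie_j)e_k-e_i(e_je_k)$ according to how many of the indices coincide, show that all cases are automatically zero except possibly when exactly two of them are equal, and then read off that the surviving condition is precisely $e_i^2e_j=0$ for $i\neq j$, which (since $e_i^2e_j=(e_ie_i)e_j$ and, using commutativity, also equals $e_i(e_ie_j)$ because $e_ie_j=0$ for $i\neq j$) is the same as the statement in the lemma.

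The key steps, in order, are: (1) Recall $(e_i,e_j,e_k)=(e_ie_j)e_k-e_i(e_je_k)$ and note that if $i\neq j$ then $e_ie_j=0$, so the first term vanishes; similarly if $j\neq k$ then $e_je_k=0$, so the second term vanishes. Hence the associator is automatically $0$ as soon as $i\neq j$ \emph{and} $j\neq k$. (2) The case $i=j=k$ gives $(e_i,e_i,e_i)=e_i^2e_i-e_ie_i^2=0$ by commutativity (indeed every evolution algebra is third power-associative, as noted in the Remark). (3) The remaining cases are $i=j\neq k$, giving $(e_i,e_i,e_k)=e_i^2e_k-e_i(e_ie_k)=e_i^2e_k-0=e_i^2e_k$, and $i\neq j=k$, giving $(e_i,e_j,e_j)=(e_ie_j)e_j-e_ie_j^2=0-e_ie_j^2=-e_je_i^2$ after relabelling, i.e. the same family of conditions $e_i^2e_j=0$ for $i\neq j$. (4) Conclude via Jacobson's criterion that $\E$ is associative iff $e_i^2e_j=0$ for all $i\neq j$; and since $e_ie_j=0$ for $i\neq j$ we also have $e_i(e_ie_j)=e_i\cdot 0=0$, so $e_i^2e_j=e_i(e_ie_j)$ trivially, which matches the displayed equalities in the statement.

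I do not expect a genuine obstacle here: the proof is essentially a finite bookkeeping over index patterns made trivial by the sparsity of the evolution multiplication, together with commutativity to handle the diagonal case. The only point that warrants a word of care is making sure the two ``two-indices-equal'' subcases $i=j\neq k$ and $i\neq j=k$ really collapse to the single condition $e_i^2e_j=0$ ($i\neq j$) rather than to two separate ones — this follows at once because commutativity makes $e_je_i^2=e_i^2e_j$, so the vanishing of one family is equivalent to the vanishing of the other. Thus the whole lemma reduces to the short case analysis above.
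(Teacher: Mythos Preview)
Your proposal is correct and follows exactly the approach the paper intends: the paper presents Lemma~\ref{asso} as an immediate deduction from Jacobson's criterion and does not spell out a proof, while you supply the short case analysis over the index patterns $(i,j,k)$ that makes this deduction explicit. The only minor stylistic point is in step~(3), where ``$-e_je_i^2$ after relabelling'' would read more cleanly as ``$-e_ie_j^2=-e_j^2e_i$, which after swapping the names $i\leftrightarrow j$ gives the same condition $e_i^2e_j=0$''; the mathematics is fine.
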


\begin{theo}[\cite{Myung}, Theorem 1]\label{Myung}
A commutative power-associative nilalgebra $A$ of nilindex $3$
and of dimension $4$ over a field $F$ of characteristic $\ne 2$ is associative, and $A^3 = 0$.
\end{theo}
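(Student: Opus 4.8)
\medskip
\noindent\emph{Plan of proof.}
The idea is to turn the hypotheses into multilinear identities and then to finish by a dimension count on the chain of derived powers $A\supseteq A^{2}\supseteq A^{3}\supseteq\cdots$, using $\dim_{F}A=4$.

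First I would record the consequences of the hypotheses. Every $x\in A$ satisfies $x^{3}=0$, hence $x^{4}=x\cdot x^{3}=0$ and, as $F[x]$ is associative, $x^{2}x^{2}=x^{4}=0$. Polarizing $x^{2}x=0$ --- replace $x$ by $x+ty+sz$ and collect multihomogeneous parts, which costs only divisions by $2$ and is legitimate since $\operatorname{char}F\neq 2$ --- yields $x^{2}y+2(xy)x=0$ and then the fully linearized identity $(xy)z+(yz)x+(zx)y=0$, which I denote $(\star)$. Feeding the triple $(ab,c,d)$ into $(\star)$ gives $(cd)(ab)=-\big((ab)c\big)d-\big((ab)d\big)c$, so $A^{2}A^{2}\subseteq A^{4}$; feeding in $(x,x,y)$ gives $x^{2}y=-2(xy)x$.

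Next I would reduce to proving $A^{3}=0$: once this holds, $(xy)z$ and $x(yz)=(yz)x$ both lie in $A^{2}A=A^{3}=0$, so every associator vanishes and $A$ is associative. To reach $A^{3}$ I would invoke the classical fact that a commutative power-associative nilalgebra of dimension $\le 4$ over a field of characteristic $\neq 2$ is nilpotent (this is false in dimension $5$); hence the chain $A\supsetneq A^{2}\supsetneq A^{3}\supsetneq\cdots$ strictly decreases until it reaches $0$. Assume $A^{3}\neq 0$. As $\dim A=4$ and the chain is strict, necessarily $\dim A^{2}\in\{2,3\}$. If $\dim A^{2}=3$, write $A=Fu\oplus A^{2}$; expanding $(\alpha u+w)^{2}$ for $w\in A^{2}$ and using $uw\in AA^{2}=A^{3}$, $w^{2}\in A^{2}A^{2}\subseteq A^{3}$, one gets $A^{2}=Fu^{2}+A^{3}$, hence $A^{3}=A^{2}A=A^{2}u+A^{2}A^{2}\subseteq A^{4}$ (since $A^{2}u=Fu^{3}+A^{3}u\subseteq A^{4}$ and $A^{2}A^{2}\subseteq A^{4}$), contradicting strictness. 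If $\dim A^{2}=2$, then $\dim A^{3}=1$, so $A^{4}=0$, $A^{2}A^{2}=0$, and $A^{3}=Fc$ with $c\in\ann(A)$; here $\dim A/A^{2}=2$ and, expanding squares, the classes of $A^{2}$ in $A^{2}/A^{3}$ are spanned by $\overline{u_{1}^{2}},\overline{u_{2}^{2}},\overline{u_{1}u_{2}}$ for a basis $\overline{u_{1}},\overline{u_{2}}$ of $A/A^{2}$, so this $1$-dimensional quotient equals $F\,\overline{u^{2}}$ for a suitable $u$ (a combination of $u_{1},u_{2}$); thus $A^{2}=Fu^{2}\oplus Fc$, and, completing $u$ to $u,v$, one finds $A^{3}=A^{2}A=A^{2}u+A^{2}v+A^{2}A^{2}=Fu^{2}v$ (using $u^{3}=0$, $c\in\ann(A)$ and $A^{2}A^{2}=0$); but $uv\in A^{2}=Fu^{2}+Fc$ forces $(uv)u\in Fu^{3}+Fcu=0$, whence $u^{2}v=-2(uv)u=0$ by $(\star)$, contradicting $Fu^{2}v\neq 0$. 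Both cases being impossible, $A^{3}=0$, and $A$ is associative.

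The step I expect to be delicate is not any single computation but the bookkeeping in the case analysis above: one must track $A^{2},A^{3},A^{4}$ and $A^{2}A^{2}$ simultaneously, and the decisive leverage is the inclusion $A^{2}A^{2}\subseteq A^{4}$, which pushes the ``square part'' of $A^{2}$ modulo $A^{3}$ down to a multiple of a single $u^{2}$. The only external input is the nilpotency of $A$ in dimension $\le 4$; a self-contained treatment would instead dispose of the cases $\dim A^{2}\le 1$ by hand --- e.g. if $A^{2}=Fc$ then $0=a^{4}=(a^{2})^{2}$ gives $c^{2}=0$, and writing $x^{2}=f(x)c$, $cx=g(x)c$, the identity $0=x^{3}=f(x)g(x)c$ forces $g\equiv 0$, so $c\in\ann(A)$ and $A^{3}=0$ --- and would also argue directly that $A\neq A^{2}$ and that no two consecutive nonzero terms of the chain coincide.
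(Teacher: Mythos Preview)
The paper does not prove this theorem: it is quoted verbatim from Gerstenhaber--Myung \cite{Myung} as motivation for the evolution-algebra generalization that follows. There is therefore no ``paper's own proof'' to compare your proposal against; I can only assess your argument on its merits.

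Your argument is sound. The polarization of $x^{3}=0$ yielding $x^{2}y+2(xy)x=0$ and then $(\star)$ is standard in characteristic $\neq 2$; the key inclusion $A^{2}A^{2}\subseteq A^{4}$ follows from $(\star)$ exactly as you say; and both branches of the case analysis on $\dim A^{2}\in\{2,3\}$ are correct once nilpotency is granted. Two remarks. First, the external input you invoke---that a commutative power-associative nilalgebra of dimension $\le 4$ is nilpotent---is precisely the kind of statement Gerstenhaber--Myung are establishing (their Theorems~1 and~2 together cover nilindex $3$ and $4$, hence all of dimension $4$), so using it to prove their Theorem~1 risks circularity; you are right to flag this and sketch a workaround. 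Second, in that workaround the step ``$f(x)g(x)=0$ forces $g\equiv 0$'' tacitly assumes an infinite ground field. A field-independent fix: substitute $y=c$ into $x^{2}y+2(xy)x=0$ to get $f(x)c^{2}+2g(x)\,cx=2g(x)^{2}c=0$, whence $g\equiv 0$. The genuinely delicate point in a fully self-contained proof is ruling out $A=A^{2}$; one route is to observe from $(\star)$ that $L_{x^{2}}=-2L_{x}^{2}$, hence $L_{x}^{4}=-\tfrac{1}{8}L_{x^{4}}=0$, so every $L_{x}$ is nilpotent, and then run an Engel-type argument on the multiplication algebra---but that is essentially proving nilpotency, which is what the original reference does.
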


For the evolution algebra, this theorem has the following generalization:

\begin{theo} Let $\E$ be a finite-dimensional evolution algebra. Then $\E$ is an associative nilalgebra if and only if $x^{3} = 0$, for all $x \in \E$. In this case $\E^{3} = 0$.
\end{theo}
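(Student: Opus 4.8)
The plan is to prove the equivalence in both directions, using two facts already established: the structure theorem for nil evolution algebras (\cite{Casas2014}, Theorem~2.7), which produces a natural basis $B=\{e_1,\dots,e_n\}$ whose structural matrix $(a_{ij})$ is strictly upper triangular, so that $e_i^2=\sum_{k>i}a_{ik}e_k$ and $a_{ii}=0$; and Lemma~\ref{asso}, which says an evolution algebra is associative exactly when $e_i^2e_j=0$ for all $i\ne j$ in a natural basis. It is also worth recording at the outset that $x^3=x^2x\in\E^2\E=\E^3$ holds in any algebra, so the implication $\E^3=0\Rightarrow x^3=0$ will be automatic.

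For the forward implication, assume $\E$ is an associative nilalgebra and fix a strictly upper triangular natural basis. Then $e_i^2e_i=\sum_{k>i}a_{ik}e_ke_i=0$ because $e_ke_i=0$ whenever $k\ne i$, while Lemma~\ref{asso} gives $e_i^2e_j=0$ for $i\ne j$. Since $\E^2=\mathrm{span}\{e_i^2\}$ and $\E^3=\E^2\E$ is spanned by the products $e_i^2e_j$, this forces $\E^3=0$, hence $x^3=0$ for every $x\in\E$.

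For the converse, assume $x^3=0$ for all $x\in\E$. This makes $\E$ a nilalgebra (of nil-index at most $3$), so I may again fix a strictly upper triangular natural basis, and it only remains to verify the criterion of Lemma~\ref{asso}. I would evaluate the hypothesis on $x=e_i+e_j$ with $i<j$: since $e_ie_j=0$ we have $x^2=e_i^2+e_j^2$, and therefore
\[
0=x^3=x^2x=e_i^3+e_j^3+e_i^2e_j+e_j^2e_i .
\]
The terms $e_i^3$ and $e_j^3$ vanish by hypothesis, and $e_j^2e_i=a_{ji}e_i^2=0$ because $a_{ji}$ is a sub-diagonal entry of the normal form; hence $e_i^2e_j=a_{ij}e_j^2=0$. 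For $i>j$ one has $e_i^2e_j=a_{ij}e_j^2=0$ as well, since now $a_{ij}=0$. Thus $e_i^2e_j=0$ for all $i\ne j$, so $\E$ is associative by Lemma~\ref{asso}, and $\E^3=0$ follows exactly as in the forward part.

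The delicate point is this last step, and it is subtle rather than computational: applying $x^3=0$ to $x=e_i+e_j$ only yields the relation $a_{ij}e_j^2+a_{ji}e_i^2=0$, and since $e_i^2$ and $e_j^2$ need not be linearly independent this does not split into its two summands on its own. What makes the argument work is precisely the strictly upper triangular normal form, which annihilates the term $a_{ji}e_i^2$ and isolates $a_{ij}e_j^2=0$. I should also note that the hypothesis is literally $x^3=0$, not fourth-power associativity, so neither Albert's criterion (\cite{Albert1}) nor any restriction on the characteristic of $F$ is needed here.
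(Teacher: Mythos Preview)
Your proof is correct. The forward direction is essentially the paper's argument. For the converse, however, the paper takes a different and slightly shorter path: it invokes the partial linearization $2(xy)x + x^{2}y = 0$ of the identity $x^{3}=0$ and applies it with $x=e_{i}$, $y=e_{j}$ ($i\ne j$); since $e_{i}e_{j}=0$ in the natural basis, the first term vanishes and $e_{i}^{2}e_{j}=0$ falls out immediately, with no appeal to the triangular normal form. Your route---evaluating at $x=e_{i}+e_{j}$, obtaining only the symmetric relation $a_{ij}e_{j}^{2}+a_{ji}e_{i}^{2}=0$, and then using the strictly upper triangular basis from \cite{Casas2014} to kill the $a_{ji}$ term---is equally valid and, as you observe, sidesteps the linearization step (which over very small fields would need a scalar-extension argument the paper does not spell out). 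The trade-off is that your converse leans on the full structural theorem for nil evolution algebras, whereas the paper's converse needs nothing beyond Lemma~\ref{asso} once the linearized identity is accepted.
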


\begin{proof}

Let $B = \{e_{i}$ ; $1 \leq i \leq n\}$ be a natural basis of $\E.$

Let's suppose that $\E$ is an associative nilalgebra. Then for all integer $i$ we have $e_i^3=0$. Let's set $x=\sum_{i=1}^nx_ie_i$. We have  $x^3=\sum_{i,j=1}^n x_i^2x_je_i^2e_j=0$ because $e_i^2e_j=0$ since $\E$ is nil and associative.

Conversely the partial linearization of the identity $x^3=0$ give
\begin{eqnarray}\label{Eqb}
2(xy)x + x^{2}y = 0.
\end{eqnarray}
For $i$ distinct from $j$ we have $e_i^2e_j=0$ and $\E$ is associative by Lemma~\ref{asso}.
We have
$\E^{3} = \big<e_{i}^{2}e_{j}$ ; $1 \leq i, j \leq n \big> = 0$.
\end{proof}

\subsection{Characterization of power-associative algebras}
In the following a field $F$ is of characteristic $\neq 2$.

$\E$ is a $n$-dimensional evolution algebra over a field $F$ and with a natural basis $B = \{e_{i}$ ; $1 \leq i \leq n\}$ whose multiplication table is given by \eqref{Eq1}.

\begin{lem}\label{paa}
Algebra $\E$ is a fourth power-associative if and only if the following conditions are satisfied:

$1)$ $e_{i}^{4} = e_{i}^{2}e_{i}^{2}$ for all $1 \leq i \leq n$ ;

$2)$ $2e_{i}^{2}e_{j}^{2} = (e_{i}^{2}e_{j})e_{j} + (e_{j}^{2}e_{i})e_{i}$ for all $1 \leq i < j \leq n$ ;

$3)$ $e_{i}^{3}e_{j} + (e_{i}^{2}e_{j})e_{i} = 0$ for all $ 1 \leq i \neq j \leq n$ ;

$4)$ $(e_{i}^{2}e_{j})e_{k} + (e_{i}^{2}e_{k})e_{j} = 0$ for all $1 \leq i \leq n$ and $1 \leq j < k \leq n$ with $i \neq j$ and $i \neq k.$
\end{lem}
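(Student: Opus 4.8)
The plan is to use Theorem~3.1.14 (Albert): since $F$ has characteristic $\neq 2$ (and we may as well assume the full hypothesis $\neq 2,3,5$, or simply work with the fourth-power-associativity identity directly), $\E$ is fourth power-associative exactly when $x^2 x^2 = x^4$ holds for every $x \in \E$. So I would start by writing a generic element $x = \sum_{i=1}^n x_i e_i$ and expanding both $x^2 x^2$ and $x^4 = (x^2 x) x$ as polynomials in the scalars $x_1, \dots, x_n$, using the multiplication table~\eqref{Eq1}. Because $\E$ is commutative and the only nonzero products of basis vectors are the $e_i^2$, many terms collapse; the key observation is that $x^2 x^2 - x^4$ is a homogeneous degree-$4$ polynomial in the $x_i$ whose vanishing for all scalar values is equivalent (here I use $|F|$ large enough, or pass to an infinite extension, or invoke that the identity is a polynomial identity in the free sense) to the vanishing of each of its multihomogeneous components.

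Next I would organize the monomials $x_i^a x_j^b x_k^c \cdots$ by their support and multidegree. The relevant types are: (a) pure fourth powers $x_i^4$, giving condition $1)$; (b) monomials $x_i^2 x_j^2$ with $i \neq j$, giving condition $2)$; (c) monomials $x_i^3 x_j$ with $i \neq j$, giving condition $3)$; (d) monomials $x_i^2 x_j x_k$ with $i,j,k$ distinct, giving condition $4)$. Any monomial involving four or more distinct indices, or of the form $x_i x_j x_k x_\ell$ with all exponents $1$, contributes zero on both sides, since forming a product always squares at least one basis index. So the only nontrivial constraints come from these four monomial types, and reading off the coefficient of each type in $x^2 x^2 - x^4$ produces exactly the four displayed identities (after dividing by the appropriate multinomial coefficients, which are units since $\operatorname{char} F \neq 2, 3$).

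In carrying out (b)–(d) I would expand $x^2 = \sum_i x_i^2 e_i^2$, then $x^2 x^2 = \sum_{i,j} x_i^2 x_j^2 (e_i^2)(e_j^2)$, and $x^4 = (x^2 x)x$ where $x^2 x = \sum_{i,j} x_i^2 x_j (e_i^2 e_j)$ and then multiply by $x = \sum_k x_k e_k$ to get $\sum_{i,j,k} x_i^2 x_j x_k (e_i^2 e_j) e_k$. Collecting the coefficient of $x_i^2 x_j^2$ (for $i<j$) in $x^2 x^2$ gives $2 e_i^2 e_j^2$, while in $x^4$ it gathers the terms with $\{j,k\}$ contributing two factors, producing $(e_i^2 e_j)e_j + (e_j^2 e_i)e_i$ — hence condition $2)$. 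Similarly the coefficient of $x_i^3 x_j$ in $x^2 x^2$ is $0$ (one cannot write $i,i,i,j$ as a product of two squares), while in $x^4$ it is $e_i^3 e_j + (e_i^2 e_j) e_i$, giving condition $3)$; and the coefficient of $x_i^2 x_j x_k$ ($j<k$, both $\neq i$) is $0$ in $x^2 x^2$ and $(e_i^2 e_j)e_k + (e_i^2 e_k)e_j$ in $x^4$, giving condition $4)$. The converse direction is immediate: assuming $1)$–$4)$, reassembling the expansion shows $x^2 x^2 = x^4$ for every $x$.

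The main obstacle — really the only subtle point — is the bookkeeping of which monomials survive and with what coefficient, together with the standard caveat that passing from "this polynomial identity holds for all $x \in \E$" to "each multihomogeneous coefficient vanishes" requires the base field to be infinite (or at least large compared to the degree $4$); if $F$ is finite one argues instead via the formal partial linearization of $x^2 x^2 = x^4$, which is legitimate in characteristic $\neq 2, 3$. Once that is settled, matching coefficients is routine and the four conditions fall out exactly as stated.
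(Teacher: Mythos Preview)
Your proposal is correct and follows essentially the same route as the paper: the paper linearizes the identity $(x,x,x^2)=0$ and then evaluates at basis vectors for the forward direction, and performs the very expansion of $x^2x^2$ and $x^4=\sum_{i,j,k}x_i^2x_jx_k(e_i^2e_j)e_k$ that you describe for the converse. Your coefficient-comparison argument is just the same linearization recast in coordinates (and your remark about finite fields versus formal linearization is exactly the right way to handle that subtlety); one small slip is the mention of ``dividing by multinomial coefficients''---no division is actually needed, since the monomial coefficients you compute already match conditions $1)$--$4)$ on the nose.
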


\begin{proof}
Suppose that $\E$ is a fourth power-associative algebra and $x, y, z \in \E.$ We have
\begin{equation} \label{eq2}
(x, x, x^{2}) = 0.
\end{equation}
The partial linearization of (\ref{eq2}) gives:
\begin{equation} \label{eq3}
(y, x, x^{2}) + (x, y, x^{2}) + 2(x, x, xy) = 0
\end{equation}
\begin{equation} \label{eq4}
(y, z, x^{2}) + 2(y, x, xz) + (z, y, x^{2}) + 2(x, y, xz) + 2(z, x, xy) + 2(x, z, xy) + 2(x, x, zy) = 0
\end{equation}
Since the basis vectors $\{e_{i}$ ; $1 \leq i \leq n\}$ check identities (\ref{eq2}), (\ref{eq3}) and (\ref{eq4}), we have

$(\ref{eq2}) \Longrightarrow 1)$ taking $x = e_{i}.$

$(\ref{eq3}) \Longrightarrow 3)$ taking $x = e_{i}$ and $y = e_{j}$ with $i \neq j.$

$(\ref{eq4}) \Longrightarrow 2)$ taking $x = e_{i}$ and $y = z = e_{j}$ with $i \neq j.$

$(\ref{eq4}) \Longrightarrow 4)$ taking $x = e_{i}$, $y = e_{j}$ and $z = e_{k}$ ;  $i, j, k$ are pairwise distinct.


Conversely, suppose that conditions $1),$ $2),$ $3)$ and $4)$ are satisfied. Let $x = \sum_{i=1}^{n} x_{i}e_{i}$ be an element of $\E.$ We have the following equalities :
\begin{eqnarray*}
x^{2} & = & \sum_{i=1}^{n} x_{i}^{2}e_{i}^{2}, \\
x^{3} & = & \sum_{i, j =1}^{n} x_{i}^{2}x_{j}e_{i}^{2}e_{j} =  \sum_{i=1}^{n} x_{i}^{3}e_{i}^{3} + \sum_{i = 1}^{n}\sum\limits_{\substack{j = 1 \\ j \neq i}}^{n} x_{i}^{2}x_{j}e_{i}^{2}e_{j},\\
x^{2}x^{2} & = & \sum_{i, j = 1}^{n} x_{i}^{2}x_{j}^{2}e_{i}^{2}e_{j}^{2}=  \sum_{i = 1}^{n} x_{i}^{4}e_{i}^{2}e_{i}^{2}  + \sum_{i = 1}^{n}\sum\limits_{\substack{j = 1 \\ j \neq i}}^{n}x_{i}^{2}x_{j}^{2}e_{i}^{2}e_{j}^{2}\\
& = & \sum_{i = 1}^{n} x_{i}^{4}e_{i}^{2}e_{i}^{2}  + 2\sum_{1 \leq i < j \leq n}x_{i}^{2}x_{j}^{2}e_{i}^{2}e_{j}^{2},\\
x^{4} & = & \sum_{i, j =1}^{n} x_{i}^{3}x_{j}e_{i}^{3}e_{j} + \sum_{i = 1}^{n}\sum\limits_{\substack{j = 1 \\ j \neq i}}^{n}\sum_{k=1}^{n} x_{i}^{2}x_{j}x_{k}(e_{i}^{2}e_{j})e_{k}\\
& = &  \sum_{i = 1}^{n} x_{i}^{4}e_{i}^{4} + \sum_{i = 1}^{n}\sum\limits_{\substack{j = 1 \\ j \neq i}}^{n} x_{i}^{3}x_{j}\big(e_{i}^{3}e_{j} + (e_{i}^{2}e_{j})e_{i}\big) + \sum_{i=1}^{n}\sum\limits_{\substack{j = 1 \\ j \neq i}}^{n}x_{i}^{2}x_{j}^{2}(e_{i}^{2}e_{j})e_{j}{}\nonumber\\
& & {} + \sum_{i=1}^{n}\sum\limits_{\substack{j = 1 \\ j \neq i}}^{n}\sum\limits_{\substack{k = 1 \\ \substack{k \neq i \\ k\neq j}}}^{n} x_{i}^{2}x_{j}x_{k}(e_{i}^{2}e_{j})e_{k}.
\end{eqnarray*}
We have
\begin{eqnarray*} \sum_{i=1}^{n}\sum\limits_{\substack{j = 1 \\ j \neq i}}^{n} x_{i}^{2}x_{j}^{2}(e_{i}^{2}e_{j})e_{j} & = & \sum_{1 \leq i < j \leq n}x_{i}^{2}x_{j}^{2}\big((e_{i}^{2}e_{j})e_{j} + (e_{j}^{2}e_{i})e_{i}\big),
\end{eqnarray*}
and
\begin{eqnarray*}
\sum_{i=1}^{n}\sum\limits_{\substack{j = 1 \\ j \neq i}}^{n}\sum\limits_{\substack{k = 1 \\ \substack{k \neq i \\ k\neq j}}}^{n} x_{i}^{2}x_{j}x_{k}(e_{i}^{2}e_{j})e_{k} & = & \sum_{i=1}^{n}\sum\limits_{\substack{1 \leq j < k \leq n \\ \substack{j \neq i \\ k \neq i}}} x_{i}^{2}x_{j}x_{k}\big((e_{i}^{2}e_{j})e_{k} + (e_{i}^{2}e_{k})e_{j}\big) =  0.
\end{eqnarray*}
So
\begin{eqnarray*}x^{4} & = & \sum_{i = 1}^{n} x_{i}^{4}e_{i}^{4} + \sum_{1 \leq i < j \leq n}x_{i}^{2}x_{j}^{2}\big((e_{i}^{2}e_{j})e_{j} + (e_{j}^{2}e_{i})e_{i}\big) =  x^{2}x^{2}.
\end{eqnarray*}
\end{proof}

\begin{coro}\label{PAA} Let $\E$ be a nil evolution algebra. Then $\E$ is fourth power-associative algebra if and only if the following conditions are satisfied:

$1)$ $e_{i}^{2}e_{j}^{2} = 0$ for all $1 \leq i \leq j \leq n$ ;

$2)$ $(e_{i}^{2}e_{j})e_{k} = 0$ for all $1 \leq i, j, k \leq n.$

\end{coro}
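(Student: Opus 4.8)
The plan is to deduce both implications from Lemma~\ref{paa}, after first passing to a convenient natural basis. Observe that conditions $1)$ and $2)$ in the statement are intrinsic: by commutativity $1)$ amounts to $\E^{2}\E^{2}=0$, and $2)$ amounts to $(\E^{2}\E)\E=0$, neither of which depends on the choice of natural basis. Since $\E$ is nil, I would therefore choose, by \cite[Theorem~2.7]{Casas2014}, a natural basis $B$ in which the structure matrix is strictly upper triangular, i.e. $a_{ik}=0$ for $k\le i$. In such a basis $e_{i}^{2}=\sum_{k>i}a_{ik}e_{k}$, and multiplying on the right by a basis vector (recall $e_{k}e_{j}=0$ for $k\ne j$) yields the reductions
\begin{eqnarray*}
e_{i}^{2}e_{j}=a_{ij}e_{j}^{2}\ \ (j>i),\qquad e_{i}^{2}e_{j}=0\ \ (j\le i),
\end{eqnarray*}
and, as a special case, $e_{i}^{3}=0$ for every $i$; also $e_{j}^{2}e_{i}=0$ whenever $i<j$. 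These identities do the bulk of the work.

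For ``$\Leftarrow$'' I would simply substitute $1)$ and $2)$ into the four conditions of Lemma~\ref{paa}: $e_{i}^{4}=(e_{i}^{2}e_{i})e_{i}=0$ by $2)$ and $e_{i}^{2}e_{i}^{2}=0$ by $1)$, so condition $1)$ of the lemma holds; condition $2)$ of the lemma has both sides $0$ by $1)$ and $2)$; in condition $3)$ of the lemma, $e_{i}^{3}e_{j}=(e_{i}^{2}e_{i})e_{j}=0$ and $(e_{i}^{2}e_{j})e_{i}=0$ by $2)$; condition $4)$ of the lemma has both terms $0$ by $2)$. Hence $\E$ is fourth power-associative. (This direction uses neither nilpotency nor $\mathrm{char}\,F\ne2$.)

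For ``$\Rightarrow$'' I would assume $\E$ fourth power-associative, so the four conditions of Lemma~\ref{paa} hold, and argue in the triangular basis. For $1)$: if $i=j$, condition $1)$ of the lemma gives $e_{i}^{2}e_{i}^{2}=e_{i}^{4}=e_{i}^{3}e_{i}=0$; if $i<j$, condition $2)$ of the lemma reads $2e_{i}^{2}e_{j}^{2}=(e_{i}^{2}e_{j})e_{j}+(e_{j}^{2}e_{i})e_{i}$, where $(e_{i}^{2}e_{j})e_{j}=a_{ij}e_{j}^{3}=0$ and $(e_{j}^{2}e_{i})e_{i}=0$ since $e_{j}^{2}e_{i}=0$, so $2e_{i}^{2}e_{j}^{2}=0$ and $\mathrm{char}\,F\ne2$ yields $e_{i}^{2}e_{j}^{2}=0$. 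For $2)$ I would do a case analysis on $(e_{i}^{2}e_{j})e_{k}$: it vanishes at once if $j\le i$ (then $e_{i}^{2}e_{j}=0$); if $j>i$ then $(e_{i}^{2}e_{j})e_{k}=a_{ij}(e_{j}^{2}e_{k})$, which vanishes when $k\le j$; the only remaining case is $i<j<k$, where $(e_{i}^{2}e_{j})e_{k}=a_{ij}a_{jk}e_{k}^{2}$ and condition $4)$ of the lemma gives $(e_{i}^{2}e_{j})e_{k}=-(e_{i}^{2}e_{k})e_{j}=-a_{ik}(e_{k}^{2}e_{j})=0$ because $j<k$. Thus condition $2)$ holds.

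Everything here is routine bookkeeping once the reductions $e_{i}^{2}e_{j}=a_{ij}e_{j}^{2}$ and $e_{i}^{3}=0$ are in place; the single point where fourth power-associativity is used in a genuinely nontrivial way is the case $i<j<k$ of condition $2)$, settled through condition $4)$ of Lemma~\ref{paa}, so that is what I would regard as the crux. I would also keep track of the hypothesis $\mathrm{char}\,F\ne2$, which enters exactly once, to cancel the factor $2$ in the proof of condition $1)$.
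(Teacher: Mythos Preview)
Your proof is correct, and the core is the same as the paper's: both directions go through Lemma~\ref{paa}, and the only nontrivial step---condition~$2)$ of the corollary with $i,j,k$ pairwise distinct---is dispatched via condition~$4)$ of that lemma.

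The one genuine difference is in the setup. You first argue that conditions $1)$ and $2)$ are intrinsic (equivalent to $\E^{2}\E^{2}=0$ and $(\E^{2}\E)\E=0$) and then pass to a natural basis with strictly upper-triangular structure matrix, so that $e_{i}^{2}e_{j}=0$ whenever $j\le i$; your case split for condition~$4)$ is then simply ``$j<k$ versus $k<j$''. The paper instead stays in an arbitrary natural basis and invokes the facts, valid in \emph{every} natural basis of a nil evolution algebra, that $e_{i}^{3}=0$ and $a_{jk}a_{kj}=0$ (from \cite[Proof of Theorem~2.2]{Casas2014}); its case split for condition~$4)$ is ``$a_{jk}=0$ versus $a_{kj}=0$''. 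Your route trades the external identity $a_{jk}a_{kj}=0$ for a basis change plus a basis-independence argument, and is a bit more explicit about the converse direction; the paper's route is shorter but leans on that cited computation. Substantively the two are equivalent.
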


\begin{proof}
Since $\E$ is a nilalgebra, then for all $1 \leq i,j, k \leq n$, we have $e_i^3=0$ and $a_{jk}a_{kj}=0$ \cite[Proof of theorem~2.2]{Casas2014}. According to $2)$ of Lemma~\ref{paa}, we have
$2e_i^2e_j^2=(e_i^2e_j)e_j+(e_j^2e_i)e_i=a_{ij}e_j^3+a_{ji}e_i^3=0$, hence $1)$.

Let's show that $(e_{i}^{2}e_{j})e_{k} = 0$ for $i, j, k$ two by two distinct and consider $4)$ of Lemma~\ref{paa}, i.e.
\begin{equation} \label{Eq6}
0 = (e_{i}^{2}e_{j})e_{k} + (e_{i}^{2}e_{k})e_{j} = a_{ij}a_{jk}e_{k}^{2} + a_{ik}a_{kj}e_{j}^{2}.
\end{equation}
Since $a_{jk}a_{kj}=0$, we consider two cases:

If $a_{jk} = 0$, then $(e_{i}^{2}e_{j})e_{k} = 0$ and \eqref{Eq6} leads to $(e_{i}^{2}e_{k})e_{j} = 0$;

If $a_{kj} = 0$, then $(e_i^2e_k)e_j=0$ and \eqref{Eq6} gives $(e_i^2e_k)e_j=0$;

thus $4)$ of Lemma~\ref{paa} is equivalent to $(e_i^2e_j)e_k=0$, for all $i, j, k$ pairwise distinct.
\end{proof}

\subsection{Characterization of Jordan algebras}

\begin{lem}\label{Jordan}
$\E$ is Jordan algebra if and only if the following conditions are satisfied:

$1)$ $e_{i}^{2}e_{i}^{2} = e_{i}^{4}$ for all $1 \leq i \leq n$ ;

$2)$ $e_{i}^{3}e_{j} = 0$ for all $1 \leq i \neq j \leq n$ ;

$3)$ $(e_{i}^{2}e_{j})e_{i} = 0$ for all $1 \leq i \neq j \leq n$ ;

$4)$ $e_{i}^{2}e_{j}^{2} = (e_{i}^{2}e_{j})e_{j} = (e_{j}^{2}e_{i})e_{i}$ for all $1 \leq i \neq j \leq n$ ;

$5)$ $(e_{i}^{2}e_{j})e_{k} = 0$ for all $1 \leq i, j, k \leq n$ with $i, j, k$ pairwise distinct.
\end{lem}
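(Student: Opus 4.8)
The plan is to read conditions \(1)\)--\(5)\) directly off the Jordan identity. Since every evolution algebra is commutative, \(\E\) is Jordan exactly when \((x^{2}, y, x) = 0\) for all \(x, y \in \E\); and because the associator is linear in its middle slot it suffices to impose \((x^{2}, e_{j}, x) = 0\) for every \(j \in \{1,\dots,n\}\) and every \(x \in \E\). So I would fix \(j\), write \(x = \sum_{i=1}^{n} x_{i}e_{i}\), and expand \((x^{2}, e_{j}, x) = (x^{2}e_{j})x - x^{2}(e_{j}x)\) using the natural-basis rules \(e_{p}e_{q} = 0\) (\(p \neq q\)) and \(e_{p}^{2} = \sum_{k} a_{pk}e_{k}\). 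One gets at once \(x^{2} = \sum_{i} x_{i}^{2}e_{i}^{2}\), \(x^{2}e_{j} = \sum_{i} x_{i}^{2}(e_{i}^{2}e_{j})\), \(e_{j}x = x_{j}e_{j}^{2}\), and hence
\begin{equation*}
(x^{2}, e_{j}, x) = \sum_{i,m} x_{i}^{2}x_{m}\,(e_{i}^{2}e_{j})e_{m} \;-\; x_{j}\sum_{i} x_{i}^{2}\,(e_{i}^{2}e_{j}^{2}).
\end{equation*}

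Next I would split the double sum according to the coincidences among \(i\), \(m\), \(j\); collecting terms rewrites \((x^{2}, e_{j}, x)\) as
\begin{align*}
x_{j}^{3}\big(e_{j}^{4} - e_{j}^{2}e_{j}^{2}\big) &+ \sum_{m \neq j} x_{j}^{2}x_{m}\,(e_{j}^{3}e_{m}) + \sum_{i \neq j} x_{i}^{3}\,(e_{i}^{2}e_{j})e_{i}\\
&+ \sum_{i \neq j} x_{i}^{2}x_{j}\big((e_{i}^{2}e_{j})e_{j} - e_{i}^{2}e_{j}^{2}\big) + \sum_{\substack{i \neq j \\ m \neq i,\, m \neq j}} x_{i}^{2}x_{m}\,(e_{i}^{2}e_{j})e_{m}.
\end{align*}
The five groups carry pairwise different monomials in the \(x_{i}\) (\(x_{j}^{3}\); \(x_{j}^{2}x_{m}\) with \(m \neq j\); \(x_{i}^{3}\) with \(i \neq j\); \(x_{i}^{2}x_{j}\) with \(i \neq j\); \(x_{i}^{2}x_{m}\) with \(i, j, m\) pairwise distinct), so \((x^{2}, e_{j}, x) = 0\) identically forces each coefficient to vanish — coefficient comparison being legitimate since \(F\), of characteristic \(\neq 2\), has at least three elements (one may instead substitute \(x = e_{i}\), \(x = \al e_{i} + \be e_{j}\), \(x = e_{i} + e_{j} + e_{k}\)). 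This yields in turn condition \(1)\); condition \(2)\) after relabelling; condition \(3)\); the single equality \(e_{i}^{2}e_{j}^{2} = (e_{i}^{2}e_{j})e_{j}\); and condition \(5)\). Running the same computation with \((x^{2}, e_{i}, x) = 0\) gives \(e_{j}^{2}e_{i}^{2} = (e_{j}^{2}e_{i})e_{i}\), and commutativity (\(e_{i}^{2}e_{j}^{2} = e_{j}^{2}e_{i}^{2}\)) upgrades this to the full condition \(4)\). Conversely, if \(1)\)--\(5)\) hold, every one of the five groups above vanishes termwise, so \((x^{2}, e_{j}, x) = 0\) for all \(j\) and all \(x\), whence \((x^{2}, y, x) = 0\) for all \(x, y\) by linearity in \(y\), and \(\E\) is Jordan.

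I expect the only delicate point to be the bookkeeping in the case split, and in particular the \(x_{j}^{3}\)-coefficient: the summand with \(i = m = j\) contributes \((e_{j}^{2}e_{j})e_{j} = e_{j}^{4}\), whereas the correction term contributes \(e_{j}^{2}e_{j}^{2}\), and these must not be collapsed to each other — their difference is exactly condition \(1)\). Once every monomial is counted once there is no conceptual obstacle: the Jordan identity for an evolution algebra decomposes monomial by monomial into conditions \(1)\)--\(5)\), which (as one sees by comparing with Lemma~\ref{paa}) are precisely the natural strengthenings of the fourth‑power‑associativity conditions, consistent with every Jordan algebra being fourth power‑associative.
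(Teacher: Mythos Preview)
Your proof is correct and follows essentially the same approach as the paper: both expand the Jordan identity on the natural basis and read off conditions $1)$--$5)$. The paper separates the two directions (linearizing $(x^{2},y,x)=0$ to $2(xz,y,x)+(x^{2},y,z)=0$ and substituting basis vectors for the forward implication, then explicitly expanding $(x^{2}y)x$ and $x^{2}(yx)$ for the converse), whereas you unify both into a single computation by using linearity in $y$ to reduce to $y=e_{j}$ --- a minor organizational difference, not a different route.
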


\begin{proof}
 Suppose that $\E$ is Jordan algebra and $x, y, z \in \E$. We have
\begin{equation} \label{eq10}
(x^{2}, y, x) = 0.
\end{equation}
By linearizing (\ref{eq10}), we obtain
\begin{equation} \label{eq11}
2(xz, y, x) + (x^{2}, y, z) = 0
\end{equation}
Basis vectors $\{e_{i}$ ; $1 \leq i \leq n\}$ verify identities (\ref{eq10}), (\ref{eq11}).

$(\ref{eq10}) \Longrightarrow 1)$ ; taking $x = y = e_{i}$ ;

$(\ref{eq10}) \Longrightarrow 3)$ ; taking $x = e_{i}$ and $y = e_{j}$ with $i \neq j$ ;

$(\ref{eq11}) \Longrightarrow 2)$ ; taking $x = y = e_{i}$ and $z = e_{j}$  with $i \neq j$ ;

$(\ref{eq11}) \Longrightarrow 4)$ ; taking $x = e_{i}$ and $y = z = e_{j}$ ; we also take $x = e_{j}$ and $y = z = e_{i}$ with $i \neq j$ ;

$(\ref{eq11}) \Longrightarrow 5)$ ; taking $x = e_{i},$ $y = e_{j}$ and $z = e_{k}$  $i, j, k$ pairwise distinct.

\medskip Conversely, suppose that conditions  $1)$, $2)$, $3)$, $4)$ and $5)$ are satisfied. Let $\sum_{i=1}^{n} x_{i}e_{i}$ and $y = \sum_{i=1}^{n} y_{i}e_{i}$ be elements of $\E.$ We have the following equalities:
\begin{align*}
x^{2} & = \sum_{i=1}^{n} x_{i}^{2}e_{i}^{2}, \quad yx  =  \sum_{i=1}^{n} y_{i}x_{i}e_{i}^{2}, \\
x^{2}y & =  \sum_{i,j=1}^{n} x_{i}^{2}y_{j}e_{i}^{2}e_{j} =  \sum_{i=1}^{n} x_{i}^{2}y_{i}e_{i}^{3} + \sum_{i = 1}^{n}\sum\limits_{\substack{j = 1 \\ j \neq i}}^{n}x_{i}^{2}y_{j}e_{i}^{2}e_{j},\\
(x^{2}y)x & =  \sum_{i,j=1}^{n} x_{i}^{2}y_{i}x_{j}e_{i}^{3}e_{j} + \sum_{i = 1}^{n}\sum\limits_{\substack{j = 1 \\ j \neq i}}^{n}\sum_{k = 1}^{n}x_{i}^{2}y_{j}x_{k}(e_{i}^{2}e_{j})e_{k}\\
& =  \sum_{i = 1}^{n} x_{i}^{3}y_{i}e_{i}^{4} +  \sum_{i = 1}^{n}\sum\limits_{\substack{j = 1 \\ j \neq i}}^{n}x_{i}^{2}y_{i}x_{j}e_{i}^{3}e_{j} +
\sum_{i = 1}^{n}\sum\limits_{\substack{j = 1 \\ j \neq i}}^{n}x_{i}^{3}y_{j}(e_{i}^{2}e_{j})e_{i} + \sum_{i = 1}^{n}\sum\limits_{\substack{j = 1 \\ j \neq i}}^{n}x_{i}^{2}y_{j}x_{j}(e_{i}^{2}e_{j})e_{j} {}\nonumber\\
& {} + \sum_{i = 1}^{n}\sum\limits_{\substack{j = 1 \\ j \neq i}}^{n}\sum\limits_{\substack{k = 1 \\ \substack{k \neq i \\ k\neq j}}}^{n}x_{i}^{2}y_{j}x_{k}(e_{i}^{2}e_{j})e_{k}, \\
(x^{2}y)x & =  \sum_{i=1}^{n} x_{i}^{3}y_{i}e_{i}^{4} + \sum_{i = 1}^{n}\sum\limits_{\substack{j = 1 \\ j \neq i}}^{n}x_{i}^{2}y_{j}x_{j}(e_{i}^{2}e_{j})e_{j},\\
x^{2}(yx) & =  \sum_{i, j = 1}^{n} x_{i}^{2}y_{j}x_{j}e_{i}^{2}e_{j}^{2} =  \sum_{i=1}^{n} x_{i}^{3}y_{i}e_{i}^{2}e_{i}^{2} + \sum_{i = 1}^{n}\sum\limits_{\substack{j = 1 \\ j \neq i}}^{n}x_{i}^{2}y_{j}x_{j}e_{i}^{2}e_{j}^{2}\\
& = \sum_{i=1}^{n} x_{i}^{3}y_{i}e_{i}^{4} + \sum_{i = 1}^{n}\sum\limits_{\substack{j = 1 \\ j \neq i}}^{n}x_{i}^{2}y_{j}x_{j}(e_{i}^{2}e_{j})e_{j},\\
x^{2}(yx)& =  (x^{2}y)x.
\end{align*}
\end{proof}

\begin{coro}\label{Jordan2}  Let $\E$ be a nil evolution algebra. Then $\E$ is Jordan algebra if and only if the following conditions are satisfied:

$1)$ $e_{i}^{2}e_{j}^{2} = 0$ for all $1 \leq i \leq j \leq n$ ;

$2)$ $(e_{i}^{2}e_{j})e_{k} = 0$ for all $1 \leq i, j, k \leq n.$
\end{coro}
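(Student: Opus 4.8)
The plan is to mimic the proof of Corollary~\ref{PAA}, using Lemma~\ref{Jordan} in place of Lemma~\ref{paa}. The two facts about a nil evolution algebra that will carry the argument are exactly those invoked there: since $a_{ii}=0$, one has $e_i^3=a_{ii}e_i^2=0$ for every $i$, and $a_{jk}a_{kj}=0$ for all $j,k$ (\cite[Proof of theorem~2.2]{Casas2014}). I will also use repeatedly the identity $e_i^2e_j=a_{ij}e_j^2$ for $i\ne j$, which holds because $e_ke_j=0$ whenever $k\ne j$ in an evolution algebra.

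For the direct implication I would assume $\E$ is Jordan, so conditions $1)$--$5)$ of Lemma~\ref{Jordan} hold, and deduce $1)$ and $2)$ of the corollary. For condition $1)$ of the corollary: when $i=j$, condition $1)$ of Lemma~\ref{Jordan} gives $e_i^2e_i^2=e_i^4=e_i^3e_i=0$; when $i\ne j$, condition $4)$ of Lemma~\ref{Jordan} gives $e_i^2e_j^2=(e_i^2e_j)e_j=a_{ij}e_j^3=0$. For condition $2)$ I would split the triple $(i,j,k)$ according to which indices coincide: if $i=j$ then $(e_i^2e_i)e_k=e_i^3e_k=0$; if $j=k\ne i$ then $(e_i^2e_j)e_j=a_{ij}e_j^3=0$; if $k=i\ne j$ this is condition $3)$ of Lemma~\ref{Jordan}; and if $i,j,k$ are pairwise distinct it is condition $5)$ of Lemma~\ref{Jordan}.

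For the converse I would assume $1)$ and $2)$ of the corollary and verify the five conditions of Lemma~\ref{Jordan}. Condition $1)$: $e_i^2e_i^2=0$ by hypothesis $1)$ and $e_i^4=e_i^3e_i=0$ by nilpotency, so both sides vanish. Condition $2)$: $e_i^3e_j=0$ since $e_i^3=0$. Condition $3)$: $(e_i^2e_j)e_i=0$ is the case $k=i$ of hypothesis $2)$. Condition $4)$: $e_i^2e_j^2=0$ by hypothesis $1)$, while $(e_i^2e_j)e_j$ and $(e_j^2e_i)e_i$ are instances of hypothesis $2)$. Condition $5)$: $(e_i^2e_j)e_k=0$ for pairwise distinct $i,j,k$ is again a case of hypothesis $2)$.

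I do not expect a real obstacle: once Lemma~\ref{Jordan} is available the argument is bookkeeping over index patterns, just as for Corollary~\ref{PAA}. The one point requiring mild care is condition $1)$ of Lemma~\ref{Jordan}: the product $e_i^2e_i^2$ is not visibly zero from the evolution structure alone, so the forward direction really uses the Jordan identity there and the converse really needs the extra hypothesis $e_i^2e_i^2=0$. It is worth noting that the resulting conditions coincide verbatim with those of Corollary~\ref{PAA}, which is precisely what lets one conclude that a nil evolution algebra is power-associative if and only if it is Jordan.
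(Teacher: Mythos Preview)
Your proof is correct and follows essentially the same route as the paper: reduce both directions to the five conditions of Lemma~\ref{Jordan} and use $e_i^3=0$ in a nil evolution algebra to make every term vanish. In fact your argument is more thorough than the paper's, which only spells out the computation $e_i^2e_j^2=(e_i^2e_j)e_j=a_{ij}e_j^3=0$ and asserts that condition $2)$ of the corollary ``is exactly'' condition $5)$ of Lemma~\ref{Jordan}, leaving the remaining index cases and the converse implicit.
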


\begin{proof} Since $2)$ is exactly $5)$ of lemma~\ref{Jordan}, it remains to show $1)$.
Since $\E$ is nilalgebra, then for all $1 \leq i \leq n$, we have  $e_i^3=0$. According to $4)$ of Lemma~\ref{Jordan}, we have
$e_i^2e_j^2=(e_i^2e_j)e_j=a_{ij}e_j^3=0$, hence $1)$.
\end{proof}

\begin{prop}\label{Jordan3} Let $\E$ be a nil finite-dimensional evolution algebra. The following statements are equivalent:

$1)$ $\E$ is fourth power-associative algebra.

$2)$ $\E$ is Jordan algebra.
\end{prop}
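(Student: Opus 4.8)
The plan is to derive the equivalence directly from the two corollaries already proved, since between them they have done all the real work. Observe that Corollary~\ref{PAA} says that a nil evolution algebra $\E$ is fourth power-associative if and only if it satisfies the pair of conditions
\[
\text{(a) } e_{i}^{2}e_{j}^{2} = 0 \ \ (1\le i\le j\le n), \qquad \text{(b) } (e_{i}^{2}e_{j})e_{k} = 0 \ \ (1\le i,j,k\le n),
\]
while Corollary~\ref{Jordan2} says that a nil evolution algebra $\E$ is a Jordan algebra if and only if it satisfies \emph{exactly the same} pair of conditions (a) and (b). Hence $1)\Leftrightarrow\text{(a) and (b)}\Leftrightarrow 2)$, and the proposition follows immediately.

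For the write-up I would also record the direction $2)\Rightarrow 1)$ from general principles, as a sanity check requiring no computation: every Jordan algebra is power-associative, hence in particular fourth power-associative. Thus the genuine content of the statement is the converse $1)\Rightarrow 2)$, namely that for a \emph{nil} evolution algebra the single identity $x^{2}x^{2}=x^{4}$ already forces the full Jordan identity $(x^{2},y,x)=0$. This is precisely what the passage from Lemma~\ref{paa} to Corollary~\ref{PAA}, together with Corollary~\ref{Jordan2}, establishes: the nil hypothesis (which yields $e_i^{3}=0$ and $a_{jk}a_{kj}=0$ by \cite[Proof of theorem~2.2]{Casas2014}) collapses both the four-condition characterization of fourth power-associativity in Lemma~\ref{paa} and the five-condition characterization of the Jordan property in Lemma~\ref{Jordan} down to the same two equations (a), (b).

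I do not expect any real obstacle here; the only point to verify carefully is that conditions (a) and (b) are quoted identically in the two corollaries — in particular that condition $5)$ of Lemma~\ref{Jordan} is verbatim condition $2)$ of Corollary~\ref{Jordan2}, which matches condition $2)$ of Corollary~\ref{PAA} — so that the two characterizations literally coincide and one may chain the equivalences without introducing any extra hypothesis beyond $\operatorname{char}F\neq2$, already standing in this section.
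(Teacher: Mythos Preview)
Your proposal is correct and follows exactly the paper's own argument: the paper's proof is the single sentence ``This proposition is a consequence of Corollary~\ref{PAA} and Corollary~\ref{Jordan2},'' and you have simply unpacked that sentence by observing that the two corollaries give identical characterizations. Your added remark that $2)\Rightarrow 1)$ also follows from the general fact that Jordan algebras are power-associative is a harmless sanity check; the one minor inaccuracy is that condition~$5)$ of Lemma~\ref{Jordan} restricts to pairwise distinct indices while condition~$2)$ of Corollary~\ref{Jordan2} does not, so they are not literally ``verbatim'' --- but the missing cases are automatic in the nil setting, so this does not affect the argument.
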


\begin{proof} This proposition is a consequence of Corollary~\ref{PAA} and Corollary~\ref{Jordan2}.
\end{proof}

\subsection{Wedderburn decomposition}

Any finite-dimensional commutative power-associative algebra which is not nilalgebra contains at least one nonzero idempotent (\cite{Schafer}, Proposition 3.3 P 33).

Moreover, in (\cite{Albert1}) the Author shows that any commutative power-associative algebra $\E,$ which has a nonzero idempotent, admits the following Peirce decomposition: $\E = \E_{1}(e)\oplus \E_{0}(e)\oplus \E_{\frac{1}{2}}(e)$ where $\E_{\lambda}(e) = \{x \in \E$ ; $ex = \lambda x\}.$

\begin{defi} (\cite{Casado2016}, D{e}finition 2.4)\label{extension}
An evolution subalgebra of an evolution algebra $\E$ is a subalgebra $\E' \subseteq \E$ such that $\E'$ is an evolution algebra i.e. $\E'$ has a natural basis. We say that $\E'$ has the \textit{extension property} if there exists a natural basis $B'$ of $\E'$ which can be extended to a natural basis $B$ of $\E.$
\end{defi}

$\E$ is a $n$-dimensional evolution algebra over
the field $F$ of characteristic $\neq 2$ and with a natural basis $B = \{e_{i}$ ; $1 \leq i \leq n\}$ whose multiplication table is given by \eqref{Eq1}.

\begin{lem}
 If $\E$ is fourth power-associative, then for all $i \neq j,$ $e_{i}^{3}e_{j} = (e_{i}^{2}e_{j})e_{i} = 0.$
\end{lem}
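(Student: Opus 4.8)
The plan is to use the characterization of fourth power-associativity in Lemma~\ref{paa} together with the fact that every evolution algebra is third power-associative (Remark after Definition of third power-associative), which forces the structure constants to satisfy $a_{ii}^2=a_{ii}$. First I would fix $i\ne j$ and compute the two quantities in the natural basis: since $e_i^2=\sum_k a_{ik}e_k$, we get $e_i^3=e_i^2e_i=\sum_k a_{ik}(e_ke_i)$, and because products $e_ke_i$ vanish unless $k=i$, this collapses to $e_i^3=a_{ii}e_i^2$. Consequently $e_i^3e_j=a_{ii}(e_i^2e_j)=a_{ii}a_{ij}e_j^2$ and $(e_i^2e_j)e_i=a_{ij}(e_je_i)=0$ since $j\ne i$. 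So the second equality $(e_i^2e_j)e_i=0$ is automatic for any evolution algebra, and what remains is to show $e_i^3e_j=0$, i.e. $a_{ii}a_{ij}e_j^2=0$.

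Next I would invoke condition $3)$ of Lemma~\ref{paa}, namely $e_i^3e_j+(e_i^2e_j)e_i=0$ for $i\ne j$. Substituting the two computations above, the second term is zero, so condition $3)$ reads exactly $e_i^3e_j=0$. This is the whole content: fourth power-associativity via Lemma~\ref{paa}(3) directly yields $e_i^3e_j=0$, while $(e_i^2e_j)e_i=0$ holds in every evolution algebra. Thus both desired identities follow.

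I do not expect any serious obstacle here; the statement is essentially a restatement of Lemma~\ref{paa}(3) once one observes the commutativity-forced vanishing $(e_i^2e_j)e_i=a_{ij}e_je_i=0$. The only point requiring a modicum of care is the bookkeeping in the multiplication table — making sure that in expanding $e_i^2e_j=\sum_k a_{ik}(e_ke_j)$ one correctly uses that $e_ke_j=0$ for $k\ne j$ and $e_je_j=e_j^2=\sum_\ell a_{j\ell}e_\ell$, giving $e_i^2e_j=a_{ij}e_j^2$, and similarly tracking indices in $e_i^3e_j$. A brief remark could also note that, combined with $e_i^3=a_{ii}e_i^2$, the identity $e_i^3e_j=0$ is equivalent to $a_{ii}a_{ij}e_j^2=0$, which is consistent with (indeed refines) the known necessary conditions for power-associativity recalled in the introduction. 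I would present the proof in three short lines: expand $e_i^2e_j$ and $e_i^3$ in the natural basis, observe $(e_i^2e_j)e_i=0$, then apply Lemma~\ref{paa}(3) to conclude $e_i^3e_j=0$.
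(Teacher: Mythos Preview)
Your argument contains a computational error that invalidates the main claim. You write $(e_i^2 e_j)e_i = a_{ij}(e_j e_i) = 0$, but this is incorrect: since $e_i^2 e_j = a_{ij} e_j^2$, one has
\[
(e_i^2 e_j)e_i = a_{ij}\,(e_j^2 e_i) = a_{ij} a_{ji}\, e_i^2,
\]
which is \emph{not} automatically zero in an arbitrary evolution algebra (for instance, in the two-dimensional algebra with $e_1^2 = e_2$, $e_2^2 = e_1$ one gets $(e_1^2 e_2)e_1 = e_2 \neq 0$). Hence your assertion that ``$(e_i^2 e_j)e_i = 0$ holds in every evolution algebra'' is false, and condition~$3)$ of Lemma~\ref{paa} does not by itself give $e_i^3 e_j = 0$: it only yields the relation
\[
a_{ii} a_{ij}\, e_j^2 + a_{ij} a_{ji}\, e_i^2 = 0,
\]
and one still has to show that each summand vanishes separately.

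The paper supplies precisely this missing step. Assuming for contradiction that $e_{i}^{3}e_{j} \neq 0$ (so that $a_{ii}\neq 0$, $a_{ij}\neq 0$, $e_j^2\neq 0$, and hence $e_i^2\neq 0$), condition~$3)$ gives $a_{ii}e_j^2 + a_{ji}e_i^2 = 0$; multiplying this relation by $e_i$ produces $2a_{ii}a_{ji}e_i^2 = 0$, which forces $a_{ji}=0$, and then the relation collapses to $a_{ii}e_j^2 = 0$, a contradiction. Thus both $e_i^3 e_j = 0$ and $(e_i^2 e_j)e_i = 0$ follow. Your outline is on the right track in invoking Lemma~\ref{paa}(3), but the decisive multiplication-by-$e_i$ argument is missing.
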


\begin{proof} Suppose that there exists $i_{0} \neq j_{0} \in \{1, \ldots, n\}$ such that $e_{i_{0}}^{3}e_{j_{0}} \neq 0$. Then $a_{i_{0}i_{0}}a_{i_{0}j_{0}}e_{j_{0}}^{2} \neq 0.$
Since $\E$ is power-associative, $0 = e_{i_{0}}^{3}e_{j_{0}} + (e_{i_{0}}^{2}e_{j_{0}})e_{i_{0}} =  a_{i_{0}i_{0}}a_{i_{0}j_{0}}e_{j_{0}}^{2} + a_{i_{0}j_{0}}a_{j_{0}i_{0}}e_{i_{0}}^{2}.$ And since $a_{i_{0}j_{0}} \neq 0$, then
\begin{eqnarray}\label{eq12}
a_{i_{0}i_{0}}e_{j_{0}}^{2} + a_{j_{0}i_{0}}e_{i_{0}}^{2} = 0.
\end{eqnarray}
By multiplying (\ref{eq12}) by $e_{i_{0}},$ we get $0 = a_{i_{0}i_{0}}e_{j_{0}}^{2}e_{i_{0}} + a_{j_{0}i_{0}}e_{i_{0}}^{2}e_{i_{0}} = a_{i_{0}i_{0}}a_{j_{0}i_{0}}e_{i_{0}}^{2} + a_{j_{0}i_{0}}a_{i_{0}i_{0}}e_{i_{0}}^{2} = 2a_{i_{0}i_{0}}a_{j_{0}i_{0}}e_{i_{0}}^{2}$ then $a_{j_{0}i_{0}} = 0.$\\
$(\ref{eq12})$ leads to $a_{i_{0}i_{0}}e_{j_{0}}^{2} = 0$: impossible. We deduce that for all $i \neq j$, $e_{i}^{3}e_{j} = 0$ and $(e_{i}^{2}e_{j})e_{i} = 0.$
\end{proof}

\begin{lem}
Let $\E$ be a fourth power-associative evolution algebra and not nilalgebra.
Then $\E$ admits a nonzero idempotent $e$ such that the evolution subalgebra of $\E$ generated by $e$ has extension property.
\end{lem}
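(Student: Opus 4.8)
The plan is to exploit the Peirce decomposition recalled just before the statement. Since $\E$ is commutative, power-associative (as a fourth power-associative commutative algebra over a field of characteristic $\neq 2,3,5$, by Albert's theorem quoted above — or at least we use the Peirce decomposition which Albert established under fourth power-associativity), and not a nilalgebra, it contains a nonzero idempotent $e$, and $\E = \E_1(e)\oplus\E_{1/2}(e)\oplus\E_0(e)$. The subalgebra generated by $e$ is just $Fe$ (since $e^2=e$), which is trivially an evolution algebra with natural basis $\{e\}$. So the whole content is the \emph{extension property}: I must produce a natural basis $B$ of $\E$ containing $e$.

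First I would choose an idempotent as economically as possible. Starting from the given natural basis $B=\{e_1,\dots,e_n\}$, I would look at how idempotents arise. If some $a_{ii}\neq 0$, then (using the structure of power-associative evolution algebras, namely $a_{ii}^2=a_{ii}$ so $a_{ii}=1$ when nonzero, as recalled in the introduction via \cite{Casado2016}), $e_i^2 = e_i + \sum_{k\neq i}a_{ik}e_k$, and one can try to modify $e_i$ to an idempotent while keeping the rest of the basis. Concretely, I would seek an idempotent of the form $e = e_i + \sum_{k\neq i}\lambda_k e_k$ (a "corrected" version of $e_i$) and then show that replacing $e_i$ by $e$ in $B$ still yields a natural basis, i.e. that $e\,e_k=0$ for $k\neq i$ and that $e^2$ lies in the span appropriately. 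The key point making this work in an evolution algebra is that products of distinct basis vectors vanish, so $e\,e_k = e_i e_k + \sum_\ell \lambda_\ell e_\ell e_k = 0$ automatically for $k\neq i$, and $e^2 = e_i^2 + \sum_k\lambda_k^2 e_k^2$ — so I just need the $\lambda_k$ solving a fixed-point/triangular system forcing $e^2=e$. The earlier lemma (the one immediately preceding, giving $e_i^3 e_j=(e_i^2e_j)e_i=0$) will be used to control cross terms and to guarantee $a_{ii}a_{ij}=0$ type relations, which are exactly what make the correction system solvable.

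The alternative, cleaner route is to argue directly that the idempotent $e$ produced by the Peirce decomposition can be taken from among a natural basis: because $\E$ is an evolution algebra, $x^2=\sum x_i^2 e_i^2$, and idempotency $x^2=x$ forces, comparing coordinates, a very rigid system; one shows the support of $e$ must be "concentrated" so that $e=e_{i_0}$ up to an evolution-algebra change of basis. Then one rescales/translates the remaining $e_k$ by scalar multiples and, if necessary, replaces $e_k$ for $k$ with $e\,e_k\neq 0$ by $e_k - (\text{projection onto } Fe\text{-part})$, checking at each step that the mutual-orthogonality of the new vectors and the "square lands in the span" property (the two requirements for a natural basis) are preserved — this is where the relations $e_i^3e_j=0$, $(e_i^2e_j)e_i=0$ and nilness-type identities for the remaining structure constants get used.

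The main obstacle I anticipate is precisely the extension property: showing that after passing to an idempotent $e$, one can simultaneously (i) keep $e$ as a basis vector, (ii) complete it to a full basis $\{e\}\cup\{f_2,\dots,f_n\}$ with $e f_k=0$ and $f_j f_k=0$ for $j\neq k$, and (iii) have each $f_k^2$ back in the span — all three constraints pulling against each other. I expect the resolution to hinge on the Peirce relations $\E_1(e)\E_0(e)=0$, $\E_1(e)\E_{1/2}(e)\subseteq\E_{1/2}(e)$, etc., combined with the evolution-algebra rigidity (squares of basis vectors are sums over the basis, all other products zero), to show $\E_{1/2}(e)=0$ or at least that the $\E_{1/2}$-part can be absorbed, reducing to a direct-sum decomposition $\E = Fe \oplus \E_0(e)$ where $\E_0(e)$ inherits a natural basis by induction on dimension.

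\begin{rem}
The statement asserts existence of \emph{one} such idempotent; it does not claim every idempotent works, which leaves room for the freedom of choosing $e$ well-adapted to the given natural basis, and this freedom is what the proof should capitalize on.
\end{rem}
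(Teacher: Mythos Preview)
Your first concrete approach contains an actual error. You write that for $e = e_i + \sum_{k\neq i}\lambda_k e_k$ one has $e\,e_k = 0$ ``automatically'' for $k\neq i$; but in fact
\[
e\,e_k \;=\; e_i e_k + \sum_{\ell\neq i}\lambda_\ell e_\ell e_k \;=\; \lambda_k\,e_k^2,
\]
since only the term $\ell=k$ survives. This is nonzero whenever $\lambda_k\neq 0$ and $e_k^2\neq 0$, so the replacement $e_i\rightsquigarrow e$ does \emph{not} in general produce a natural basis. Your fallback plan via the abstract Peirce decomposition is then doing the work of the \emph{next} theorem (Wedderburn), not of this lemma, and leaves the extension property itself unproved.

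The paper's route is both simpler and avoids this trap. Since $\E$ is not nil, some basis vector --- say $e_1$ --- has $e_1^3\neq 0$, hence $a_{11}\neq 0$. Set $e:=a_{11}^{-2}e_1^2$. Fourth power-associativity gives $e_1^2e_1^2=e_1^4=a_{11}^2e_1^2$, so $e^2=e$. The previous lemma yields $e_1^3e_j=0$ for $j\neq 1$; since $e_1^3=a_{11}e_1^2$ and $a_{11}\neq 0$, this forces $e_1^2e_j=0$, i.e.\ $e\,e_j=0$ for all $j\geq 2$. As the $e_1$-coefficient of $e$ is $a_{11}^{-1}\neq 0$, the set $\{e,e_2,\dots,e_n\}$ is a basis, and by what was just shown it is natural. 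No correction system, no Peirce argument, no induction is needed here. Note that if you had pushed your approach through correctly (using the previous lemma to see that $a_{1j}=0$ whenever $e_j^2\neq 0$, so the only admissible corrections satisfy $\lambda_k e_k^2=0$), you would have found $e=e_1^2$ up to scalar anyway --- the paper simply starts there.
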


\begin{proof} Since $\E$ is not nilalgebra, there exists $i_{0} \in \{1, \ldots, n\}$ such that $e_{i_{0}}^{3} \neq 0$. Without losing the generality, we can assume that $e_{1}^{3} \neq 0$. Thus $e=a_{11}^{-2}e_1^2$ is an idempotent. Indeed, $e^{2} = a_{11}^{-4}e_{1}^{2}e_{1}^{2} = a_{11}^{-4}e_{1}^{4} = a_{11}^{-2}e_{1}^{2}=e$.
Let's show that the family $\{e, e_{2}, e_{3}, \ldots, e_{n}\}$ is a natural basis of $\E.$ Let $\alpha_{1}, \ldots, \alpha_{n}$ be scalars such that
\begin{equation} \label{eq13}
0 = \alpha_{1}e + \alpha_{2}e_{2} + \cdots + \alpha_{n}e_{n}.
\end{equation}
By multiplying (\ref{eq13}) by $e,$ we get
\begin{equation} \label{eq14}
0 = \alpha_{1}e^{2} + a_{11}^{-2}(\alpha_{2}e_{1}^{2}e_{2} + \cdots + \alpha_{n}e_{1}^{2}e_{n}) = \alpha_{1}e + a_{11}^{-2}(\alpha_{2}e_{1}^{2}e_{2} + \cdots + \alpha_{n}e_{1}^{2}e_{n})
\end{equation}
For $i \neq 1,$  $0 = e_{1}^{3}e_{i} = a_{11}e_{1}^{2}e_{i}$ leads to $e_{1}^{2}e_{i} = 0$ because $a_{11} \neq 0$. Thus $(\ref{eq14})$ leads to $\alpha_{1}e = 0$, then $\alpha_{1} = 0$ and $(\ref{eq13})$ leads to $\alpha_{2} = \alpha_{3} = \cdots = \alpha_{n} = 0$ because $\{e_{2}, e_{3}, \ldots, e_{n}\}$ is linearly independent. It has been shown in passing that $ee_i=0$ for $i=2,\ldots,n$. So the family $\{e, e_{2}, e_{3}, \ldots, e_{n}\}$ is a natural basis of $\E.$
\end{proof}

\begin{theo}[of Wedderburn]\label{Wedder} Let $\E$ be a not nil power-associative evolution algebra. Then $\E$ admits $s$ nonzero pairwise orthogonal idempotents $u_{1}, u_{2}, \ldots, u_{s}$, such that
\begin{equation}\label{wed}
\E = Ku_{1}\oplus Ku_{2}\oplus \cdots \oplus Ku_{s} \oplus N,
\end{equation}
direct sum of algebras, where $s \geq 1$ is an integer and $N$ is either zero or a nil power-associative evolution algebra.

Furthermore $\E_{ss} = Ku_{1}\oplus Ku_{2}\oplus \cdots \oplus Ku_{s}$ is the semisimple component of $\E$ and $N = Rad(\E)$ is the nil radical of $\E.$
\end{theo}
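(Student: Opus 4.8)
The plan is to build the idempotents one at a time, peeling off a one-dimensional summand at each stage and showing that what remains is again a power-associative evolution algebra whose natural basis meshes with the one already chosen. I would argue by induction on $\dim_F \E$. If $\E$ is nil there is nothing to prove ($s=0$, $N=\E$), so assume $\E$ is not nil. By the previous lemma, $\E$ contains a nonzero idempotent $u_1$ whose generated evolution subalgebra $Ku_1$ has the extension property; concretely, after renumbering the natural basis we may assume $e_1^3\neq 0$, put $u_1=a_{11}^{-2}e_1^2$, and then $\{u_1,e_2,\ldots,e_n\}$ is a natural basis of $\E$ with $u_1e_i=0$ for $i=2,\ldots,n$.

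Next I would identify the Peirce decomposition $\E=\E_1(u_1)\oplus\E_0(u_1)\oplus\E_{1/2}(u_1)$ afforded by commutative power-associativity (Albert's theorem, quoted above). Since $u_1e_i=0$ for $i\geq 2$, each $e_i$ ($i\geq 2$) lies in $\E_0(u_1)$ and $u_1\in\E_1(u_1)$, so in fact $\E_{1/2}(u_1)=0$ and $\E=Ku_1\oplus\E_0(u_1)$ with $\E_0(u_1)=\mathrm{span}\{e_2,\ldots,e_n\}$. The key point is that this is a \emph{direct sum of algebras}: $u_1\cdot e_i=0$ shows $Ku_1\cdot\E_0(u_1)=0$, and $\E_0(u_1)$ is a subalgebra because $e_i^2=\sum_k a_{ik}e_k$ has no $u_1$-component (the coefficient of $e_1$ having been absorbed into $u_1$; more precisely $e_i e_j=0$ for $i\neq j$ and $e_i^2\in\E_0(u_1)$ by the Peirce multiplication rules, which force the $\E_1$-component of a product of two $\E_0$-elements to vanish). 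Thus $\E_0(u_1)$ is an evolution algebra in the basis $\{e_2,\ldots,e_n\}$, and it inherits power-associativity from $\E$ (a subalgebra of a power-associative algebra is power-associative). By the induction hypothesis applied to $\E_0(u_1)$, we get orthogonal idempotents $u_2,\ldots,u_s\in\E_0(u_1)$ and a nil power-associative evolution algebra $N$ with $\E_0(u_1)=Ku_2\oplus\cdots\oplus Ku_s\oplus N$; since $u_1$ annihilates all of $\E_0(u_1)$, the idempotents $u_1,\ldots,u_s$ are pairwise orthogonal and (\ref{wed}) follows.

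Finally I would address the last sentence. That $N=Rad(\E)$: $N$ is a nil ideal (it is an ideal of $\E_0(u_1)$, hence of $\E$ since $Ku_1$ kills it), so $N\subseteq Rad(\E)$; conversely $\E/N\cong Ku_1\oplus\cdots\oplus Ku_s$ is a sum of copies of the field, hence semisimple, so $Rad(\E)\subseteq N$, giving equality. That $\E_{ss}=Ku_1\oplus\cdots\oplus Ku_s$ is the semisimple component is then immediate from $\E/Rad(\E)\cong\E_{ss}$ together with the direct-sum decomposition. I expect the main obstacle to be the bookkeeping in the inductive step: one must check carefully, using conditions $1)$--$4)$ of Lemma~\ref{paa} (equivalently that $\E$ is fourth power-associative, hence power-associative since $\mathrm{char}\,F\neq 2,3,5$ is assumed where Albert's theorem is used), that passing to $\E_0(u_1)$ really does preserve the evolution-algebra structure on the nose and that the Peirce components behave as claimed — in particular that $e_i^2$ for $i\geq 2$ never acquires a $u_1$-term, which is what makes the sum in (\ref{wed}) a sum of algebras rather than merely of vector spaces.
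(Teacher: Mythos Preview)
Your argument is correct and follows essentially the same route as the paper: invoke the preceding lemma to produce an idempotent $u_1$ extending to a natural basis $\{u_1,e_2,\dots,e_n\}$, use the Peirce decomposition to see $\E_{1/2}(u_1)=0$ and $\E_1(u_1)=Ku_1$, then iterate on $\E_0(u_1)$. The paper computes $\E_{1/2}(u_1)=0$ and $\E_1(u_1)=Ku_1$ by writing a generic element $v=\alpha_1u_1+\sum_{i\ge 2}\alpha_i e_i$ and evaluating $u_1v$; your observation that $u_1e_i=0$ puts each $e_i$ directly into $\E_0(u_1)$ is the same computation in disguise. Your explicit induction is just the paper's phrase ``we repeat the reasoning with the evolution subalgebra $\E_0(u_1)$''.

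Two remarks. First, the point you flag as the main obstacle---that $e_i^2$ (for $i\ge 2$) has no $u_1$-component, so that $\E_0(u_1)$ is genuinely a subalgebra---is glossed over in the paper as well (it simply asserts ``direct sum of algebras''). Your appeal to Peirce multiplication rules is legitimate: once $\E_{1/2}(u_1)=0$, linearized fourth-power associativity (equation~(\ref{eq4}) with $x=u_1$ and $y,z\in\E_0$) forces the $\E_1$-component of a product in $\E_0\cdot\E_0$ to vanish. Alternatively and more in the spirit of the paper, condition~2) of Lemma~\ref{paa} applied in the natural basis $\{u_1,e_2,\dots,e_n\}$ gives $2e_i^2u_1^2=(e_i^2u_1)u_1+(u_1^2e_i)e_i$, i.e.\ $2c\,u_1=c\,u_1$ where $c$ is the $u_1$-coefficient of $e_i^2$, so $c=0$. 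Second, you actually supply an argument for the ``Furthermore'' clause ($N=\mathrm{Rad}(\E)$, $\E_{ss}$ semisimple), which the paper states without proof; your reasoning there is sound.
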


\begin{proof} Suppose
that $\E$ is a not nilalgebra. Then $\E$ admits nonzero idempotent $u_{1}$ such that the evolution subalgebra $Ku_1$ generated by $u_{1}$ has extension property and we denote $\E = \E_{1}(u_{1})\oplus \E_{0}(u_{1})\oplus \E_{\frac{1}{2}}(u_{1})$ the Peirce decomposition of $\E$ relative
to $u_{1}$. Then $\{u_{1}, e_{2}, e_{3}, \ldots, e_{n}\}$ is a natural basis of $\E$.

Let's $v = \alpha_{1}u_{1} + \alpha_{2}e_{2} + \alpha_{3}e_{3} + \cdots + \alpha_{n}e_{n} \in \E_{\frac{1}{2}}(u_{1})$.
We have $u_{1}v = \alpha_{1}u_{1}^{2} = \alpha_{1}u_{1} = \frac{1}{2}v = \frac{1}{2}(\alpha_{1}u_{1} + \alpha_{2}e_{2} + \alpha_{3}e_{3} + \cdots + \alpha_{n}e_{n})$. Then $\alpha_{1} = \frac{1}{2}\alpha_{1}$ and $\alpha_{i} = 0$, for $i \neq 1$. We also have $\alpha_{1} = 0$ and $v = 0,$ i.e. $\E_{\frac{1}{2}}(u_{1}) = 0.$

Now let's $v = \alpha_{1}u_{1} + \alpha_{2}e_{2} + \alpha_{3}e_{3} + \cdots + \alpha_{n}e_{n} \in \E_{1}(u_{1}).$
We have $u_{1}v =  \alpha_{1}u_{1} = v = \alpha_{1}u_{1} + \alpha_{2}e_{2} + \alpha_{3}e_{3} + \cdots + \alpha_{n}e_{n}$ leads to $\alpha_{i} = 0$ for $i \neq 1$. Thus, $\E_{1}(u_{1}) = Ku_{1},$ so $\E = Ku_{1}\oplus \E_{0}(u_{1})$ direct sum of algebras.

If $\E_{0}(u_{1})$ is a nilalgebra, it is finished and $s = 1$. Otherwise we repeat the reasoning with the evolution subalgebra  $\E_{0}(u_{1})$ of dimension $n - 1.$ And so on, we get result because $\dim_{K}(\E)$ is finite.
\end{proof}

\begin{theo}\label{Jordan4} The following statements are equivalent:

$1)$ $\E$ is power-associative algebra.

$2)$ $\E$ is Jordan algebra.
\end{theo}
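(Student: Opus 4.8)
The plan is to prove the two implications separately, using that every evolution algebra is commutative and that the substantial work has already been done: the nil case in Proposition~\ref{Jordan3}, and the structural reduction in Theorem~\ref{Wedder}. So the organizing principle is the dichotomy \emph{nil} / \emph{not nil}.

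For the implication $2)\Rightarrow 1)$ I would invoke the classical fact that a commutative algebra satisfying the Jordan identity $(x^{2},y,x)=0$ over a field of characteristic $\neq 2$ is power-associative (Albert; see also \cite{Schafer}); since an evolution algebra is commutative, a Jordan evolution algebra is power-associative. If one prefers to stay inside the present framework, one compares Lemma~\ref{Jordan} with Lemma~\ref{paa}: conditions $1)$--$5)$ of the former at once imply conditions $1)$--$4)$ of the latter (for instance $(e_{i}^{2}e_{j})e_{j}+(e_{j}^{2}e_{i})e_{i}=2e_{i}^{2}e_{j}^{2}$ follows from $4)$ of Lemma~\ref{Jordan}, and $e_{i}^{3}e_{j}+(e_{i}^{2}e_{j})e_{i}=0$ from $2)$ and $3)$ of Lemma~\ref{Jordan}), so a Jordan evolution algebra is fourth power-associative and hence, again, power-associative.

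For the implication $1)\Rightarrow 2)$, suppose $\E$ is power-associative; in particular it is fourth power-associative. If $\E$ is a nilalgebra, Proposition~\ref{Jordan3} shows directly that $\E$ is a Jordan algebra. If $\E$ is not a nilalgebra, I would apply Theorem~\ref{Wedder} to write $\E=Ku_{1}\oplus\cdots\oplus Ku_{s}\oplus N$ as a direct sum of algebras, where each $Ku_{i}$ is a copy of the base field and $N$ is zero or a nil power-associative evolution algebra; by construction the summands are ideals whose pairwise products vanish (this is exactly how the decomposition is produced from the iterated Peirce decomposition, the $\frac{1}{2}$-component being trivial at each step). Each $Ku_{i}$ is associative, hence Jordan; and $N$, being a nil finite-dimensional power-associative, hence fourth power-associative, evolution algebra, is Jordan by Proposition~\ref{Jordan3}. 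Since the Jordan identity $(x^{2},y,x)=0$ involves only a single $x$ and a single $y$ and every product mixing distinct summands is zero, expanding $x$ and $y$ in components shows it holds in $\E$ as soon as it holds in each summand. Hence $\E$ is Jordan.

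I expect no genuine obstacle here: the theorem is essentially an assembly of the preceding results. The only points needing a little care are that the Wedderburn decomposition is a decomposition into ideals with vanishing cross products and that the nil radical $N$ inherits power-associativity — both already contained in the statement of Theorem~\ref{Wedder} — together with the elementary remark that a direct sum of Jordan algebras with vanishing cross products is Jordan. The nil case itself is not an obstacle since it is precisely Proposition~\ref{Jordan3}.
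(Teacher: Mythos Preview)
Your proposal is correct and follows essentially the same approach as the paper: invoke the classical fact that Jordan algebras are power-associative for $2)\Rightarrow 1)$, and for $1)\Rightarrow 2)$ use the Wedderburn decomposition (Theorem~\ref{Wedder}) to reduce to the associative semisimple part plus the nil radical, then apply Proposition~\ref{Jordan3} to the latter. The paper's proof is only more terse; your explicit nil/not-nil split and your remark that a direct sum of ideals with vanishing cross products inherits the Jordan identity are welcome clarifications but not a different route.
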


\begin{proof} $2) \Longrightarrow 1)$ because any Jordan algebra is power-associative.

$1) \Longrightarrow 2)$: Suppose that $\E$ is power-associative. Let $\E = \E_{ss}\oplus N$ be the Wedderburn decomposition of $\E$ where $\E_{ss}$ is the semisimple
component of $\E$ and $N$ is the nil radical of $\E.$

Since $\E_{ss}$ is associative and that \eqref{wed} is a direct somme of algebras, then $\E$ is a Jordan algebra if and only if $N$ is a Jordan algebra. And since $N$ is a nil fourth power-associative evolution algebra,
then it is Jordan algebra by Proposition~\ref{Jordan3}
\end{proof}

\section{Classification}
According to Theorem~\ref{Wedder} and Theorem~\ref{Jordan4}, classification problem of power-associative evolution algebras is reduced to that of  nil power-associative evolution algebras.

\medskip
\begin{defi}\label{indecom} An algebra $\E$ is \textit{decomposable} if there are nonzero ideals $\I$ and $\J$ such that $\E=\I\oplus\J.$ Otherwise, it is \textit{indecomposable}.
\end{defi}

\begin{lem}[\cite{Elduque2016}, Corollary~2.6] Let $\E$ be a finite-dimensional evolution algebra such that $\dim_F(\ann(\E))\geq \frac12\dim_F(\E)\geq 1$. Then $\E$ is decomposable.
\end{lem}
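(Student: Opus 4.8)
The plan is to display an explicit decomposition $\E=\mathcal D\oplus\J$, where $\mathcal D$ is a ``large'' ideal manufactured from $\E^2$ and $\J$ is a complementary slice of the annihilator, so that the whole problem collapses to a dimension count inside $\ann(\E)$. First I would set $n=\dim_F\E$ and $m=\dim_F\ann(\E)$, so that $2m\ge n\ge 2$ by hypothesis. If $\E^{2}=0$ then $m=n$, $\E$ is the trivial algebra of dimension $n\ge 2$, and any splitting of $\E$ into a line and a complementary hyperplane is a decomposition into ideals; so assume $\E^{2}\ne 0$, equivalently $m<n$. Using the description $\ann(\E)=\operatorname{span}\{e_i\in B\mid e_i^{2}=0\}$ recalled in Section~2, choose a natural basis $B=\{e_{1},\dots,e_{n}\}$ ordered so that $e_{1}^{2}=\cdots=e_{m}^{2}=0$ and $e_{i}^{2}\ne 0$ for $i>m$. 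Put $W=\ann(\E)=\operatorname{span}\{e_{1},\dots,e_{m}\}$ and $S=\operatorname{span}\{e_{m+1},\dots,e_{n}\}$, so $\E=W\oplus S$ as vector spaces and $\dim_F S=n-m\le m$.

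Next, for each $i>m$ write $e_{i}^{2}=w_{i}+s_{i}$ with $w_{i}\in W$ and $s_{i}\in S$, set $C=\operatorname{span}\{w_{m+1},\dots,w_{n}\}\subseteq W$, and define $\mathcal D=S+C$. I claim $\mathcal D$ is an ideal of $\E$: for a basis vector $e_{j}$ one has $Ce_{j}=0$ since $C\subseteq\ann(\E)$, and $e_{i}e_{j}=\delta_{ij}e_{i}^{2}=\delta_{ij}(w_{i}+s_{i})\in C+S=\mathcal D$ for $i>m$; the remaining products follow by commutativity, so $\mathcal D\E\subseteq\mathcal D$. Because $S\cap W=0$ one also gets $\mathcal D\cap W=C$. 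Now observe that \emph{every} subspace of $\ann(\E)$ is an ideal of $\E$, being annihilated by $\E$ on both sides; hence, choosing any vector-space complement $\J$ of $C$ in $W$, the subspace $\J$ is an ideal. Then $\mathcal D+\J=S+C+\J=S+W=\E$ and $\mathcal D\cap\J=(\mathcal D\cap W)\cap\J=C\cap\J=0$, so $\E=\mathcal D\oplus\J$ is a direct sum of ideals with $\mathcal D\supseteq S\ne 0$. The only thing left is to guarantee $\J\ne 0$, i.e.\ $\dim_F C<m$.

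This is where the dimension count enters: $\dim_F C\le n-m\le m$, so $\dim_F C<m$ as soon as $n<2m$, which completes the argument in that case. The step I expect to be the genuine obstacle is the boundary case $n=2m$: there $C$ may coincide with $W$ (equivalently $\{w_{m+1},\dots,w_{n}\}$ is a basis of $W$ and $\mathcal D=\E$), so the naive complement degenerates. To treat it one replaces $B$ by the refined natural basis $\{w_{m+1},\dots,w_{n},e_{m+1},\dots,e_{n}\}$ — in which each relation $e_{i}^{2}=w_{i}+s_{i}$ becomes ``a fixed annihilator generator plus a term in $S$'' — and then one must analyse the block of structural constants recording the $S$-parts $s_{i}$, splitting $\E$ according to a block decomposition of that matrix. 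I would expect this last analysis either to force the strict form of the hypothesis or to require a careful case check, since it is precisely here (and not in the linear algebra above) that the substance of the lemma is concentrated.
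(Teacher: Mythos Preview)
The paper does not prove this lemma; it is quoted from \cite{Elduque2016}, Corollary~2.6, with no argument supplied here, so there is no proof in the paper to compare your approach against.

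On its own merits, your argument is incomplete, and you say so yourself. The construction $\E=\mathcal D\oplus\J$ with $\mathcal D=S+C$ and $\J$ a complement of $C$ inside $W=\ann(\E)$ is correct and yields a decomposition into nonzero ideals whenever $\dim_F C<m$; since $\dim_F C\le n-m$, this is automatic when $n<2m$. But the hypothesis explicitly allows $n=2m$, and there $C=W$ can occur: already for $n=4$ with $e_1^2=e_2^2=0$, $e_3^2=e_1$, $e_4^2=e_2$ one has $C=W$, so $\J=0$ and your decomposition degenerates to $\mathcal D=\E$ --- even though this algebra is visibly decomposable as $\langle e_1,e_3\rangle\oplus\langle e_2,e_4\rangle$. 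Your final paragraph proposes passing to the refined natural basis $\{w_{m+1},\dots,w_n,e_{m+1},\dots,e_n\}$ and ``analysing the block of structural constants recording the $S$-parts,'' but you do not carry this out; the sentence ``I would expect this last analysis either to force the strict form of the hypothesis or to require a careful case check'' is an admission, not an argument. Since $n=2m$ is exactly the case with content (the strict inequality is, as you show, a one-line dimension count), the proof as written does not establish the lemma.
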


Moreover for classification, it is sufficient to determine the indecomposable evolution algebras.

We are interested in the classification of indecomposable evolution algebras up to dimension 6. Then $\dim_F(\ann(\E))=1$ or $2$.

\subsection{Nil indecomposable associative evolution algebra}

Suppose that $N$ is an associative algebra. We have $e_{i}^{2}e_{j} = 0,$ for all $i, j = 1, \ldots, n.$ Then for all $ i \in \{1, \ldots, n\}$ such that $e_{i}^{2} \neq 0,$ we have $e_{i}^{2} \in \ann(N)$.

1) $\dim(\ann(N)) = 1.$

Without losing the generality, we can suppose that $\ann(N) = Ke_{n}$.
Thus, for all $i \neq n$, we have $e_{i}^{2} = \alpha_{i}e_{n}$ with $\alpha_{i} \neq 0$.
We set $v_{i} = e_{i}$ and $v_{n} = \alpha_{1}e_{n}$ with $1\leq i \leq n - 1.$ We have:\\
$v_{1}^{2} =  v_{n}$, $v_{i}^{2}  =  \alpha_{i}\alpha_{1}^{-1}v_{n} = \beta_{i}v_{n}$, $v_{n}^{2}  =  0$ with $\beta_{i} \neq 0$ and $2 \leq i \leq n - 1$.

2) $\dim(\ann(N)) = 2.$

Without losing the generality, we can suppose that $\ann(N) = Ke_{n - 1}\oplus Ke_{n}$.
Thus, for all $i \neq n - 1, n$, we have $e_{i}^{2} = \al_{i, n - 1}e_{n - 1} + \al_{i, n}e_{n}$ with $(\al_{i, n - 1}, \al_{in}) \neq 0$.
We can suppose that $N^{2} = Ke_{1}^{2}\oplus Ke_{n - 2}^{2}$, otherwise we swap the vectors $e_{1}, e_{2}, \ldots, e_{n - 2}$.
We set $v_{i} = e_{i}$ ($1 \leq i \leq n$), $v_{n - 1} = v_{1}^{2}$, $v_{n} = v_{n - 2}^{2}$,  $v_{i}^{2} = \alpha_{i, n - 1}v_{n - 1} +\alpha_{i, n }v_{n }$. Also, there is $i_{0} \in \{2, \ldots, n - 3\}$ such that $\alpha_{i_{0}, n - 1}\alpha_{i_{0}n} \neq 0,$ otherwise $N$ would be decomposable.

\subsubsection{One-dimensional classification}
\begin{theo} The only $1$-dimensional nil evolution algebra, indecomposable and associative, is $N_{1,1}$: $e_{1}^{2} = 0$ of type $[1]$.
\end{theo}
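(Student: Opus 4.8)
The plan is to exploit the fact that a one-dimensional evolution algebra is determined by a single structure constant. First I would fix a natural basis $B=\{e_1\}$, so that $e_1^2 = a_{11}e_1$ for some $a_{11}\in F$, and observe that the nilalgebra hypothesis forces $a_{11}=0$: indeed, if $a_{11}\neq 0$ then $e_1^{k}=a_{11}^{k-1}e_1\neq 0$ for every integer $k\geq 1$, so no power of $e_1$ vanishes, contradicting nilness; alternatively, invoke \cite[Theorem~2.7]{Casas2014} recalled above, which puts the structural matrix in strictly upper triangular form, and in dimension one this simply says $a_{11}=0$. Hence $e_1^2=0$, i.e. the algebra is $N_{1,1}$.

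Next I would check that $N_{1,1}$ indeed satisfies all the stated properties. Associativity is immediate: the only structure product is $e_1e_1=0$, so by Lemma~\ref{asso} — whose condition $e_i^2e_j = e_i(e_ie_j)=0$ for $i\neq j$ is vacuous when $n=1$ — the algebra is associative; in fact every associator vanishes because every product is zero. Indecomposability is automatic, since a one-dimensional algebra cannot be written as a direct sum of two nonzero ideals. Finally, for the type I would compute $\ann(\E)=span\{e\in B\mid e^2=0\}=Ke_1=\E$, so that $\ann^1(\E)=\E$ gives $r=1$ with $n_1=\dim_F(\ann^1(\E))=1$, whence the type is $[1]$.

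I do not expect any genuine obstacle here; the statement is essentially a bookkeeping observation. The only point worth spelling out is that, although the general necessary condition for power-associativity of evolution algebras allows $a_{ii}\in\{0,1\}$, the additional nilalgebra assumption rules out $a_{11}=1$, leaving $N_{1,1}$ as the unique possibility in dimension one.
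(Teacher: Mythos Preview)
Your argument is correct and complete. The paper states this theorem without proof, treating it as an immediate observation; your write-up simply makes explicit the obvious reasoning (nilness forces $a_{11}=0$, and the remaining properties are trivial in dimension one), which is exactly the intended approach.
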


\subsubsection{Two-dimensional classification}
\begin{theo} The only $2$-dimensional nil evolution algebra indecomposable and associative is $N_{2,2}$: $e_{1}^{2} = e_{2}$, $e_{2}^{2} = 0$, of type $[1, 1]$.
\end{theo}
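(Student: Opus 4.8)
The plan is to classify all $2$-dimensional nil evolution algebras up to isomorphism and then extract the indecomposable associative ones. Let $N$ be such an algebra with natural basis $B=\{e_1,e_2\}$. By the structure theorem for nil evolution algebras (\cite{Casas2014}) I may assume the matrix of structural constants is strictly upper triangular, so that $e_1^2=a_{12}e_2$ and $e_2^2=0$; in particular $e_2\in\ann(N)$, whence $\dim_F\ann(N)\in\{1,2\}$. I would then note that in dimension two associativity is automatic: by Lemma~\ref{asso} one only has to check $e_1^2e_2=a_{12}e_2^2=0$ and $e_2^2e_1=0$, both of which hold, so the hypothesis that $N$ be associative imposes no restriction here.

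Next I would run the case analysis on $\dim_F\ann(N)$. If $\dim_F\ann(N)=2$, then $N=\ann(N)$, i.e. $N^2=0$, and $N=Ke_1\oplus Ke_2$ is a direct sum of two one-dimensional ideals, so $N$ is decomposable. If $\dim_F\ann(N)=1$, then necessarily $a_{12}\neq 0$ (otherwise $N^2=0$ again), and the normalization established above for the case $\dim\ann(N)=1$, specialized to $n=2$ — equivalently, replacing $e_2$ by $a_{12}e_2$ — yields $e_1^2=e_2$, $e_2^2=0$, that is, $N\cong N_{2,2}$. Hence $N_{2,2}$ is the unique candidate for an indecomposable associative $2$-dimensional nil evolution algebra, of type $[1,1]$ once the type is computed below.

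It remains to verify that $N_{2,2}$ is genuinely indecomposable and to confirm its type. For indecomposability, suppose $N_{2,2}=\I\oplus\J$ with $\I,\J$ nonzero ideals; then both are one-dimensional, say $\I=Ka$ and $\J=Kb$ with $\{a,b\}$ a basis. Since $ab\in\I\cap\J=0$ and $a^2\in Ka$, $b^2\in Kb$, and since $a,b$ are nilpotent (as $N_{2,2}$ is a nilalgebra), a relation $a^2=\lambda a$ forces $a^k=\lambda^{k-1}a$, hence $\lambda=0$; similarly $b^2=0$, so $N_{2,2}^2=0$, contradicting $e_1^2=e_2\neq 0$. For the type, $\ann(N_{2,2})=Ke_2$ while $\ann^2(N_{2,2})=N_{2,2}$ because $e_1^2=e_2\in\ann(N_{2,2})$, so $n_1=n_2=1$ and the type is $[1,1]$. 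The only step that is not an immediate computation is the indecomposability argument, and even there the obstacle is mild: it amounts to the observation that splitting a two-dimensional algebra into two ideal lines kills all products, which is incompatible with $N^2\neq 0$.
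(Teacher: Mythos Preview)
Your proof is correct and follows essentially the same approach as the paper: rule out $\dim_F\ann(N)=2$ as decomposable, then in the remaining case $\dim_F\ann(N)=1$ normalize to obtain $N_{2,2}$, exactly as in the paper's general setup of Section~4.1 specialized to $n=2$. You are more thorough than the paper in that you explicitly verify associativity is automatic in dimension two, confirm indecomposability of $N_{2,2}$ by a direct argument, and compute the type, whereas the paper leaves these implicit.
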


\begin{proof} $\dim(N) = 2$ leads to $\dim(\ann(N)) = 1$, otherwise $N$ would be decomposable.
So, $N$ is isomorphic to $N_{2,2}$: $e_{1}^{2} = e_{2}$ and $e_{2}^{2} = 0.$
\end{proof}

\subsubsection{Three-dimensional classification}
\begin{theo} The only $3$-dimensional nil evolution algebra indecomposable and associative is $N_{3,3}(\alpha)$: $e_{1}^{2} = e_{3}$, $e_{2}^{2} = \alpha e_{3}$, $e_{3}^{2} = 0$, of type $[1, 2]$ where $\alpha \in F^{*}$.
\end{theo}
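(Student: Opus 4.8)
The plan is to follow the same pattern as the one- and two-dimensional cases: first pin down $\dim_F(\ann(N))$, then read off the normal form from case~$1)$ of the associative setup above. Since $N$ is a finite-dimensional nil evolution algebra it is right nilpotent, so $\ann(N)\neq 0$ and $\dim_F(\ann(N))\geq 1$. On the other hand $2\geq\frac{3}{2}=\frac12\dim_F(N)$, so if $\dim_F(\ann(N))\geq 2$ then the Lemma of \cite{Elduque2016} (Corollary~2.6) would force $N$ to be decomposable, against the hypothesis. Hence $\dim_F(\ann(N))=1$ and we are exactly in case~$1)$ with $n=3$.

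Next I would choose a natural basis with $\ann(N)=Ke_3$. Because the annihilator of an evolution algebra is spanned by the basis vectors of square zero, $e_1^2\neq 0$ and $e_2^2\neq 0$; because $N$ is an associative nilalgebra, $e_i^2e_j=0$ for all $i,j$, so $e_1^2=\al_1e_3$, $e_2^2=\al_2e_3$ with $\al_1,\al_2\in F^*$ and $e_3^2=0$. Putting $v_1=e_1$, $v_2=e_2$, $v_3=\al_1e_3$ gives $v_1^2=v_3$, $v_2^2=\al v_3$ with $\al=\al_2\al_1^{-1}\in F^*$, and $v_3^2=0$, so $N\cong N_{3,3}(\al)$. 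It remains to check that, conversely, each $N_{3,3}(\al)$ with $\al\in F^*$ does have the asserted properties: it is an evolution algebra by construction; its structure matrix is strictly upper triangular, so it is a nilalgebra by (\cite{Casas2014}); $e_i^2\in Ke_3=\ann(N_{3,3}(\al))$ gives $e_i^2e_j=0$ for all $i\neq j$, so it is associative by Lemma~\ref{asso}; it is indecomposable since $\dim_F(\ann(N_{3,3}(\al)))=1$ cannot be the sum of the (nonzero) annihilators of two nonzero nil ideals; and $\ann^1(N_{3,3}(\al))=Ke_3$ while $N_{3,3}(\al)/Ke_3$ has zero multiplication, so $\ann^2(N_{3,3}(\al))=N_{3,3}(\al)$, whence $n_1=1$, $n_2=2$ and the type is $[1,2]$.

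I do not expect any genuine obstacle here: every step is a one-line verification, the only engines being the Lemma of \cite{Elduque2016} and the normal form of case~$1)$. The one point worth a remark is that the list is a parametrized family rather than an irredundant one: rescaling $v_3$ shows $N_{3,3}(\al)\cong N_{3,3}(\al t^2)$ for $t\in F^*$, and swapping $v_1$ with $v_2$ and renormalizing shows $N_{3,3}(\al)\cong N_{3,3}(\al^{-1})$; but the theorem only claims that the algebras $N_{3,3}(\al)$, $\al\in F^*$, exhaust the isomorphism classes of $3$-dimensional indecomposable nil associative evolution algebras, which is precisely what the argument above establishes.
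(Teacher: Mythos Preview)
Your argument is correct and follows essentially the same route as the paper: bound $\dim_F(\ann(N))$ via the indecomposability criterion of \cite{Elduque2016}, then invoke the normal form of case~$1)$ from the associative setup. Your write-up is more explicit (you spell out the converse verification and remark on the redundancy of the parametrization), but the key steps coincide with the paper's two-line proof.
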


\begin{proof} $\dim(N) = 3$ leads to $\dim(\ann(N)) < \frac{1}{2}\dim(N) = 1.5$, so $\dim(\ann(N)) = 1$. Thus, $N$ is isomorphic to $N_{3,3}(\alpha)$: $e_{1}^{2} = e_{3},$ $e_{2}^{2} = \alpha e_{3}$ and $e_{3}^{2} = 0$ with $\alpha \in F^{*}.$
\end{proof}

\subsubsection{Four-dimensional classification}
\begin{theo} The only $4$-dimensional nil evolution algebra indecomposable and associative is $N_{4,5}(\alpha, \beta)$: $e_{1}^{2} = e_{4}$, $e_{2}^{2} = \alpha e_{4}$, $e_{3}^{2} = \beta e_{4}$, $e_{4}^{2} = 0$, of type $[1, 3]$, where $\alpha, \beta \in F^{*}$.
\end{theo}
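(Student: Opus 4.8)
The plan is to follow the pattern of the two- and three-dimensional cases: first use the decomposability lemma to pin down the dimension of the annihilator, then invoke the normalization carried out at the start of Section~4.1. Since $N$ is a nonzero $4$-dimensional nil evolution algebra, $\ann(N)\neq 0$ (the last basis vector in the triangular form of \cite[Theorem~2.7]{Casas2014} lies in it), and if $\dim_F(\ann(N))\geq 2=\tfrac12\dim_F(N)$ then $N$ would be decomposable by \cite[Corollary~2.6]{Elduque2016}. Hence $\dim_F(\ann(N))=1$, which places us in case~1) of the discussion preceding the one-dimensional classification.

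Next I would run that case with $n=4$. Associativity together with nilness gives $e_i^2e_j=0$ for all $i,j$ (Lemma~\ref{asso} for $i\neq j$, and $e_i^3=0$ for $i=j$), so each $e_i^2$ lies in $\ann(N)=Ke_4$; writing $e_i^2=\alpha_i e_4$ for $i=1,2,3$, each $\alpha_i$ must be nonzero, since $\alpha_i=0$ would put the basis vector $e_i$ in $Ke_4$. Setting $v_4=\alpha_1e_4$ and $v_i=e_i$ for $i\leq 3$ turns the table into $v_1^2=v_4$, $v_2^2=(\alpha_2\alpha_1^{-1})v_4$, $v_3^2=(\alpha_3\alpha_1^{-1})v_4$, $v_4^2=0$, so $N\cong N_{4,5}(\alpha,\beta)$ with $\alpha,\beta\in F^*$; its type is $[1,3]$ because $\ann(N)=Ke_4$ is one-dimensional and $\ann^2(N)=N$.

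Finally I would check that $N_{4,5}(\alpha,\beta)$ is itself a nil, associative, indecomposable evolution algebra, so that the displayed family is exactly the answer. Nilpotency follows from the triangular shape of its structure matrix via \cite[Theorem~2.7]{Casas2014}, and associativity from the fact that $e_i^2\in Ke_4$ and $e_4e_j=0$ for all $j$, hence $e_i^2e_j=0$ for $i\neq j$, via Lemma~\ref{asso}. For indecomposability, note that $N^2=\langle e_i^2\rangle=Ke_4$ is one-dimensional, so in a decomposition $N=\I\oplus\J$ into nonzero ideals one summand, say $\J$, satisfies $\J^2=0$, whence $\J\subseteq\ann(N)=Ke_4$ and thus $\J=Ke_4$; but $e_1^2=e_4$, while writing $e_1=ce_4+g$ with $g\in\I$ gives $e_1^2=g^2\in\I$, so $e_4\in\I\cap\J=0$, a contradiction. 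The whole argument is routine; the only step requiring a small idea beyond the dimension count and the rescaling is this last indecomposability check.
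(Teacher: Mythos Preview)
Your argument is correct and follows the paper's approach essentially verbatim: bound $\dim\ann(N)$ via \cite[Corollary~2.6]{Elduque2016}, then invoke the case~1) normalization from the start of Section~4.1 with $n=4$. You go further than the paper by explicitly verifying that $N_{4,5}(\alpha,\beta)$ is itself nil, associative, and indecomposable; the paper leaves this implicit, and your decomposition argument via $N^2=Ke_4$ is a clean way to handle the indecomposability check.
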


\begin{proof} $\dim(N) = 4$ leads to $\dim(\ann(N)) < \frac{1}{2}\dim(N) = 2$, so $\dim(\ann(N)) = 1.$
Thus, $N$ is isomorphic to $N_{4,5}(\alpha, \beta)$: $e_{1}^{2} = e_{4},$ $e_{2}^{2} = \alpha e_{4},$ $e_{3}^{2} = \beta e_{4},$ $e_{4}^{2} = 0$ where $\alpha, \beta \in F^{*}.$
\end{proof}

\subsubsection{Five-dimensional classification}
\begin{theo} Let $N$ be a nil indecomposable associative evolution algebra of dimension five. Then $N$ is isomorphic to one and only one of the algebras  $N_{5,8}(\al,\be,\g)$ and $N_{5,9}(\al,\be)$ in Table~2.
\end{theo}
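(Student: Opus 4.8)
The plan is to run, with $n=5$, the same dichotomy on $\dim_F(\ann(N))$ that structures this subsection. As a nil evolution algebra, $N$ has a strictly upper triangular matrix of structural constants in some natural basis (the Theorem of \cite{Casas2014} recalled above), so $e_5^2=0$ and $\ann(N)\neq 0$; and as $N$ is indecomposable, the Lemma of \cite{Elduque2016} recalled above forbids $\dim_F(\ann(N))\geq\frac12\dim_F(N)=\frac52$. Hence $\dim_F(\ann(N))\in\{1,2\}$, and since $\ann(N)$ is defined intrinsically this number (equivalently, the type) is an isomorphism invariant, which will account for the ``only one''. I would then feed each branch into the corresponding normalisation already carried out above (for $\dim(\ann(N))=1$ and for $\dim(\ann(N))=2$).

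Suppose $\dim_F(\ann(N))=1$. The normalisation given above for this case applies verbatim with $n=5$: relabelling so that $\ann(N)=Ke_5$ and rescaling $v_5=\al_1 e_5$, one reaches the table $e_1^2=e_5$, $e_2^2=\al e_5$, $e_3^2=\be e_5$, $e_4^2=\g e_5$, $e_5^2=0$ with $\al,\be,\g\in F^*$, i.e. $N\cong N_{5,8}(\al,\be,\g)$, of type $[1,4]$. Conversely, each $N_{5,8}(\al,\be,\g)$ with $\al,\be,\g\in F^*$ is a nil associative evolution algebra (every $e_i^2e_j$ vanishes) and is indecomposable: if it were $\I\oplus\J$ with $\I,\J$ nonzero ideals, then $N^2=\I^2\oplus\J^2$ would be one-dimensional, so one summand, say $\J$, would satisfy $\J^2=0$; then $\J N=\J\I\subseteq\I\cap\J=0$ (since $\J^2=0$), so $\J\subseteq\ann(N)=Ke_5=N^2\subseteq\I$, contradicting $\I\cap\J=0$.

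Suppose $\dim_F(\ann(N))=2$. Then exactly two natural basis vectors square to zero; relabelling so that $\ann(N)=Ke_4\oplus Ke_5$, the squares $e_1^2,e_2^2,e_3^2$ are nonzero elements of $Ke_4\oplus Ke_5$ (nonzero because $\ann(N)$ is spanned by the zero-square basis vectors, and lying in $Ke_4\oplus Ke_5$ because $N$ is associative), so $\dim_F(N^2)\in\{1,2\}$. The subcase $\dim_F(N^2)=1$ must be discarded first: if $N^2=Kw$, complete $\{w\}$ to a basis $\{w,e\}$ of $\ann(N)$; then $e^2=0$ and $eN=0$, so $Ke$ is a one-dimensional ideal and $Ke_1+Ke_2+Ke_3+Kw$ is a complementary ideal, contradicting indecomposability. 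Hence $\dim_F(N^2)=2$, two of $e_1^2,e_2^2,e_3^2$ are linearly independent, and the normalisation given above for this case applies with $n=5$, where the index $i_0$ runs over the singleton $\{2\}$: after a possible swap there is a natural basis with $e_1^2=e_4$, $e_3^2=e_5$, $e_2^2=\al e_4+\be e_5$, $e_4^2=e_5^2=0$, and the indecomposability requirement $\al_{i_0,n-1}\al_{i_0,n}\neq 0$ reads here $\al\be\neq 0$. Thus $N\cong N_{5,9}(\al,\be)$ with $\al,\be\in F^*$, of type $[2,3]$; the converse (each such $N_{5,9}(\al,\be)$ is nil associative, and indecomposable precisely because $\al\be\neq 0$) is settled by the same ideal bookkeeping as in dimensions $\leq 4$, the resulting parameter constraints being those recorded in Table~2.

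Finally, since $\dim_F(\ann(N))$ equals $1$ on the first family and $2$ on the second and is an isomorphism invariant, a given indecomposable $N$ is isomorphic to a member of exactly one of $N_{5,8}(\al,\be,\g)$ and $N_{5,9}(\al,\be)$, which is the asserted ``one and only one''. I expect the only real subtlety to be the exclusion of the subcase $\dim_F(N^2)=1$ in the second branch; the remaining, field-sensitive question of when two members of the \emph{same} family are isomorphic --- for $N_{5,8}$ it reduces, by analysing the change of natural basis (which must send $e_5$ to a multiple of the new $e_5$), to equivalence of the diagonal quadratic form $\langle 1,\al,\be,\g\rangle$ up to a nonzero scalar and a permutation of its entries --- is precisely what the entries of Table~2 encode, and I would not grind through it here.
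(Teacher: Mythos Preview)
Your proposal is correct and follows essentially the same approach as the paper: both split on $\dim_F(\ann(N))\in\{1,2\}$ via the indecomposability bound and then invoke the normalisations already carried out in the subsection for each case. You supply more detail than the paper's terse proof---in particular the explicit exclusion of the subcase $\dim_F(N^2)=1$ when $\dim_F(\ann(N))=2$, and the verification that the resulting algebras are actually indecomposable---but these are exactly the points the paper leaves implicit in its general setup, so there is no genuine divergence in method.
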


\begin{proof}$\dim(N) = 5$ leads to $\dim(\ann(N)) < \frac{1}{2}\dim(N) = 2.5$, so $\dim(\ann(N)) = 1, 2$.

1) $\dim(\ann(N)) = 1$ ; $N$ is isomorphic to $N_{5,8}(\alpha, \beta, \gamma)$: $e_{1}^{2} = e_{5},$ $e_{2}^{2} = \alpha e_{5},$ $e_{3}^{2} = \beta e_{5},$ $e_{4}^{2} = \gamma e_{5},$ $e_{5}^{2} = 0$ with $\alpha, \beta, \gamma \in F^{*}.$

2) $\dim(\ann(N)) = 2$ ; $N$ is isomorphic to $N_{5,9}(\alpha, \beta)$: $e_{1}^{2} = e_{4},$ $e_{2}^{2} = \alpha e_{4} +  \beta e_{5},$ $e_{3}^{2} = e_{5},$ $e_{4}^{2} = e_{5}^{2} = 0$ with $\alpha, \beta \in F^{*}.$
\end{proof}

\subsubsection{Six-dimensional classification}

\begin{theo} Let $N$ be a nil indecomposable associative evolution algebra of dimension six. Then $N$ is isomorphic to one and only one of the algebras $N_{6,16}(\al,\be,\g,\delta)$, $N_{6,17}(\al,\be,\g)$ and $N_{6,18}(\al,\be,\g,\delta)$ in Table~3.
\end{theo}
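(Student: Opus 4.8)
The plan is to run the same two-case analysis as in the proofs for dimensions $\le 5$, feeding the two structural reductions from the beginning of this subsection into the dichotomy on $\dim(\ann(N))$. As there, $\dim(N)=6$ leads to $\dim(\ann(N))<\frac{1}{2}\dim(N)=3$ (otherwise $N$ is decomposable, by the criterion of \cite{Elduque2016} recalled above), so $\dim(\ann(N))=1$ or $2$. Throughout, $N$ being associative and nil, Lemma~\ref{asso} together with $e_i^3=0$ gives $e_i^2e_j=0$ for \emph{all} $i,j$; hence $N^2\subseteq\ann(N)$ and $N^3=0$, so each $e_i^2$ is either $0$ or a nonzero element of $\ann(N)$, and the classification reduces to linear algebra inside $\ann(N)$.

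\textbf{Case $\dim(\ann(N))=1$.} This is part 1) of the reduction above with $n=6$. Writing $\ann(N)=Ke_6$, each $e_i^2$ ($1\le i\le 5$) is a nonzero multiple of $e_6$; the rescaling $v_1=e_1$, $v_6=a_{11}e_6$, $v_i=e_i$ ($2\le i\le 5$) normalizes this to $e_1^2=e_6$, $e_i^2=\beta_ie_6$ with $\beta_i\in F^*$, $e_6^2=0$, that is, $N_{6,16}(\alpha,\beta,\gamma,\delta)$, of type $[1,5]$. The remaining isomorphisms inside this family are the simultaneous rescalings and permutations of $(1,\beta_2,\beta_3,\beta_4,\beta_5)$.

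\textbf{Case $\dim(\ann(N))=2$.} Here we invoke part 2) of the reduction with $n=6$: $\ann(N)=Ke_5\oplus Ke_6$, and, after permuting $e_1,\dots,e_4$, $N^2=Ke_1^2\oplus Ke_4^2$; setting $v_5=e_1^2$, $v_6=e_4^2$ gives $e_1^2=v_5$, $e_4^2=v_6$, $v_5^2=v_6^2=0$, $e_2^2=\mu_2v_5+\nu_2v_6$, $e_3^2=\mu_3v_5+\nu_3v_6$ (both nonzero), and — as already observed there — at least one of $e_2^2,e_3^2$ is ``mixed'' ($\mu_i\nu_i\ne 0$), else $N$ decomposes. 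If exactly one of them, say $e_2^2$, is mixed and $e_3^2$ is ``pure'' (on $Kv_5$ or on $Kv_6$), we land on $N_{6,17}(\alpha,\beta,\gamma)$. If both are mixed: when they are not parallel, the four lines $Ke_1^2,\dots,Ke_4^2$ are pairwise distinct and we land on $N_{6,18}(\alpha,\beta,\gamma,\delta)$; when they are parallel, replacing the spanning pair $\{e_1,e_4\}$ by $\{e_1,e_2\}$ (so that $e_3^2$ becomes pure) reduces to the previous situation. Both families have type $[2,4]$, and the parameter ranges of Table~3 are obtained by exhausting the residual diagonal scalings $e_i\mapsto\lambda_ie_i$, under which a surviving structural constant $c_{ij}$ changes by $c_{ij}\mapsto\lambda_i^2\lambda_j^{-1}c_{ij}$.

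It remains to verify that the three families are pairwise non-isomorphic and that the parameters in Table~3 are non-redundant: $N_{6,16}$ is separated from the others by its type $[1,5]$ versus $[2,4]$, while $N_{6,17}$ and $N_{6,18}$ are separated by the number of distinct lines $Kx^2$ (for $x$ in a natural basis, $x^2\ne 0$) inside $\mathbb{P}(N^2)$ — three versus four — which one checks is an isomorphism invariant. The main obstacle is exactly this bookkeeping in the case $\dim(\ann(N))=2$: choosing the permutation so that a spanning pair of squares sits in the ``corner'' slots, tracking the $\lambda_i^2\lambda_j^{-1}$-action to pin down canonical parameter ranges, and — most delicately — showing those ranges are genuinely minimal over an arbitrary field of characteristic $\ne 2$, where the square-class obstructions in $F^*/(F^*)^2$ intervene, while also ruling out any isomorphism between two of the three families that is not induced by a permutation and rescaling of a natural basis.
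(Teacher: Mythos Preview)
Your proposal is correct and follows essentially the same route as the paper: the dichotomy on $\dim(\ann(N))\in\{1,2\}$, the immediate reduction to $N_{6,16}$ in the first case, and in the second case the split according to whether the ``middle'' squares $e_2^2,e_3^2$ are pure, mixed-independent, or mixed-parallel, with the parallel subcase handled by re-choosing the spanning pair (the paper does this via the explicit change $w_1=e_1$, $w_2=e_4$, $w_3=e_3$, $w_4=e_2$, $w_5=e_5$, $w_6=\alpha e_5+\beta e_6$). Your additional remarks on distinguishing the families by type and by the number of projective lines $Kx^2$ go beyond what the paper's proof actually carries out, and your caveat that this invariant needs checking (it depends a priori on the natural basis) is well placed.
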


\begin{proof}$\dim(N) = 6$ leads to $\dim(\ann(N)) < \frac{1}{2}\dim(N) = 3$ so $\dim(\ann(N)) = 1, 2.$

1) $\dim(\ann(N)) = 1$ ; $N$ is isomorphic to $N_{6,16}(\alpha, \beta, \gamma,\delta)$: $e_{1}^{2} = e_{6},$ $e_{2}^{2} = \alpha e_{6},$ $e_{3}^{2} = \beta e_{6},$ $e_{4}^{2} = \gamma e_{6},$ $e_{5}^{2} = \delta e_{6},$ $e_{6}^{2} = 0$ with $\alpha, \beta, \gamma, \delta \in F^{*}.$

2) $\dim(\ann(N)) = 2$ ; the multiplication table of $N$ is of the form: $e_{1}^{2} = e_{5},$ $e_{2}^{2} = \alpha e_{5} +  \beta e_{6},$ $e_{3}^{2} = \gamma e_{5} + \delta e_{6},$ $e_{4}^{2} = e_{6},$ $e_{5}^{2} = e_{6}^{2} = 0$ with $\alpha\beta \neq 0$ and $(\gamma, \delta) \neq 0.$
\begin{itemize}
  \item[$a)$]  If $\al\de-\be\g\neq 0$ and $\g\de\ne0$, then the algebra is isomorphic to $N_{6,18}(\al,\be,\g,\de)$.
  \item[$b)$] $\g\de=0$ or $\al\de-\g\be=0$. We consider three cases :
\begin{description}
 \item[$i)$] If $\g=0$ ($\de\ne0$), then $N$ is isomorphic to $N_{6,17}(\al,\be,\de)$ : $e_{1}^{2} = e_{5}$, $e_{2}^{2} = \alpha e_{5} +  \beta e_{6}$, $e_{3}^{2} = \de e_6$, $e_{4}^{2} = e_{6}$, $e_{5}^{2} = e_{6}^{2} = 0$.
\item[$ii)$] If $\de=0$ ($\g\ne0$), by swapping $e_5$ and $e_6$ we find again $N_{6,17}(\be,\al,\g)$.
\item[$iii)$] $\al\de-\be\g=0$ and $\g\de\ne0$. We have $\de=\al^{-1}\be\g$ and by setting  $w_1=e_1$, $w_2=e_4$, $w_3=e_3$, $w_4=e_2$, $w_5=e_5$ and $w_6=\al e_5+\be e_6$, we get algebra $N_{6,17}(-\al\be^{-1},\be^{-1},\al^{-1}\g)$.
\end{description}
\end{itemize}
\end{proof}

\subsection{Nil indecomposable evolution algebra which is not associative}
Suppose that $N$ is not associative. There are $i_{0}, j_{0} \in \{1, \ldots, n\}$ distincts such that $e_{i_{0}}^{2}e_{j_{0}} \neq 0$. We have $0 \neq e_{i_{0}}^{2}e_{j_{0}} = a_{i_{0}j_{0}}e_{j_{0}}^{2}$ leads to $a_{i_{0}j_{0}} \neq 0$, $e_{i_{0}}^{2} \neq 0$ and $e_{j_{0}}^{2} \neq 0$.

Since $N$ is power-associative, we have $0 = (e_{i_{0}}^{2}e_{j_{0}})e_{k} = a_{i_{0}j_{0}}e_{j_{0}}^{2}e_{k}$ then $e_{j_{0}}^{2}e_{k} = 0$, for all integers $k\ne j_0$ and since $e_{j_0}^3=0$, then $e_{j_{0}}^{2}e_{k} = 0$, for all integers $k$. We deduce that $e_{j_{0}}^{2} \in \ann(N)$.

Since $e_{i_0}^2\not\in \ann(N)$, necessarily $a_{ii_0}=0$ for all $1\leq i\leq n$ if not, assuming that $a_{i_1i_0}\ne 0$ for some $i_1$,  we have , for all $1 \leq k \leq n$, $0 = (e_{i_{1}}^{2}e_{i_{0}})e_{k} = a_{i_{1}i_{0}}e_{i_{0}}^{2}e_{k}$, so $e_{i_{0}}^{2}e_{k} = 0$ i.e. $e_{i_{0}}^{2} \in \ann(N)$ : contradiction.


Suppose that for all $i \neq i_0, j_{0}$, $e_{i_{0}}^{2}e_{i} = 0$. Since
\begin{equation*}
e_{i_{0}}^{2} = a_{i_{0}j_{0}}e_{j_{0}} + \sum_{i \neq i_{0}, j_{0}}a_{i_{0}i}e_{i},
\end{equation*}
We have
\begin{equation*}
0=e_{i_{0}}^{2}e_{i_{0}}^{2} = a_{i_{0}j_{0}}^{2}e_{j_{0}}^{2} + \sum_{i \neq i_{0}, j_{0}}a_{i_{0}i}^{2}e_{i}^{2} = a_{i_{0}j_{0}}^{2}e_{j_{0}}^{2} \textrm{ because } a_{i_{0}i}^{2}e_{i}^{2} = a_{i_{0}i}e_{i_{0}}^{2}e_{i} = 0.
\end{equation*}
This is impossible because $a_{i_{0}j_{0}}e_{j_{0}}^{2} \neq 0.$
Then there is $j_{1}$ distinct from $i_{0}$ and $j_{0}$ such that $e_{i_{0}}^{2}e_{j_{1}} \neq 0$. The argument at the beginning of the paragraph tells us that $e_{j_{1}}^{2} \in \ann(N).$
Since $N$ is nil, then $\ann(N) \neq 0$, $\dim(N) > 3$ and without losing generality, we will set $i_{0} = 1,$ $j_{0} = 2$ and $j_{1} = 3.$

\begin{theo}[\cite{Myung}, Theorem~2]
Let $A$ be a commutative power-associative nilalgebra of nil-index $4$ and dimension $4$ over a field $F$ of characteristic $\neq$ 2, then $A^{4} = 0$ and there is $y \not\in A^{2}$ such that $yA^{2} = 0$.
\end{theo}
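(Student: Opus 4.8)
This is cited from \cite{Myung}; a self‑contained proof would proceed by pinning down the multiplication table of $A$ up to isomorphism and reading off the two conclusions. Since the nil-index is $4$, pick $z\in A$ with $z^{3}\neq 0$ (so $z^{2}\neq 0$) and $z^{4}=0$; multiplying a dependence relation $\alpha z+\beta z^{2}+\gamma z^{3}=0$ by $z^{2}$ and then by $z$ shows that $z,z^{2},z^{3}$ are linearly independent, so they complete to a basis $\{z,z^{2},z^{3},w\}$ of $A$. The products among $z,z^{2},z^{3}$ are $z\,z^{j}=z^{j+1}$, vanishing once the exponent is $\ge 4$, so only $zw$, $z^{2}w$, $z^{3}w$ and $w^{2}$ remain to be determined.

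\emph{Identities.} Since the nil-index is $4$, $x^{4}=0$ for every $x\in A$, and power-associativity gives $x^{4}=x^{2}x^{2}$. Computing the $\lambda$-coefficient of $(a+\lambda b)^{4}=0$ in two ways, from $x^{4}=(x^{2}x)x$ and from $x^{4}=x^{2}x^{2}$, yields over a field of characteristic $\neq 2$
\[
a^{3}b+(a^{2}b)a+2\bigl((ab)a\bigr)a=0\qquad\text{and}\qquad a^{2}(ab)=0\qquad(a,b\in A);
\]
polarizing the second identity in $a$ gives $a^{2}(bc)+2(ab)(ac)=0$, and polarizing once more gives $(ab)(cd)+(ac)(bd)+(ad)(bc)=0$, while the linearizations of $x^{5}=x^{2}x^{3}=0$ and $x^{6}=0$ supply what is additionally needed when $\operatorname{char}F\in\{3,5\}$. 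One also uses the classical fact that a commutative power-associative nilalgebra of dimension $\le 4$ is nilpotent (so in particular $A^{2}\subsetneq A$).

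\emph{The table, and the two conclusions.} From $z^{2}\neq 0$ one gets $\dim A^{2}\neq 0$; $\dim A^{2}=1$ would give $z^{3}=zz^{2}\in Kz^{2}$, so $z^{3}=cz^{2}$ and $0=z^{4}=c^{2}z^{2}$ forces $z^{3}=0$, excluded; and $\dim A^{2}=4$ is excluded by nilpotency. The remaining case $\dim A^{2}=3$ is ruled out by substituting the basis vectors into $a^{2}(ab)=0$ and $a^{2}(bc)+2(ab)(ac)=0$: these force $zw$, $z^{2}w$, $z^{3}w$, $w^{2}$ all to lie in $Kz^{2}+Kz^{3}$, contradicting $\dim A^{2}=3$. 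Hence $A^{2}=Kz^{2}\oplus Kz^{3}$, and in particular $zw,z^{2}w,z^{3}w,w^{2}\in Kz^{2}\oplus Kz^{3}$. Imposing $(z+\lambda w)^{4}=0$ for all $\lambda$ (extending $F$ if necessary) forces $z^{2}w\in Kz^{3}$, say $z^{2}w=\tau z^{3}$; then in $z^{3}w+(z^{2}w)z+2\bigl((zw)z\bigr)z=0$ every term on the right vanishes because $z^{4}=0$, so $z^{3}w=0$. Thus $z^{3}\in\operatorname{ann}(A)$, whence $A^{3}=A^{2}A=Kz^{3}$ and $A^{4}=A^{3}A\subseteq(Kz^{3})A=0$. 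Finally $y:=w-\tau z\notin A^{2}$ satisfies $yz^{2}=z^{2}w-\tau z^{3}=0$ and $yz^{3}=0$, i.e.\ $yA^{2}=0$, giving the required element.

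The crux is the exclusion of $\dim A^{2}=3$ together with the subsequent determination of the four unknown products: it is a finite but delicate computation in which the $z$- and $w$-components of $zw$, $z^{2}w$, $w^{2}$ feed back into one another through the linearized power-associativity identities, and it is also the step at which those identities (rather than $x^{2}x^{2}=x^{4}$ alone) are genuinely used.
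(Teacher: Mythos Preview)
The paper does not prove this statement; it quotes it from Gerstenhaber--Myung \cite{Myung} solely to motivate the subsequent generalization to evolution algebras, with no argument supplied. You correctly identify this and then sketch how a self-contained proof would go.

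Your outline is broadly that of the original source: pick $z$ with $z^{3}\neq 0$, complete $\{z,z^{2},z^{3}\}$ to a basis $\{z,z^{2},z^{3},w\}$, and use the linearizations of $x^{4}=0$ (in both the $x^{2}x^{2}$ and the $(x^{2}x)x$ form) to pin down the remaining products. One caution: appealing to the ``classical fact'' that a four-dimensional commutative power-associative nilalgebra is nilpotent, in order to exclude $\dim A^{2}=4$, is close to circular here, since that nilpotency is exactly what Theorems~1 and~2 of \cite{Myung} together establish; in the original the strict inclusion $A^{2}\subsetneq A$ has to be extracted from the identities themselves. And as you yourself acknowledge, the exclusion of $\dim A^{2}=3$ and the reduction forcing $z^{2}w\in Kz^{3}$ are the substantive computations, which your sketch asserts rather than carries out. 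With those gaps filled, the strategy is correct and matches \cite{Myung}.
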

For the evolution algebras, this theorem has the following generalization:
\begin{theo}
Let $\E$ be a finite-dimensional nil power-associative evolution algebra which is not associative. Then $\E$ is of nil-index $4$, $\E^4=0$ and there is $y\not\in \E^2$ such that $y\E^2=0$.
\end{theo}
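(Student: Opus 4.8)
\medskip\noindent\textbf{Proof proposal.} The plan is to read everything off the identities already available for a natural basis, together with the structural restriction established just before the statement. Fix a natural basis $B=\{e_i\,;\,1\le i\le n\}$ with structure constants $a_{ij}$. Since $\E$ is power-associative it is in particular fourth power-associative, so Corollary~\ref{PAA} applies: $e_i^2e_j^2=0$ for all $1\le i\le j\le n$, hence $e_i^2e_j^2=0$ for all $i,j$ by commutativity, and $(e_i^2e_j)e_k=0$ for all $1\le i,j,k\le n$.

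First I would obtain $\E^4=0$ and the nil-index. Since $\E^2=\langle e_i^2\,;\,1\le i\le n\rangle$, we have $\E^3=\E^2\E=\langle e_i^2e_j\,;\,1\le i,j\le n\rangle$, so $\E^4=\E^3\E$ is spanned by the elements $(e_i^2e_j)e_k$, all of which vanish; thus $\E^4=0$. For the nil-index, take $x=\sum_{i=1}^n x_ie_i$; then $x^2=\sum_i x_i^2e_i^2$, and using power-associativity together with $e_i^2e_j^2=0$,
\begin{equation*}
x^4=x^2x^2=\sum_{i,j=1}^n x_i^2x_j^2\,e_i^2e_j^2=0 .
\end{equation*}
So every $x\in\E$ satisfies $x^4=0$. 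On the other hand $\E$ is not associative, so by the theorem asserting that a finite-dimensional evolution algebra is an associative nilalgebra if and only if $x^3=0$ for all $x$, it is false that $x^3=0$ holds identically; any $x_0$ with $x_0^3\ne0$ shows the nil-index is not $\le3$, hence it is exactly $4$.

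It remains to exhibit $y\notin\E^2$ with $y\E^2=0$. Here I would use the indices $i_0\ne j_0$ from the paragraph preceding the statement, for which $e_{i_0}^2e_{j_0}\ne0$; in particular $e_{i_0}^2\notin\ann(\E)$, and it was already shown that this forces $a_{ii_0}=0$ for every $i$. Consequently each $e_j^2=\sum_k a_{jk}e_k$ lies in $\langle e_k\,;\,k\ne i_0\rangle$, so $\E^2\subseteq\langle e_k\,;\,k\ne i_0\rangle$ and $e_{i_0}\notin\E^2$. Moreover, for every $j$,
\begin{equation*}
e_{i_0}e_j^2=e_j^2e_{i_0}=\Big(\sum_k a_{jk}e_k\Big)e_{i_0}=a_{ji_0}e_{i_0}^2=0 ,
\end{equation*}
so $e_{i_0}\E^2=0$; thus $y=e_{i_0}$ has the required properties.

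There is no genuine obstacle here: the content is entirely in Corollary~\ref{PAA} and in the already-derived equalities $a_{ii_0}=0$, so the work is essentially organizational. The two points to keep in mind are that one needs full power-associativity (not merely $\E^4=0$) in order to conclude $x^4=0$ for an arbitrary $x$, and that the lower bound $\text{nil-index}\ge4$ really relies on the hypothesis that $\E$ is not associative.
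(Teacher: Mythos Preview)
Your proof is correct and follows essentially the same route as the paper's: both deduce $\E^4=0$ and $x^4=0$ from Corollary~\ref{PAA}, obtain nil-index~$4$ from the failure of associativity, and take $y=e_{i_0}$ using the relations $a_{ki_0}=0$ established in the paragraph preceding the theorem. The only cosmetic differences are that the paper computes $x^4$ as $x^3x$ rather than $x^2x^2$, builds an explicit element $e_{i_0}+e_{j_0}$ with nonzero cube instead of citing the earlier characterization of associative nil evolution algebras, and shows $e_{i_0}\notin\E^2$ by contradiction via $\E^2\E^2=0$ rather than by your direct spanning argument.
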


\begin{proof}
Let's suppose that $\E$ is a power-associative evolution nilalgebra which is not associative and $x=\sum_{i=1}^nx_ie_i$. We have $x^3=\sum_{i,j=1}^nx_i^2x_je_i^2e_j$ and $x^4=\sum_{i,j,k=1}x_i^2x_jx_k(e_i^2e_j)e_k=0$ by Corollary~\ref{PAA}. Since $\E$ is not associative, there are integers $i_0\ne j_0$ such that $e_{i_0}^2e_{j_0}\ne 0$. In this case $e_{j_0}^2e_{i_0}=0$ and then the element $a=e_{i_0}+e_{j_0}$ checks  $a^3=e_{i_0}^2e_{j_0}+e_{j_0}^2e_{i_0}=e_{i_0}^2e_{j_0}\ne 0$. Thus the nilindex of $\E$ is $4$.
We have $\E^{3} = \big<e_{i}^{2}e_{j}$ ; $1 \leq i, j \leq n\big> \neq 0$ because $\E$ is not associative and $\E^{4} = \big<(e_{i}^{2}e_{j})e_{k}$ ; $1 \leq i, j, k \leq n\big> = 0$ by Corollary~\ref{PAA}. Since $e_{i_0}^2\not\in \ann{\E}$ necessarily $a_{ki_0}=0$ and $e_{k}^{2}e_{i_{0}} = a_{ki_{0}}e_{i_{0}}^{2} = 0$ for all integer $k$, hence $e_{i_{0}}\E^{2} = 0$. In fact $e_{i_0}\not\in \E^2$ otherwise we would have $e_{i_{0}}^{2} \in \E^{[3]} = \E^{2}\E^{2} =  \big<e_{i}^{2}e_{j}^{2}$ ; $1 \leq i, j \leq n\big> = 0$: absurd because $e_{i_{0}}^{2} \neq 0$, hence the theorem
\end{proof}

\subsubsection{Four-dimensional classification}
\begin{theo} The only $4$-dimensional nil indecomposable power-associative which is not associative evolution algebra is $N_{4,6}$ in Table~1.
\end{theo}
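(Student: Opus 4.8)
The plan is to read off the multiplication table of $N$ from the structural facts already established in this subsection (specialized to $\dim N=4$), impose the one power-associativity relation that survives, normalize by natural-basis changes down to a single algebra — which is $N_{4,6}$ — and then verify the converse. By the discussion preceding the theorem we may relabel so that $i_0=1$, $j_0=2$, $j_1=3$, whence $e_1^2e_2\ne0$, $e_1^2e_3\ne0$, the first column of the structure matrix vanishes, $e_2^2,e_3^2\in\ann(N)$, and $a_{11}=0$ by nilpotency. From $0\ne e_1^2e_2=a_{12}e_2^2$ and $0\ne e_1^2e_3=a_{13}e_3^2$ we get $a_{12},a_{13}\ne0$ and $e_2^2,e_3^2\ne0$. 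Since $\ann(N)=\mathrm{span}\{e_i\in B:e_i^2=0\}$ is nonzero ($N$ is nil) but cannot contain $e_1,e_2$ or $e_3$, necessarily $e_4^2=0$ and $\ann(N)=Ke_4$; hence $e_2^2=a_{24}e_4$, $e_3^2=a_{34}e_4$ with $a_{24},a_{34}\ne0$, and
\[
e_1^2=a_{12}e_2+a_{13}e_3+a_{14}e_4,\qquad e_2^2=a_{24}e_4,\qquad e_3^2=a_{34}e_4,\qquad e_4^2=0,
\]
all other products being $0$.

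Next I would impose power-associativity via Corollary~\ref{PAA}: $e_i^2e_j^2=0$ and $(e_i^2e_j)e_k=0$ for all $i,j,k$. Because $e_2^2,e_3^2,e_4^2$ all lie in $Ke_4$, because $e_4$ annihilates $N$, and because $e_1^3=0$, every one of these identities holds automatically except $e_1^2e_1^2=0$, which reads
\[
a_{12}^2a_{24}+a_{13}^2a_{34}=0 .
\]
For the normalization, first replace $e_2$ by $e_2+(a_{14}/a_{12})e_4$: this is again a natural basis (the cross products stay $0$ and its square is still $a_{24}e_4$) and it removes the $e_4$-term of $e_1^2$, so we may take $a_{14}=0$. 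Then set $f_1=e_1$, $f_2=a_{12}e_2$, $f_3=a_{13}e_3$, $f_4=a_{12}^2a_{24}\,e_4$; all cross products again vanish and $f_4^2=0$, so this is a natural basis, and one computes $f_1^2=f_2+f_3$, $f_2^2=f_4$, $f_3^2=a_{13}^2a_{34}\,e_4=-f_4$ (by the displayed relation), $f_4^2=0$. No free parameter remains, so the isomorphism class is forced; this algebra is $N_{4,6}$.

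Finally I would check the converse. The algebra $N_{4,6}$ just obtained is $4$-dimensional and nil; it meets the conditions of Corollary~\ref{PAA} — the only nonvacuous instance being $f_1^2f_1^2=(f_2+f_3)^2=f_2^2+f_3^2=0$ — hence it is a Jordan algebra by Proposition~\ref{Jordan3} and in particular power-associative; it is not associative since $f_1^2f_2=(f_2+f_3)f_2=f_2^2=f_4\ne0$ (Lemma~\ref{asso}); and it is indecomposable, since $\ann(N_{4,6})=Kf_4$ is one-dimensional while a nontrivial decomposition into ideals would split the annihilator as a direct sum of the annihilators of the two summands, both nonzero because each summand is a nonzero nil evolution algebra, forcing $\dim\ann\ge2$.

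The argument is essentially bookkeeping; the only points that require care are the annihilator computation that forces $e_4^2=0$ — equivalently $\dim\ann(N)=1$, which eliminates the a priori possible $\dim\ann(N)=2$ branch — and the verification at each step of the normalization that the new basis is still a natural basis, so that one is genuinely exhibiting an isomorphism of evolution algebras.
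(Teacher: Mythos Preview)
Your proof is correct and follows essentially the same route as the paper: from the structural constraints established in the preamble to this subsection you read off the multiplication table, impose $e_1^2e_1^2=0$, and normalize by natural-basis changes to reach $N_{4,6}$. The only cosmetic differences are that the paper absorbs the $a_{14}$-term and rescales in a single substitution ($v_2=a_{12}e_2+a_{14}e_4$, $v_3=a_{13}e_3$, $v_4=v_2^2$) rather than two, invokes the indecomposability bound for $\dim\ann(N)=1$ rather than arguing it directly, and omits the converse verification you supply.
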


\begin{proof} Since $\dim(N) = 4$, we have $\dim(\ann(N)) = 1.$ From the above
\begin{eqnarray*}
e_{1}^{2} & = &  a_{12}e_{2} + a_{13}e_{3} + a_{14}e_{4},
e_{2}^{2} =  a_{24}e_{4}, e_{3}^{2}  =   a_{34}e_{4},
e_{4}^{2}  =  0 \textrm{ with }a_{12}a_{13}a_{24}a_{34} \neq 0.
\end{eqnarray*}
Let's set $v_{2} = a_{12}e_{2} + a_{14}e_{4},$  $v_{3} = a_{13}e_{3}$. We have $e_{1}^{2} = v_{2} + v_{3}$ and $0 = e_{1}^{2}e_{1}^{2} = v_{2}^{2} + v_{3}^{2}$ leads to $v_{3}^{2} = -v_{2}^{2}$.
Then we set $v_{4} = v_{2}^{2} = a_{12}^{2}a_{24}e_{4}$ ; so $N$ is isomorphic to
\begin{eqnarray*}
\textrm{$N_{4,6}$ : }e_{1}^{2} & = &  e_{2} + e_{3},\quad
e_{2}^{2}  =  e_{4},\quad
e_{3}^{2}  =  -e_{4},\quad
e_{4}^{2}  =   0
\end{eqnarray*}
of type $[1, 2, 1]$.
\end{proof}

\subsubsection{Five-dimensional classification}
\begin{theo}Let $N$ be a nil indecomposable power-associative which is not associative evolution algebra, of dimension five. Then $N$ is isomorphic to one and only one of the algebras $N_{5,10}(\al)$, $N_{5,11}(\al)$ and $N_{5,12}(\al,\be)$ in Table~2.
\end{theo}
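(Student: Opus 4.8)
The plan is to imitate the structure of the associative five-dimensional classification proved above: first fix $\dim_F\ann(N)$, then exhaust the possible natural multiplication tables. By Theorem~\ref{Jordan4}, $N$ is a Jordan algebra, so by Corollary~\ref{Jordan2} there is a natural basis $B=\{e_1,\dots,e_5\}$ with $e_i^2e_j^2=0$ and $(e_i^2e_j)e_k=0$ for all $i,j,k$. By the analysis opening this subsection (where $N$ is not associative), after relabelling $B$ we may assume $e_1^2\notin\ann(N)$, $e_1^2e_2\neq0$, $e_1^2e_3\neq0$ (so $a_{12},a_{13}\neq0$), with $e_2^2,e_3^2$ nonzero and lying in $\ann(N)$, and $a_{i1}=0$ for all $i$; recall also that $N$ has nil-index $4$ and $N^4=0$. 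Since $N$ is indecomposable of dimension $5$, we have $\dim_F\ann(N)\in\{1,2\}$ as observed above.

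First I would exclude $\dim_F\ann(N)=2$. Relabelling so that $\ann(N)=Ke_4\oplus Ke_5$, the relation $0=e_1^2e_1^2=a_{12}^2e_2^2+a_{13}^2e_3^2$ (using $e_4^2=e_5^2=0$) forces $e_2^2$ and $e_3^2$ to be collinear. Passing to the natural basis $\{e_1,v_2,v_3,e_4,e_5\}$ with $v_2:=a_{12}e_2+a_{14}e_4+a_{15}e_5$ and $v_3:=a_{13}e_3$, one gets $e_1^2=v_2+v_3$, $v_3^2=-v_2^2$ and $0\neq v_2^2\in\ann(N)$, so the image of squaring inside $\ann(N)$ is the single line $Kv_2^2$. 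Choosing $w\in\ann(N)$ with $\{v_2^2,w\}$ a basis of $\ann(N)$, the basis $\{e_1,v_2,v_3,v_2^2,w\}$ is natural and $w$ occurs in no product, so $N=\langle e_1,v_2,v_3,v_2^2\rangle\oplus Kw$ is decomposable, a contradiction. (Conversely $\dim_F\ann(N)=1$ already forces indecomposability, since a nonzero nil evolution algebra has nonzero annihilator.) Hence $\ann(N)=Ke_5$, and one may write $e_2^2=\la_2e_5$, $e_3^2=\la_3e_5$ with $\la_2,\la_3\neq0$, $e_4^2\neq0$, and $e_1^2=a_{12}e_2+a_{13}e_3+a_{14}e_4+a_{15}e_5$.

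Then I would split into three cases. If $a_{14}\neq0$, then $e_1^2e_4=a_{14}e_4^2\neq0$, so $0=(e_1^2e_4)e_k=a_{14}e_4^2e_k$ gives $e_4^2\in\ann(N)=Ke_5$; after the basis change $v_j=a_{1j}e_j$ $(j=2,3)$, $v_4=a_{14}e_4+a_{15}e_5$ and a rescaling of $e_5$, the relation $e_1^2e_1^2=0$ produces $e_1^2=v_2+v_3+v_4$, $v_2^2=e_5$, $v_3^2=\al e_5$, $v_4^2=-(1+\al)e_5$, $e_5^2=0$ with $\al\neq0,-1$. If $a_{14}=0$ and $e_4^2\in Ke_5$, then with $v_2=a_{12}e_2+a_{15}e_5$, $v_3=a_{13}e_3$ and a rescaling of $e_5$ one reaches $e_1^2=v_2+v_3$, $v_2^2=e_5$, $v_3^2=-e_5$, $e_4^2=\be e_5$, $e_5^2=0$ with $\be\neq0$. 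If $a_{14}=0$ and $e_4^2\notin Ke_5$, the opening argument of this subsection applied to $e_4$ gives $a_{k4}=0$ for all $k$, hence $e_4^2=a_{42}e_2+a_{43}e_3+a_{45}e_5$; then $e_1^2e_4^2=0$ together with $e_1^2e_1^2=0$ forces $(a_{42},a_{43})$ proportional to $(a_{12},a_{13})$, i.e.\ $e_4^2\equiv te_1^2\pmod{Ke_5}$ with $t\neq0$, and the same normalization yields $e_1^2=v_2+v_3$, $v_2^2=e_5$, $v_3^2=-e_5$, $e_4^2=t(v_2+v_3)+ce_5$, $e_5^2=0$; a final rescaling of $e_1,e_4,e_5$ turns $(t,c)$ into the recorded pair $(\al,\be)$. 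After reindexing, these three normal forms are exactly $N_{5,10}(\al)$, $N_{5,11}(\al)$ and $N_{5,12}(\al,\be)$ of Table~2.

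It then remains to verify that the three families are pairwise non-isomorphic and to pin down the exact parameter ranges together with the residual equivalences inside each family (coming from permuting the $v_j$ and from simultaneous rescalings of $e_1$ and $e_5$): the third normal form has type $[1,2,2]$ whereas the first two have type $[1,3,1]$, and the two of type $[1,3,1]$ are separated by direct comparison of their structure constants under the admissible base changes (only the first carries a genuine one-parameter modulus). The steps I expect to be the main obstacle are: making the case split genuinely exhaustive, in particular ruling out a second ``non-associativity direction'' beyond $e_1^2$ (which is precisely what forces the careful bookkeeping on $e_4^2$); performing each normalization through changes of basis that remain \emph{natural}, which is much more rigid than in the general commutative setting; and the final identification of parameters, especially checking that $N_{5,10}(\al)$ and $N_{5,11}(\al)$ are never isomorphic and that the listed equivalences $\al\sim\al'$ exhaust all isomorphisms within each family.
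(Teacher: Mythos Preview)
Your overall architecture matches the paper's, and your handling of the case $\dim_F\ann(N)=2$ and of the sub-case $a_{14}=0$ is essentially the paper's argument. The problem is the final matching of your three normal forms with the three families in Table~2: it is not a ``reindexing'', and your proposed correspondence is wrong.

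Concretely, your Case~1 ($a_{14}\neq 0$) produces the algebra
\[
e_1^2=v_2+v_3+v_4,\quad v_2^2=e_5,\quad v_3^2=\alpha e_5,\quad v_4^2=-(1+\alpha)e_5,\quad e_5^2=0,
\]
with $\alpha\neq 0,-1$. This is \emph{not} any of $N_{5,10}$, $N_{5,11}$, $N_{5,12}$ under a relabelling: $e_1^2$ is a sum of three natural basis vectors, not two. The paper carries out an additional natural-basis change, setting $w_3=v_3+v_4$ and $w_4=v_3+\tfrac{\alpha}{1+\alpha}v_4$ (chosen so that $w_3w_4=0$), to obtain $e_1^2=v_2+w_3$, $v_2^2=e_5$, $w_3^2=-e_5$, $w_4^2=\tfrac{\alpha}{1+\alpha}e_5$, which is $N_{5,10}\bigl(\tfrac{\alpha}{1+\alpha}\bigr)$. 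So your Cases~1 and~2 \emph{both} land in $N_{5,10}$; they are not two distinct families of type $[1,3,1]$ to be ``separated by direct comparison''. Conversely, your Case~3 yields $e_4^2=t(v_2+v_3)+ce_5$ with $t\neq 0$ but with $c$ possibly zero: for $c=0$ this is $N_{5,11}(t)$ and for $c\neq 0$ it is $N_{5,12}(t,c)$. Thus the three families arise as: Cases~1 and~2 $\to N_{5,10}$; Case~3 with $c=0$ $\to N_{5,11}$; Case~3 with $c\neq 0$ $\to N_{5,12}$.

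This also affects your separation argument. The types are $[1,3,1]$ for $N_{5,10}$ and $[1,2,2]$ for both $N_{5,11}$ and $N_{5,12}$, so type alone does not distinguish the latter two; the paper separates them by the invariant $\dim\bigl((\U_3\oplus\U_1)^2\bigr)$, which equals $1$ for $N_{5,11}$ and $2$ for $N_{5,12}$. Your claim that ``the first two have type $[1,3,1]$'' and that they must then be shown non-isomorphic is therefore based on the mistaken case/family matching above.
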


\begin{proof}
We have $\dim(N) = 5$ leads to $\dim(\ann(N)) < \frac{1}{2}\dim(N) = 2.5$, so $\dim(\ann(N)) = 1, 2$.

1) $\dim(\ann(N)) = 1.$
\begin{eqnarray*}
e_{1}^{2} & = &  a_{12}e_{2} + a_{13}e_{3} + a_{14}e_{4} + a_{15}e_{5},
e_{2}^{2} =  a_{25}e_{5}, e_{3}^{2}  =   a_{35}e_{5}\\
e_{4}^{2} & = &  a_{42}e_{2} + a_{43}e_{3} + a_{45}e_{5},
e_{5}^{2}  =  0 \textrm{ with }a_{12}a_{13}a_{25}a_{35} \neq 0 \textrm{ and }e_{4}^{2} \neq 0.
\end{eqnarray*}
Since $0 = (e_{1}^{2}e_{4})e_{j} = a_{14}a_{4j}e_{j}^{2}$, we have $a_{14}a_{4j} = 0$ (with $j = 2, 3$). we then distinguish two cases.

1.1) $a_{14} = 0$ i.e. $e_{1}^{2} = a_{12}e_{2} + a_{13}e_{3} + a_{15}e_{5}$ with $a_{12}a_{13} \neq 0$.
Let's set $v_{2} = a_{12}e_{2} + a_{15}e_{5}$ and $v_{3} = a_{13}e_{3}$.  We have $e_{1}^{2} = v_{2} + v_{3}$ and $0 = e_{1}^{2}e_{1}^{2} = v_{2}^{2} + v_{3}^{2}$, so $v_{3}^{2} = -v_{2}^{2}$.
We set $v_{5} = v_{2}^{2} = a_{12}^{2}a_{25}e_{5}$, so $v_{3}^{2} = -v_{5}$ and there are scalars $\alpha_{42}$, $\alpha_{43}$ and $\alpha_{45}$ not all zero such that $e_{4}^{2} = \alpha_{42}v_{2} + \alpha_{43}v_{3} + \alpha_{45}v_{5}$.
We have $0 = e_{1}^{2}e_{4}^{2} = \alpha_{42}v_{2}^{2} + \alpha_{43}v_{3}^{2} = (\alpha_{42} - \alpha_{43})v_{5}$, so $\alpha_{42} = \alpha_{43}$ ; this relation ensures the power associativity of $N.$
The family $\{e_{1}, v_{2}, v_{3}, e_{4}, v_{5}\}$ is a natural basis of $N$ and its multiplication table is defined by:
\begin{eqnarray*}
e_{1}^{2} & = &  v_{2} + v_{3}, v_{2}^{2}  =   v_{5}, v_{3}^{2}  =   -v_{5},
e_{4}^{2}  =  \alpha_{42}(v_{2} + v_{3}) + \alpha_{45}v_{5}, v_{5}^{2}  =  0 \textrm{ with }(\alpha_{42}, \alpha_{45}) \neq 0.
\end{eqnarray*}
We have $\ann(N) = \big<v_{5}\big>$, $v_{2}, v_{3} \in \ann^{2}(N)$, $e_{1} \in \ann^{3}(N)$ and since $e_{4}^{2} \in \big<v_{2}, v_{3}, v_{5}\big>$ then the possible types of $N$ are $[1, 3, 1]$ or $[1, 2, 2].$

1.1.1) The type of $N$ is $[1, 3, 1]$, then $\alpha_{42} = 0$.
We have\\ $\ann(N) = \big<v_{5}\big>$, $\ann^{2}(N) = \big<v_{2}, v_{3}, e_{4}, v_{5}\big>$ and $\ann^{3}(N) = N$.
So $\U_{3}\oplus \U_{1} = \{x \in \ann^{3}(N)$ ; $x[\ann^{2}(N)] = 0\} = \big<e_{1}, v_{5}\big>$ and $(\U_{3}\oplus \U_{1})^{2} = \big<v_{2} + v_{3}\big>$. $N$ is isomorphic to
$N_{5,10}(\alpha):  e_{1}^{2}  =  e_{2} + e_{3}, e_{2}^{2} =  e_{5}, e_{3}^{2} = -e_{5}, e_{4}^{2} =  \alpha e_{5}, e_{5}^{2}  =  0$ with $\alpha \neq 0$.

1.1.2) The type of  $N$ is $[1, 2, 2]$, then $\alpha_{42} \neq 0.$

We have $\ann(N) = \big<v_{5}\big>$, $\ann^{2}(N) = \big<v_{2}, v_{3}, v_{5}\big>$ and $\ann^{3}(N) = N.$
So $\U_{3}\oplus \U_{1} = \big<e_{1}, e_{4}, v_{5}\big>$ and $(\U_{3}\oplus \U_{1})^{2} = \big<v_{2} + v_{3}, e_{4}^{2}\big>.$ Thus,
$\dim\big(\U_{3}\oplus \U_{1})^{2}\big) = 1, 2.$

$a)$ $\dim\big((\U_{3}\oplus \U_{1})^{2}\big) = 1$ implies $\alpha_{45} = 0$. $N$ is isomorphic to
 $N_{5,11}(\alpha): e_{1}^{2} =  e_{2} + e_{3}, e_{2}^{2} =  e_{5}, e_{3}^{2} =  -e_{5}, e_{4}^{2} =  \alpha(e_{2} + e_{3}),
e_{5}^{2} =  0$ with $\alpha \neq 0$.

$b)$ $\dim\big((\U_{3}\oplus \U_{1})^{2}\big) = 2$ implies $\alpha_{45} \neq 0$.
$N$ is isomorphic to
$N_{5,12}(\alpha, \beta): e_{1}^{2} = e_{2} + e_{3}, e_{2}^{2} = e_{5}, e_{3}^{2} = -e_{5},
e_{4}^{2} = \alpha(e_{2} + e_{3}) + \beta e_{5}, e_{5}^{2} = 0$ with $\alpha\beta \neq 0$.

1.2) $a_{14} \neq 0$ i.e. $e_{1}^{2} =  a_{12}e_{2} + a_{13}e_{3} + a_{14}e_{4} + a_{15}e_{5}$ and $e_{4}^{2} = a_{45}e_{5}$ with $a_{12}a_{13}a_{14}a_{45} \neq 0$.
Let's set $v_{2} = a_{12}e_{2} + a_{15}e_{5},$ $v_{3} = a_{13}e_{3}$ and $v_{4} = a_{14}e_{4}$ ; we have $e_{1}^{2} = v_{2} + v_{3} + v_{4}$ and $0 = e_{1}^{2}e_{1}^{2} = v_{2}^{2} + v_{3}^{2} + v_{4}^{2}$ implies $v_{4}^{2} = -(v_{2}^{2} + v_{3}^{2})$.
We set $v_{5} = v_{2}^{2} = a_{12}^{2}a_{25}e_{5}$. There is $\alpha \in F^{*}$ such that $v_{3}^{2} = \alpha v_{2}^{2} = \alpha v_{5}$.

The family $\{e_{1}, v_{2}, v_{3}, v_{4}, v_{5}\}$ is a natural basis of $N$ and its multiplication table is defined by: $e_{1}^{2} =  v_{2} + v_{3} + v_{4}, v_{2}^{2} =  v_{5}, v_{3}^{2} =  \alpha v_{5}, v_{4}^{2}  =  -(1 + \alpha)v_{5}$, and $v_{5}^{2} = 0$ with $\alpha(1 + \alpha) \neq 0$.

Let's show that $N$ is isomorphic to $N_{5,10}(\beta)$ for some nonzero $\beta$.
We set $w_{3} = v_{3} + v_{4}$ and $w_{4} = av_{3} + bv_{4}$ ; we have: $0 = w_{3}w_{4} = av_{3}^{2} + bv_{4}^{2} = [\alpha a - (1 + \alpha)b]v_{5}$ then $b = \frac{\alpha a}{1 + \alpha}.$ For $a = 1$, $b = \frac{\alpha}{1 + \alpha}$ and $w_{4} = v_{3} + \frac{\alpha}{1 + \alpha}v_{4}$.
The family $\{e_{1}, v_{2}, w_{3}, w_{4}, v_{5}\}$ is a natural basis of $N$ and its multiplication table is defined by:
$e_{1}^{2} =  v_{2} + w_{3}, v_{2}^{2} =  v_{5}, w_{3}^{2} = -v_{5}, w_{4}^{2} = [\alpha - \frac{\alpha^{2}}{1 + \alpha}]v_{5} = \frac{\alpha}{1 + \alpha}v_{5}, v_{5}^{2} = 0$ with $\frac{\alpha}{1 + \alpha} \neq 0$.
We deduce that $N \simeq N_{5, 10}\big(\be\big)$ where $\be=\frac{\alpha}{1 + \alpha}$.

2) $\dim(\ann(N)) = 2.$ We have
$e_{1}^{2} = a_{12}e_{2} + a_{13}e_{3} + a_{14}e_{4} + a_{15}e_{5}, e_{2}^{2} = a_{24}e_{4} + a_{25}e_{5}, e_{3}^{2} =  a_{34}e_{4} + a_{35}e_{5}, e_{4}^{2} = e_{5}^{2} = 0$ with $a_{12}a_{13} \neq 0$, $(a_{24}, a_{25}) \neq 0$ and $(a_{34}, a_{35}) \neq 0$.

We set $v_{2} = a_{12}e_{2} + a_{15}e_{5}$ and $v_{3} = a_{13}e_{3} + a_{14}e_{4}$. We have: $e_{1}^{2} = v_{2} + v_{3}$ and $0 = e_{1}^{2}e_{1}^{2} = v_{2}^{2} + v_{3}^{2}$ implies $v_{3}^{2} = -v_{2}^{2}$. Let's set $v_{4} = v_{2}^{2} = a_{12}^{2}e_{2}^{2}$, so $v_{3}^{2} = -v_{4}$.
Then $N = \big<e_{1}, v_{2}, v_{3}, v_{4}\big>\oplus\big<v_{5}\big>$ direct sum of evolution algebras where $\ann(N) = Kv_{4}\oplus Kv_{5}.$
\end{proof}

\subsubsection{Six-dimensional classification}

\begin{theo} Let $N$ be a nil indecomposable power-associative which is not associative evolution algebra of dimension six. Then $N$ is isomorphic to one and only one of the seven algebras  $N_{6,19}(\al,\be)$ to  $N_{6,26}$ in Table~3.

\end{theo}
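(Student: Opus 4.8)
The plan is to follow exactly the scheme already used for dimensions four and five: restrict $\dim(\ann(N))$, write down the generic multiplication table permitted by the structural constraints of Corollary~\ref{PAA} together with the analysis opening this subsection, then normalize the natural basis to a canonical form, separating the resulting algebras by the invariants $\ann^i(N)$, the type $[n_1,\dots,n_r]$, and the subspaces $\U_i\oplus\U_1$ with the dimensions of their squares. Since $\dim(N)=6$ and $N$ is indecomposable, the lemma of \cite{Elduque2016} gives $\dim(\ann(N))<3$, hence $\dim(\ann(N))\in\{1,2\}$; moreover, by the theorem above, $N$ has nil-index $4$ and $N^4=0$. From the opening of this subsection we may take $i_0=1$, $j_0=2$, $j_1=3$, so that $e_1^2e_2\neq0$, $e_1^2e_3\neq0$, $a_{12}a_{13}\neq0$, $e_2^2,e_3^2\in\ann(N)\setminus\{0\}$, and the first column of the structural matrix is zero.

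\textbf{Case $\dim(\ann(N))=1$.} Writing $\ann(N)=Ke_6$ one has $e_2^2=a_{26}e_6$, $e_3^2=a_{36}e_6$ with $a_{26}a_{36}\neq0$, $e_1^2=a_{12}e_2+a_{13}e_3+a_{14}e_4+a_{15}e_5+a_{16}e_6$, while the positions of $e_4^2$ and $e_5^2$ remain to be determined. Part $2)$ of Corollary~\ref{PAA} applied to $(e_1^2e_4)e_j$ and $(e_1^2e_5)e_j$ forces $a_{14}a_{4j}=a_{15}a_{5j}=0$, and $(e_4^2e_5)e_j=0$, $e_4^2e_5^2=0$ give the remaining restrictions, exactly as in the five-dimensional argument but with one extra generator. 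Absorbing the $e_6$-components into new vectors $v_2,v_3$ (and $v_4,v_5$ where appropriate), using $e_1^2e_1^2=0$ to obtain one scalar identity among the structure constants, and rescaling, the table collapses to a short list. Splitting according to whether $a_{14}$ (resp. $a_{15}$) vanishes, whether $e_4^2$ and $e_5^2$ lie in $\ann^2(N)$, and according to $\dim\big((\U_3\oplus\U_1)^2\big)$, one obtains the algebras of Table~3 with one-dimensional annihilator.

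\textbf{Case $\dim(\ann(N))=2$.} Now $\ann(N)=Ke_5\oplus Ke_6$; after the substitution $v_2=a_{12}e_2+(\ann\text{-part})$, $v_3=a_{13}e_3+(\ann\text{-part})$ one has $e_1^2=v_2+v_3$, and $e_1^2e_1^2=0$ gives $v_3^2=-v_2^2$; put $v_4=v_2^2\in\ann(N)$. The issue is then to decide how the squares of the remaining generators $e_4$ (and possibly $e_5$ viewed modulo $v_4$) distribute over the two annihilator directions, and whether $N$ splits as $\langle e_1,v_2,v_3,v_4\rangle\oplus(\text{rest})$, which indecomposability forbids—in contrast with the five-dimensional case where precisely this splitting occurred. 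Reducing the resulting $2\times2$ coefficient configurations, as in the associative six-dimensional proof (distinguishing according to the vanishing of the relevant determinants and products), and reading off the type and the $\U_i\oplus\U_1$, yields the remaining algebras of the list.

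Finally one verifies that the seven tables so obtained are pairwise non-isomorphic, by comparing the type together with the invariants $\ann^i(N)$, $\U_i\oplus\U_1$ and $\dim\big((\U_i\oplus\U_1)^2\big)$ used to separate them, and that each is indecomposable; this makes the list exhaustive and irredundant. The main obstacle is the bookkeeping in the case $\dim(\ann(N))=2$: the number of sub-cases multiplies (distribution of the squares over the two annihilator lines, vanishing or not of several $2\times2$ determinants), and one must check simultaneously that every basis change used is again a natural basis and that the surviving parameters $\al,\be$ are genuine moduli, i.e. cannot be eliminated by a further change of natural basis.
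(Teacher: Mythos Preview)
Your outline follows the paper's proof essentially verbatim: split on $\dim(\ann(N))\in\{1,2\}$, normalize via $v_2,v_3$ and the relation $e_1^2e_1^2=0$, then separate by type and by $\dim\big((\U_3\oplus\U_1)^2\big)$. So the approach is correct.

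However, your assessment of where the work lies is inverted, and this matters because your sketch glosses over the step that actually requires a non-obvious idea. In the paper the case $\dim(\ann(N))=2$ is the \emph{short} one: only $e_4$ remains beyond $e_1,e_2,e_3$ (your parenthetical ``and possibly $e_5$ viewed modulo $v_4$'' is off, since $e_5,e_6$ span the annihilator), and the subcases hinge simply on whether $\{v_5,w\}$ (resp.\ $\{v_5,v_3^2\}$) is linearly independent, yielding $N_{6,25}$, $N_{6,26}$, or a decomposable algebra. There are no $2\times2$ determinant conditions here.

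The real bookkeeping is in $\dim(\ann(N))=1$, which splits into four subcases according to $(a_{14},a_{15})$. Subcase $a_{14}=a_{15}=0$ already produces $N_{6,19}$--$N_{6,24}$. The subcases with $a_{14}\neq0$ or $a_{15}\neq0$ do \emph{not} produce new algebras: one must exhibit explicit changes of natural basis reducing them to $N_{6,19}$, $N_{6,20}$ or $N_{6,21}$. The delicate point is $a_{14}a_{15}\neq0$: here $e_1^2=v_2+v_3+v_4+v_5$ with four nonzero squares summing to zero, and one has to manufacture a new natural basis $\{e_1,v_2,\,v_3{+}v_4{+}v_5,\,x,\,y,\,v_6\}$ with $x,y$ orthogonal to $v_3+v_4+v_5$ and to each other. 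Solving this orthogonality system (and checking it is solvable for all admissible $\alpha,\beta$, including the degenerate $\alpha=\beta=-1$) is the one place where a concrete computation is unavoidable; your sketch does not indicate how you would handle it.
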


\begin{proof}Since  $\dim(N) = 6$, we have $\dim(\ann(N)) < \frac{1}{2}\dim(N) = 3$ and  $\dim(\ann(N)) = 1, 2.$

1) $\dim(\ann(N)) = 1.$
\begin{eqnarray*}
e_{1}^{2} & = &  a_{12}e_{2} + a_{13}e_{3} + a_{14}e_{4} + a_{15}e_{5} + a_{16}e_{6}, e_{2}^{2} = a_{26}e_{6}, e_{3}^{2} = a_{36}e_{6}\\
e_{4}^{2} & = &  a_{42}e_{2} + a_{43}e_{3} + a_{45}e_{5} + a_{46}e_{6}, e_{5}^{2} =  a_{52}e_{2} + a_{53}e_{3} + a_{54}e_{4} + a_{56}e_{6}\\
e_{6}^{2} & = &  0 \textrm{ with }a_{12}a_{13}a_{26}a_{36} \neq 0  \textrm{, }e_{4}^{2} \neq 0 \textrm{ and }e_{5}^{2} \neq 0.
\end{eqnarray*}
Since $0 = (e_{1}^{2}e_{4})e_{i} = a_{14}a_{4i}e_{i}^{2}$, then $a_{14}a_{4i} = 0$ (with $i = 2, 3, 5$) and  $0 = (e_{1}^{2}e_{5})e_{j} = a_{15}a_{5j}e_{j}^{2}$ implies $a_{15}a_{5j} = 0$ (with $j = 2, 3, 4$),
we then distinguish four cases:

1.1) $a_{14} = a_{15} = 0$ i.e. $e_{1}^{2} = a_{12}e_{2} + a_{13}e_{3} + a_{16}e_{6}$.
We set $v_{2} = a_{12}e_{2} + a_{16}e_{6}$ and $v_{3} = a_{13}e_{3}$. We have $e_{1}^{2} = v_{2} + v_{3}$ and
$0 = e_{1}^{2}e_{1}^{2} = v_{2}^{2} + v_{3}^{2}$
implies $v_{3}^{2} = -v_{2}^{2}$.
Let's set $v_{6} = v_{2}^{2} = a_{12}^{2}a_{26}e_{6}$ ; so $v_{3}^{2} = -v_{6}$ and there are scalars $\alpha_{42}, \alpha_{43}, \alpha_{45}, \alpha_{46}$
not all zero such that $e_{4}^{2} = \alpha_{42}v_{2} + \alpha_{43}v_{3} + \alpha_{45}e_{5} + \alpha_{46}v_{6}$. Similarly, there are scalars
  $\alpha_{52}, \alpha_{53}, \alpha_{54}, \alpha_{56}$ not all zero such that
   $e_{5}^{2} = \alpha_{52}v_{2} + \alpha_{53}v_{3} + \alpha_{54}e_{4} + \alpha_{56}v_{6}$. Thus, we have
\begin{eqnarray*}
0 & = & e_{1}^{2}e_{4}^{2} = \alpha_{42}v_{2}^{2} + \alpha_{43}v_{3}^{2} = (\alpha_{42} - \alpha_{43})v_{6} \Longrightarrow \alpha_{42} = \alpha_{43}.\\
0 & = & e_{1}^{2}e_{5}^{2} = \alpha_{52}v_{2}^{2} + \alpha_{53}v_{3}^{2} = (\alpha_{52} - \alpha_{53})v_{6} \Longrightarrow \alpha_{52} = \alpha_{53}.\\
0 & = & e_{4}^{2}e_{4}^{2} = (\alpha_{42}^{2} - \alpha_{43}^{2})v_{6} + \alpha_{45}e_{5}^{2} = \alpha_{45}e_{5}^{2} \Longrightarrow \alpha_{45} = 0.\\
0 & = & e_{5}^{2}e_{5}^{2} = (\alpha_{52}^{2} - \alpha_{53}^{2})v_{6} + \alpha_{54}e_{4}^{2} = \alpha_{54}e_{4}^{2} \Longrightarrow \alpha_{54} = 0.
\end{eqnarray*}
These relations ensure power associativity of $N.$
The family $\{e_{1}, v_{2}, v_{3}, e_{4}, e_{5}, v_{6}\}$ is a natural basis of $N$ and its multiplication table is given by:
$e_{1}^{2} =  v_{2} + v_{3}, v_{2}^{2} =  v_{6}, v_{3}^{2} = -v_{6},
e_{4}^{2} = \alpha_{42}(v_{2} + v_{3}) + \alpha_{46}v_{6}, e_{5}^{2} = \alpha_{52}(v_{2} + v_{3}) + \alpha_{56}v_{6},
v_{6}^{2} =  0$ with $(\alpha_{42}, \alpha_{46}) \neq 0$ and $(\alpha_{52}, \alpha_{56}) \neq 0$.

We have: $\ann(N) = \big<v_{6}\big>$, $v_{2}, v_{3} \in \ann^{2}(N)$ and $e_{1} \in \ann^{3}(N)$.
Since $e_{4}^{2}, e_{5}^{2} \in \big<v_{2}, v_{3}, v_{6}\big>$
then the possible types of $N$ are: $[1, 4, 1]$, $[1, 3, 2]$ or $[1, 2, 3].$

1.1.1) Type $[1, 4, 1]$ leads to $\alpha_{42} = \alpha_{52} = 0$. we have $\ann^{2}(N) = \big<v_{2}, v_{3}, e_{4}, e_{5}, v_{6}\big>$ and $\ann^{3}(N) = N$.
So $\U_{3}\oplus \U_{1} = \big<e_{1}, v_{6}\big>$ and $(\U_{3}\oplus \U_{1})^{2} = \big<v_{2} + v_{3}\big>$.

$N \simeq N_{6,19}(\alpha, \beta): e_{1}^{2} =  e_{2} + e_{3}, e_{2}^{2} = e_{6}, e_{3}^{2} = -e_{6}, e_{4}^{2} = \alpha e_{6}, e_{5}^{2} =
  \beta e_{6}, e_{6}^{2} =  0$ with $\alpha, \beta \in F^{*}$.

1.1.2) Type $[1, 3, 2]$ leads to $\alpha_{42} \neq 0$ and $\alpha_{52} = 0$ or $\alpha_{42} = 0$ and $\alpha_{52} \neq 0.$

$a)$ For $\alpha_{42} \neq 0$ and $\alpha_{52} = 0$, we have: $\ann^{2}(N) = \big<v_{2}, v_{3}, e_{5}, v_{6}\big>$ and $\ann^{3}(N) = N.$
So $\U_{3}\oplus \U_{1} = \big<e_{1}, e_{4}, v_{6}\big>$ and $\big(\U_{3}\oplus \U_{1}\big)^{2} = \big<v_{2} + v_{3}, e_{4}^{2}\big>$. Thus
$\dim\big((\U_{3}\oplus \U_{1})^{2}\big) = 1, 2$.

\begin{itemize}
  \item $\dim\big((\U_{3}\oplus \U_{1})^{2}\big) = 1 \Longrightarrow \alpha_{46} = 0.$\\
$N \simeq N_{6,20}(\alpha, \beta): e_{1}^{2} =  e_{2} + e_{3}, e_{2}^{2} =  e_{6}, e_{3}^{2} = -e_{6}, e_{4}^{2} = \alpha(e_{2} + e_{3}),
e_{5}^{2}= \beta e_{6}, e_{6}^{2} = 0$ with $\alpha, \beta \in F^{*}$.
  \item $\dim\big((\U_{3}\oplus \U_{1})^{2}\big) = 2 \Longrightarrow \alpha_{46} \neq 0.$\\
$N \simeq N_{6,21}(\alpha, \beta, \gamma): e_{1}^{2} =  e_{2} + e_{3}, e_{2}^{2} = e_{6}, e_{3}^{2} =  -e_{6},
e_{4}^{2} =  \alpha(e_{2} + e_{3}) + \beta e_{6}, e_{5}^{2} =  \gamma e_{6}, e_{6}^{2} =  0$ with $\alpha, \beta, \gamma \in F^{*}$.
\end{itemize}

$b)$ For $\alpha_{42} = 0$ and $\alpha_{52} \neq 0$, we find again case $a)$ by permuting vectors $e_{4}$ and $e_{5}$.

1.1.3) Type $[1, 2, 3]$ leads to $\alpha_{42}\alpha_{52} \neq 0$. We have $\ann^{2}(N) = \big<v_{2}, v_{3}, v_{6}\big>$ and $\ann^{3}(N) = N$.
So $\U_{3}\oplus \U_{1} = \big<e_{1}, e_{4}, e_{5}, v_{6}\big>$ and $\big(\U_{3}\oplus \U_{1}\big)^{2} = \big<v_{2} + v_{3}, e_{4}^{2}, e_{5}^{2}\big>.$
Thus, $\dim\big((\U_{3}\oplus \U_{1})^{2}\big) = 1, 2.$

\begin{itemize}
  \item $\dim\big((\U_{3}\oplus \U_{1})^{2}\big) = 1 \Longrightarrow \alpha_{46} = \alpha_{56} = 0$.\\
$N \simeq N_{6,22}(\alpha, \beta): e_{1}^{2} =  e_{2} + e_{3},
e_{2}^{2} =  e_{6}, e_{3}^{2} =  -e_{6}, e_{4}^{2} =  \alpha(e_{2} + e_{3}),
e_{5}^{2} = \beta(e_{2} + e_{3}),
e_{6}^{2} =  0$ with  $\alpha, \beta \in F^{*}$.
  \item $\dim\big((\U_{3}\oplus \U_{1})^{2}\big) = 2 \Longrightarrow (\alpha_{46}, \alpha_{56}) \neq 0$.
We have $N(\alpha, \beta, \gamma, \delta): e_{1}^{2} =  e_{2} + e_{3}, e_{2}^{2} =  e_{6},
e_{3}^{2} =  -e_{6}, e_{4}^{2} =  \alpha(e_{2} + e_{3}) + \beta e_{6}, e_{5}^{2} =  \gamma(e_{2} + e_{3}) + \delta e_{6}, e_{6}^{2} = 0$
with $\alpha\gamma \neq 0$ and $(\beta, \delta) \neq 0$.\\
We distinguish four cases:
\begin{description}
 \item[$i)$] $\be\de\ne0$ and $\al\de-\be\g\ne0$ gives algebra $N_{6,23}(\al,\be,\g,\de)$.
\item[$ii)$] $\delta=0$ and $\beta\ne0$ gives $N\simeq N_{6,24}(\alpha, \beta, \gamma)$.
 \item[$iii)$] $\beta=0$ and $\delta\ne0$. By permuting $e_4$ and $e_5$ we come back to case $ii)$.
 \item[$iv)$] $\beta\delta\ne0$ and $\al\de-\be\g=0$. Since $\de=\al^{-1}\be\g$, we have $e_4^2=\al(e_2+e_3+\al^{-1}\be e_6)$ and
  $e_5^2=\g(e_2+e_3+\al^{-1}\be e_6)$. Let's do the basis change $w_{1} = e_{4}$, $w_{2} = \alpha e_{2}$, $w_{3} = \alpha e_{3} + \beta e_{6}$,  $w_{4} = e_{1}$, $w_{5} = e_{5}$ and $w_{6} = \alpha^{2}e_{6}$. In the natural basis $\{w_1,w_2,w_3,w_4,w_5,w_6\}$ the multiplication table leads to
   $N_{6,24}(\al^{-1},-\al^{-3}\be,\al^{-1}\g)$.
\end{description}
\end{itemize}

1.2) $a_{14} \neq 0$ and $a_{15} = 0$ i.e. $e_{1}^{2} = a_{12}e_{2} + a_{13}e_{3} + a_{14}e_{4} + a_{16}e_{6}$ and $e_{4}^{2} = a_{46}e_{6}$ with $a_{12}a_{13}a_{14}a_{46} \neq 0$.
Let's set $v_{2} = a_{12}e_{2} + a_{16}e_{6}$ ; $v_{3} = a_{13}e_{3}$ and $v_{4} = a_{14}e_{4}$ ; we have: $e_{1}^{2} = v_{2} + v_{3} + v_{4}$ and $0 = e_{1}^{2}e_{1}^{2} = v_{2}^{2} + v_{3}^{2} + v_{4}^{2}$ then $v_{4}^{2} = -(v_{2}^{2} + v_{3}^{2})$.
Let's set $v_{6} = v_{2}^{2} = a_{12}^{2}a_{26}e_{6}$. There is $\alpha \in F^{*}$ such that $v_{3}^{2} =  \alpha v_{2}^{2} = \alpha v_{6}$. Thus, $v_{4}^{2} = -(1 + \alpha)v_{6}$ with $1 + \alpha \neq 0.$\\
There are scalars $\alpha_{52}, \alpha_{53}, \alpha_{54}, \alpha_{56}$ not all zero such that $e_{5}^{2} = \alpha_{52}v_{2} + \alpha_{53}v_{3} + \alpha_{54}v_{4} + \alpha_{56}v_{6}.$\\
We have $0 = e_{1}^{2}e_{5}^{2} = [\alpha_{52} + \alpha\alpha_{53} - (1 + \alpha)\alpha_{54}]v_{6}$ implies $\alpha_{52} = -\alpha\alpha_{53} + (1 + \alpha)\alpha_{54}$.\\ We have
$0 = e_{5}^{2}e_{5}^{2} = [\alpha_{52}^{2} + \alpha\alpha_{53}^{2} - (1 + \alpha)\alpha_{54}^{2}]v_{6}$ leads to $0 = \alpha_{52}^{2} + \alpha\alpha_{53}^{2} - (1 + \alpha)\alpha_{54}^{2}$. We have:
$\alpha_{52}^{2} + \alpha\alpha_{53}^{2} - (1 + \alpha)\alpha_{54}^{2} = (1 + \alpha)^{2}\alpha_{54}^{2} + \alpha^{2}\alpha_{53}^{2} - 2\alpha(1 + \alpha)\alpha_{53}\alpha_{54} + \alpha\alpha_{53}^{2} - (1 + \alpha)\alpha_{54}^{2}
 = \alpha(1 + \alpha)(\alpha_{54} - \alpha_{53})^{2}$.
So, $0 = \alpha_{52}^{2} + \alpha\alpha_{53}^{2} - (1 + \alpha)\alpha_{54}^{2} = \alpha(1 + \alpha)(\alpha_{54} - \alpha_{53})^{2} \Longrightarrow \alpha_{54} = \alpha_{53}$.
Moreover $\alpha_{52} = (1 + \alpha)\alpha_{54} - \alpha\alpha_{53} = \alpha_{54}$. Thus,
\begin{eqnarray*}
e_{1}^{2} & = &  v_{2} + v_{3} + v_{4},\quad v_{2}^{2} =  v_{6},\quad v_{3}^{2} = \alpha v_{6},\quad
v_{4}^{2} = -(1 + \alpha)v_{6}\\
e_{5}^{2} & = &  \alpha_{52}(v_{2} + v_{3} + v_{4}) + \alpha_{56}v_{6},\quad
v_{6}^{2} =  0 \textrm{ with } \alpha, 1 + \alpha \in F^{*} \textrm{ and }(\alpha_{52}, \alpha_{56}) \neq 0.
\end{eqnarray*}

Let's show that $N$ is isomorphic to one of the following algebras:  $N_{6, 18}(\alpha', \beta')$, $N_{6,20}(\alpha', \beta')$,  $N_{6,21}(\alpha', \beta', \gamma')$ for some $(\alpha', \beta', \gamma')$ with $\alpha', \beta', \gamma' \in F^{*}.$

Let's set $w_{3} = v_{3} + v_{4},$ $w_{4} = e_{5}$ and $w_{5} = av_{3} + bv_{4}$ ; we have: $0 = w_{3}w_{5} = \big(\alpha a - (1 + \alpha)b\big)v_{6}$ implies $b = \frac{\alpha a}{1 + \alpha}$.
For $a = 1,$ $b = \frac{\alpha }{1 + \alpha}$ and $w_{5} = v_{3} + \frac{\alpha }{1 + \alpha}v_{4}.$

The family $\{e_{1}, v_{2}, w_{3}, w_{4}, w_{5}, v_{6}\}$ is a natural basis of $N$ and its multiplication table is defined by: $e_{1}^{2}= v_{2} + w_{3}, v_{2}^{2} =  v_{6}, w_{3}^{2} =  -v_{6}, w_{4}^{2} = \alpha_{52}(v_{2} + w_{3}) + \alpha_{56}v_{6}, w_{5}^{2} = [\alpha - \frac{\alpha^{2}}{1 + \alpha}]v_{6} = [\frac{\alpha}{1 + \alpha}]v_{6}, v_{6}^{2} =  0$ with $\frac{\alpha}{1 + \alpha} \neq 0$ and $(\alpha_{52}, \alpha_{56}) \neq 0$.

For $\alpha_{52} = 0$, then $N \simeq N_{6,19}(\alpha_{56}, \frac{\alpha}{1 + \alpha})$ with $\alpha_{56}, \frac{\alpha}{1 + \alpha} \in F^{*}$.

For $\alpha_{56} = 0$, then $N \simeq N_{6,20}(\alpha_{52}, \frac{\alpha}{1 + \alpha})$ with $\alpha_{56}, \frac{\alpha}{1 + \alpha} \in F^{*}$.

For $\alpha_{52}\alpha_{56} \neq 0$, then $N \simeq N_{6,21}(\alpha_{52}, \alpha_{56}, \frac{\alpha}{1 + \alpha})$ with $\alpha_{52}, \alpha_{56}, \frac{\alpha}{1 + \alpha} \in F^{*}$.

1.3) $a_{14} = 0$ and $a_{15} \neq 0$ i.e. $e_{1}^{2} = a_{12}e_{2} + a_{13}e_{3} + a_{15}e_{5} + a_{16}e_{6}$ and $e_{5}^{2} = a_{56}e_{6}$ with $a_{12}a_{13}a_{15}a_{56} \neq 0$.
By permuting the vectors $e_{4}$ and $e_{5}$ of natural basis, we find again case 1.2

1.4) $a_{14}a_{15} \neq 0$ i.e. $e_{1}^{2} = a_{12}e_{2} + a_{13}e_{3} + a_{14}e_{4} + a_{15}e_{5} + a_{16}e_{6},$ $e_{4}^{2} = a_{46}e_{6}$ and $e_{5}^{2} = a_{56}e_{6}$ with $a_{12}a_{13}a_{14}a_{15}a_{46}a_{56} \neq 0$. Let's set $v_{2} = a_{12}e_{2} + a_{16}e_{6},$ $v_{3} = a_{13}e_{3},$ $v_{4} = a_{14}e_{4}$ and $v_{5} = a_{15}e_{5}$ ; we have: $e_{1}^{2} = v_{2} + v_{3} + v_{4} + v_{5}$ and $0 = e_{1}^{2}e_{1}^{2} = v_{2}^{2} + v_{3}^{2} + v_{4}^{2} + v_{5}^{2}$ implies $v_{5}^{2} = -(v_{2}^{2} + v_{3}^{2} + v_{4}^{2}).$\\
Let's set $v_{6} = v_{2}^{2} = a_{12}^{2}a_{26}e_{6}$. There are $\alpha, \beta \in F^{*}$ such that $v_{3}^{2} = \alpha v_{6}$ and $v_{4}^{2} = \beta v_{6}.$
Thus, $v_{5}^{2} = -(1 + \alpha + \beta)v_{6}$ with $1 + \alpha + \beta \neq 0.$\\
The family $\{e_{1}, v_{2}, v_{3}, v_{4}, v_{5}, v_{6}\}$ is a natural basis of $N$ and its multiplication table is defined by:
$e_{1}^{2} =  v_{2} + v_{3} + v_{4} + v_{5}, v_{2}^{2} =  v_{6}, v_{3}^{2} =  \alpha v_{6}, v_{4}^{2} =  \beta v_{6}, v_{5}^{2} =  -(1 + \alpha + \beta)v_{6}, v_{6}^{2} = 0$ with $\alpha\beta(1 + \alpha + \beta) \neq 0$.

Let's show that there are vectors $x = x_{3}v_{3} + x_{4}v_{4} + x_{5}v_{5}$ and $y = y_{3}v_{3} + y_{4}v_{4} + y_{5}v_{5}$ such that the family $\{e_{1}, v_{2}, v_{3} + v_{4} + v_{5}, x, y, v_{6}\}$ is a natural basis of $N$. we set $w_{3} =  v_{3} + v_{4} + v_{5}$.
The family $\{x,y\}$ is orthogonal to $w_3$ if and only if
\begin{eqnarray}\label{Eq10}
x_{5} &=& (1 + \alpha + \beta)^{-1}(x_{3}\alpha + x_{4}\beta),\\
y_{5} &=& (1 + \alpha + \beta)^{-1}(y_{3}\alpha + y_{4}\beta).
\end{eqnarray}
Then $x$ and $y$ are written
\begin{eqnarray*}
 x &=& x_3[v_3+\al(1+\al+\be)^{-1}v_5]+x_4[v_4+\be(1+\al+\be)^{-1}v_5],\\
 y &=& y_3[v_3+\al(1+\al+\be)^{-1}v_5]+y_4[v_4+\be(1+\al+\be)^{-1}v_5].
\end{eqnarray*}
The orthogonality of $x$ and $y$ leads to $\al(1+\be)x_3y_3+\be(1+\al)x_4y_4-\al\be(x_3y_4+x_4y_3)=0$, so
\begin{equation}\label{Eq12}
 x_3\al[(1+\be)y_3-\be y_4]+x_4\be[(1+\al)y_4-\al y_3]=0.
\end{equation}

We distinguish three cases :
\begin{description}
 \item[$i)$] if $\al+1\ne 0$, by taking $x_3=0$ and $y_3=\al^{-1}(1+\al)y_4$, it comes that
 \begin{eqnarray*}
  x &=&x_4[v_4+\be(1+\al+\be)^{-1}v_5],\\
  y &=& y_4[\al^{-1}(1+\al)v_3+v_4+v_5].
 \end{eqnarray*}
 \item[$ii)$] If $\be+1\ne 0$, by taking $x_4=0$ et $y_4=\be^{-1}(1+\be)y_3$, it comes that
\begin{eqnarray*}
  x &=&x_3[v_3+\al(1+\al+\be)^{-1}v_5],\\
  y &=& y_3[v_3+ \be^{-1}(1+\be)v_4+v_5].
 \end{eqnarray*}
\end{description}
\item[$iii)$] $\al=\be=-1$. Identity \eqref{Eq12} leads to $x_3y_4+x_4y_3=0$ and we must have $x_3y_4-x_4y_3=2x_3y_4\ne0$. Then
\begin{eqnarray*}
  x &=&x_3[v_3+v_5]+x_4[v_4+v_5],\\
  y &=& y_3[v_3+v_5]+y_4[v_4+v_5].
 \end{eqnarray*}

In each of the three cases, there are $\alpha', \beta' \in F^{*}$ such that $x^{2} = \alpha'v_{6}$ and $y^{2} = \beta'v_{6}$. Thus, the multiplication table of the natural basis $\{e_{1}, v_{2}, v_{3} + v_{4} + v_{5}, x, y, v_{6}\}$ of $N$ is given by:
$e_{1}^{2} = v_{2} + w_{3}$, $v_{2}^{2} = v_{6}$, $w_{3}^{2} = -v_{6}$, $x^{2} = \alpha'v_{6}$, $y^{2} = \beta'v_{6}, v_{6}^{2} = 0$ with $\alpha'\beta' \neq 0$. So
$N$ is isomorphic to $N_{6,19}(\alpha', \beta')$.

2) $\dim(\ann(N)) = 2.$ We have:
\begin{eqnarray*}
e_{1}^{2} & = &  a_{12}e_{2} + a_{13}e_{3} + a_{14}e_{4} + a_{15}e_{5} + a_{16}e_{6}, \quad e_{2}^{2} = a_{25}e_{5} + a_{26}e_{6},\\
e_{3}^{2} & = &  a_{35}e_{5} + a_{36}e_{6}, \quad e_{4}^{2} = a_{42}e_{2} + a_{43}e_{3} + a_{45}e_{5} + a_{46}e_{6},\\
e_{5}^{2} & = &  0, \quad e_{6}^{2} =  0 \textrm{ with } a_{12}a_{13} \neq 0 \textrm{ ; }(a_{25}, a_{26}) \neq 0 \textrm{ ; }(a_{35}, a_{36}) \textrm{ and }e_{4}^{2} \neq 0.
\end{eqnarray*}
We have $0 = (e_{1}^{2}e_{4})e_{2} = a_{14}a_{42}e_{2}^{2}$ and $0 = (e_{1}^{2}e_{4})e_{3} = a_{14}a_{43}e_{3}^{2}$, so $a_{14}a_{42} = a_{14}a_{43} = 0.$

We then distinguish two cases

2.1) $a_{14} = 0$ i.e. $e_{1}^{2} = a_{12}e_{2} + a_{13}e_{3} + a_{15}e_{5} + a_{16}e_{6}$ with $a_{12}a_{13} \neq 0.$
Let's set $v_{2} = a_{12}e_{2} + a_{15}e_{5} + a_{16}e_{6}$ and $v_{3} = a_{13}e_{3}$, we have  $e_{1}^{2} = v_{2} + v_{3}$ and $0 = e_{1}^{2}e_{1}^{2} = v_{2}^{2} + v_{3}^{2}$ leads to  $v_{3}^{2} = -v_{2}^{2}.$\\
Let's set $v_{5} = v_{2}^{2} = a_{12}^{2}e_{2}^{2}$. So $v_{3}^{2} = -v_{5}$ and there are scalars $\alpha,$ $\beta,$ $\gamma$ et $\delta$ not all zero such that $e_{4}^{2} = \alpha v_{2} + \beta v_{3} + \gamma v_{5} + \delta e_{6}$. By setting $w = \gamma v_{5} + \delta e_{6}$, we have $e_{4}^{2} = \alpha v_{2} + \beta v_{3} + w$.

Moreover $0 = e_{1}^{2}e_{4}^{2} = \alpha v_{2}^{2} + \beta v_{3}^{2} = (\alpha - \beta)v_{5}$ leads to $\alpha = \beta$. This relation ensure the power associativity of $N$.

2.1.1) The family $\{v_{5}, w\}$ is linearly independent.
In that case, the family $\{e_{1}, v_{2}, v_{3}, e_{4}, v_{5}, w\}$ is a natural basis of $N$ and its multiplication table is defined by \begin{eqnarray*}
e_{1}^{2} & = &  v_{2} + v_{3}, v_{2}^{2} =  v_{5}, v_{3}^{2} = -v_{5},\\
e_{4}^{2} & = &  \alpha(v_{2} + v_{3}) + w, v_{5}^{2} = 0, w^{2} = 0 \textrm{ with } \alpha \in F.
\end{eqnarray*}

If $\alpha = 0$, then $N = \big<e_{1}, v_{2}, v_{3}, v_{5}\big>\oplus\big<e_{4}, w\big>$, direct sum of evolution algebras.

We deduce that $\alpha \neq 0$ and $N$ is isomorphic to

$N_{6,25}(\alpha): e_{1}^{2} =  e_{2} + e_{3}$, $e_{2}^{2} = e_{5}$
$e_{3}^{2} =  -e_{5}$, $e_{4}^{2} = \alpha(e_{2} + e_{3}) + e_{6}$, $e_{5}^{2} = 0$, $e_{6}^{2} = 0$ with $\alpha \in F^{*}$ and its type is $[2, 2, 2]$.
We have $\ann(N) = \big<e_{5}, e_{6}\big>$ ; $\ann^{2}(N) = \big<e_{2}, e_{3}, e_{5}, e_{6}\big>$ and $\ann^{3}(N) = N$.
So, $\U_{3}\oplus U_{1} = \big<e_{1}, e_{4}, e_{6}\big>$ and $(U_{3}\oplus U_{1})^{2} = \big<e_{2} + e_{3}, e_{6}\big>.$

2.1.2) The family $\{v_{5}, w\}$ is linearly dependent. Then, there is $\gamma \in F$ such that $w = \gamma v_{5}$. In this case $N = \big<e_{1}, v_{2}, v_{3}, v_{5}, e_{4}\big>\oplus\big<v_{6}\big>$ direct sum of evolution algebras where $\ann(N) = Kv_{5}\oplus Kv_{6}$.

2.2) $a_{14} \neq 0$ i.e. $e_{1}^{2} = a_{12}e_{2} + a_{13}e_{3} + a_{14}e_{4} + a_{15}e_{5} + a_{16}e_{6}$ and $e_{4}^{2} = a_{45}e_{5} + a_{46}e_{6}$ with $a_{12}a_{13}a_{14} \neq 0$ and $(a_{45}, a_{46}) \neq 0.$
We set $v_{2} = a_{12}e_{2} + a_{15}e_{5} + a_{16}e_{6}$, $v_{3} = a_{13}e_{3}$ and $v_{4} = a_{14}e_{4}$.
we have $e_{1}^{2} = v_{2} + v_{3} + v_{4}$ and $0 = e_{1}^{2}e_{1}^{2} =  v_{2}^{2} + v_{3}^{2} + v_{4}^{2}$ leads to $v_{4}^{2} = -(v_{2}^{2} + v_{3}^{2})$ and we set $v_{5} = v_{2}^{2} = a_{12}^{2}e_{2}^{2}.$

2.2.1) The family $\{v_{5}, v_{3}^{2}\}$ is linearly independent. By setting $v_{6} = v_{3}^{2} = a_{13}^{2}e_{3}^{2}$, we have
$N\simeq N_{6,26}: e_{1}^{2} =  e_{2} + e_{3} + e_{4}$, $e_{2}^{2} = e_{5}$, $e_{3}^{2} =  e_{6}$, $e_{4}^{2} =  -(e_{5} + e_{6})$, $e_{5}^{2} =  0$, $e_{6}^{2}= 0$ and its type is  $[2, 3, 1]$.
We have: $\ann(N) = \big<e_{5}, e_{6}\big>$ ; $\ann^{2}(N) = \big<e_{2}, e_{3}, e_{4}, e_{5}, e_{6}\big>$ and $\ann^{3}(N) = N.$
So, $\U_{3}\oplus \U_{1} = \big<e_{1}, e_{6}\big>$ and $(\U_{3}\oplus \U_{1})^{2} = \big<e_{2} + e_{3} + e_{4}\big>.$

2.2.2) The family is $\{v_{5}, v_{3}^{2}\}$ is linearly dependent. There is $\alpha \in F$ such that $v_{3}^{2} = \alpha v_{5}.$\\
We have $v_{4}^{2} = -(1 + \alpha)v_{5}$ and $N = \big<e_{1}, v_{2}, v_{3}, v_{4}, v_{5}\big>\oplus\big<v_{6}\big>$ direct sum of evolution algebras where $\ann(N) = Kv_{5}\oplus Kv_{6}.$
\end{proof}

\bigskip
\subsection{General classification}

Let $\E_{i,j}$ be the $j$-th power-associative evolution algebra of dimension $i$ over the commutative fields $F.$ If $\E_{i,j}$ is nilalgebra, we denote $N_{i,j}$. According to Definition~4.0.1 and Theorem~\ref{Wedder}, we determine $\E_{i,j}$ for $1 \leq i \leq 6$. Moreover, $\E_{ss}$ denotes the semisimple component in the Wedderburn decomposition of $\E_{i,j}$ where $s$ is the number of idempotents pairwise orthogonal.

\begin{prop}
Let $\E$ be a power-associative evolution algebra of dimension $\leq 4$. Then $\E$ is isomorphic to one and only one of the algebras in Table $1$.
{\footnotesize
\begin{longtable}[c]{|p{6mm}|c|p{6,3cm}|c|c|}
 \caption*{\underline{\textbf{Table 1:}\label{table1}} $\dim(\E)\leq 4$}\\
  \hline
$\dim$&  {$\E$} &   {\small Multiplication} &   {\small Type} &   {\small Associative}  \\\hline
\endfirsthead
 \hline
 $\dim$&  {$\E$} &   {\small Multiplication} &   {\small Type} &   {\small Associative}  \\\hline
\endhead \hline
\endfoot
\multirow{2}{6mm}{$\mathbf{1}$}&  {$N_{1,1}$} &   {$e_{1}^{2} = 0$} &   {$[1]$} &   {\small Yes}  \\\cline{2-5}
  &{$\E_{1,2} = \E_{11}$} &   {$e_{1}^{2} = e_{1}$} & $-$ &   {\small Yes}  \\
 \hline
 \multirow{4}{6mm}{$\mathbf{2}$}&{$N_{2,1} = N_{1,1}\oplus N_{1,1}$} &   {$e_{1}^{2} = e_{2}^{2} = 0$} &   {$[2]$} &   {\small Yes}  \\\cline{2-5}
  &{$N_{2,2}$} &   {$e_{1}^{2} = e_{2},$ $e_{2}^{2} = 0$} &   {$[1, 1]$} &   {\small Yes}  \\\cline{2-5}
  &{$\E_{2,3} = \E_{11}\oplus N_{1,1}$} &   {$e_{1}^{2} = e_{1},$ $e_{2}^{2} = 0$} & $-$ &   {\small Yes}  \\\cline{2-5}
  &{$\E_{2,4} = \E_{22}$} &   {$e_{1}^{2} = e_{1},$ $e_{2}^{2} = e_{2}$} & $-$ &   {\small Yes}\\
 \hline
 \multirow{7}{6mm}{$\mathbf{3}$}&{$N_{3,1} = N_{2,1}\oplus N_{1,1}$} &   {$e_{1}^{2} = e_{2}^{2} = e_{3}^{2} = 0$} &   {$[3]$} &   {\small Yes}  \\\cline{2-5}
  &{$N_{3,2} = N_{2,2}\oplus N_{1,1}$} &   {$e_{1}^{2} = e_{2},$ $e_{2}^{2} = e_{3}^{2} = 0$} &   {$[2, 1]$} &   {\small Yes}  \\\cline{2-5}
  &{$N_{3,3}(\alpha)$} &   {$e_{1}^{2} = e_{3},$ $e_{2}^{2} = \alpha e_{3},$ $e_{3}^{2} = 0$ with $\alpha \in F^{*}$} &   {$[1, 2]$} &   {\small Yes}  \\\cline{2-5}
  &{$\E_{3,4} = \E_{11}\oplus N_{2,1}$} &   {$e_{1}^{2} = e_{1},$ $e_{2}^{2} = e_{3}^{2} = 0$} &   $-$ &   {\small Yes}  \\\cline{2-5}
  &{$\E_{3,5} = \E_{11}\oplus N_{2,2}$} &   {$e_{1}^{2} = e_{1},$ $e_{2}^{2} = e_{3},$ $e_{3}^{2} = 0$} & $-$ &   {\small Yes}  \\\cline{2-5}
  &{$\E_{3,6} = \E_{22}\oplus N_{1,1}$} &   {$e_{1}^{2} = e_{1},$ $e_{2}^{2} = e_{2},$ $e_{3}^{2} = 0$} & $-$ &   {\small Yes}  \\\cline{2-5}
  &{$\E_{3,7} = \E_{33}$} &   {$e_{1}^{2} = e_{1},$ $e_{2}^{2} = e_{2},$ $e_{3}^{2} = e_{3}$} & $-$ &   {\small Yes}  \\
 \hline
\multirow{13}{6mm}{$\mathbf{4}$}&{$N_{4,1} = N_{3,1}\oplus N_{1,1}$} &   {$e_{1}^{2} = e_{2}^{2} = e_{3}^{2} = e_{4}^{2} = 0$} &   {$[4]$} &   {\small Yes}  \\\cline{2-5}
  &{$N_{4,2} = N_{3,2}\oplus N_{1,1}$} &   {$e_{1}^{2} = e_{2},$ $e_{2}^{2} = e_{3}^{2} = e_{4}^{2} = 0$} &   {$[3, 1]$} &   {\small Yes}  \\\cline{2-5}
  &{$N_{4,3}(\alpha) = N_{3,3}(\alpha)\oplus N_{1,1}$} &   {$e_{1}^{2} = e_{3},$ $e_{2}^{2} = \alpha e_{3},$ $e_{3}^{2} = e_{4}^{2} = 0$ with $\alpha \in F^{*}$} &   {$[2,2]$} &   {\small Yes}  \\\cline{2-5}
  &{$N_{4,4} = N_{2,2}\oplus N_{2,2}$} &   {$e_{1}^{2} = e_{2},$ $e_{2}^{2} = 0,$ $e_{3}^{2} = e_{4},$ $e_{4}^{2} = 0$} &   {$[2, 2]$} &   {\small Yes}  \\\cline{2-5}

  &{$N_{4,5}(\alpha, \beta)$} &   {$e_{1}^{2} = e_{4},$ $e_{2}^{2} = \alpha e_{4},$ $e_{3}^{2} = \beta e_{4},$ $e_{4}^{2} = 0$ with $\alpha, \beta \in F^{*}$} &   {$[1, 3]$} &   {\small Yes}  \\\cline{2-5}
  &{$N_{4,6}$} &   {$e_{1}^{2} = e_{2} + e_{3},$ $e_{2}^{2} = e_{4},$ $e_{3}^{2} = -e_{4},$ $e_{4}^{2} = 0$} &   {$[1, 2, 1]$} &   {\small No}  \\\cline{2-5}
  &{$\E_{4,7} = \E_{11}\oplus N_{3,1}$}  &   {$e_{1}^{2} = e_{1},$ $e_{2}^{2} = e_{3}^{2} = e_{4}^{2} = 0$} & $-$ & {Yes}  \\\cline{2-5}
  &{$\E_{4,8} = \E_{11}\oplus N_{3,2}$} &   {$e_{1}^{2} = e_{1},$ $e_{2}^{2} = e_{3},$ $e_{3}^{2} = e_{4}^{2} = 0$} & $-$ &   {\small Yes}  \\\cline{2-5}
  &{$\E_{4,9}(\alpha) = \E_{11}\oplus N_{3,3}(\alpha)$} &   {$e_{1}^{2} = e_{1},$ $e_{2}^{2} = e_{4},$ $e_{3}^{2} = \alpha e_{4},$ $e_{4}^{2} = 0$ with $\alpha \in F^{*}$} & $-$ &   {\small Yes}  \\\cline{2-5}
  &{$\E_{4,10} = \E_{22}\oplus N_{2,1}$} &   {$e_{1}^{2} = e_{1},$ $e_{2}^{2} = e_{2},$ $e_{3}^{2} = e_{4}^{2} = 0$} & $-$ &   {\small Yes } \\\cline{2-5}
  &{$\E_{4,11} = \E_{22}\oplus N_{2,2}$} &   {$e_{1}^{2} = e_{1},$ $e_{2}^{2} = e_{2},$ $e_{3}^{2} = e_{4},$ $e_{4}^{2} = 0$}  & $-$ &   {\small Yes}  \\\cline{2-5}
  &{$\E_{4,12} = \E_{33}\oplus N_{1,1}$} &   {$e_{1}^{2} = e_{1},$ $e_{2}^{2} = e_{2},$ $e_{3}^{2} = e_{3},$ $e_{4}^{2} = 0$} & $-$ &   {\small Yes}  \\\cline{2-5}
  &{$\E_{4,13} = \E_{44}$} &   {$e_{1}^{2} = e_{1},$ $e_{2}^{2} = e_{2},$ $e_{3}^{2} = e_{3},$ $e_{4}^{2} = e_{4}$} & $-$ &   {\small Yes}  \\
\hline
\end{longtable}
}
\end{prop}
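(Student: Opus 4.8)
The plan is to combine the Wedderburn decomposition of Theorem~\ref{Wedder} with the classification of nil indecomposable power-associative evolution algebras obtained in the preceding subsections, and then enumerate direct sums. First I would note that by Theorem~\ref{Wedder} (allowing $s=0$, in case $\E$ is itself nil) every power-associative evolution algebra $\E$ of dimension $d\le 4$ is a direct sum of algebras $\E\simeq Ku_1\oplus\cdots\oplus Ku_s\oplus N$, compatible with a natural basis, where each $Ku_i$ is the one-dimensional algebra with $u_i^2=u_i$, i.e. a copy of $\E_{11}$, and $N=\mathrm{Rad}(\E)$ is a nil power-associative evolution algebra of dimension $d-s$. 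Hence it suffices to (i) list all nil power-associative evolution algebras of dimension $\le 4$, and (ii) form all algebras $\E_{11}^{\oplus s}\oplus N$ with $s\ge 0$ and $\dim_F N=d-s$.

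For (i) I would decompose a nil power-associative evolution algebra $N$ into a direct sum of indecomposable evolution subalgebras, obtained by iterating the decomposability lemma (whenever $\dim_F(\ann(N))\ge\tfrac12\dim_F(N)$ the algebra splits off a proper ideal). Each indecomposable summand is again a nil power-associative evolution algebra; if its dimension is at most $3$ it is associative, since a nil power-associative evolution algebra which is not associative has dimension $\ge 4$ (as shown in the discussion preceding the four-dimensional non-associative case), and so by the classification theorems above it is $N_{1,1}$, $N_{2,2}$ or $N_{3,3}(\alpha)$; if its dimension is $4$ it is $N_{4,5}(\al,\be)$ (associative) or $N_{4,6}$ (the unique indecomposable non-associative one). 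Running over all multisets of such summands with total dimension $0,1,2,3,4$ yields exactly the nil rows of Table~1, namely $N_{1,1}$; $N_{2,1}=N_{1,1}\oplus N_{1,1}$ and $N_{2,2}$; $N_{3,1}$, $N_{3,2}$, $N_{3,3}(\alpha)$; and $N_{4,1},\dots,N_{4,6}$.

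For (ii) I would append $\E_{11}^{\oplus s}$ for $s=0,1,\dots,d$ and pair it with a nil algebra of dimension $d-s$ from step (i); reading this off for each $d\le 4$ gives precisely the entries of Table~1. For instance, for $d=4$ one obtains $\E_{44}$ ($s=4$), $\E_{33}\oplus N_{1,1}$ ($s=3$), $\E_{22}\oplus N_{2,1}$ and $\E_{22}\oplus N_{2,2}$ ($s=2$), $\E_{11}\oplus N_{3,j}$ for $j=1,2,3$ ($s=1$), and $N_{4,1},\dots,N_{4,6}$ ($s=0$): thirteen algebras. The ``one and only one'' part follows because $\E_{ss}=Ku_1\oplus\cdots\oplus Ku_s$ and $N=\mathrm{Rad}(\E)$ are isomorphism invariants of $\E$, because the decomposition of $N$ into indecomposable ideals is unique up to order and isomorphism, and because the indecomposable nil algebras listed are pairwise non-isomorphic, being separated by their type and by the normalization of their structure constants.

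The substantive work --- classifying the indecomposable nil pieces --- is already done in the preceding theorems, so the remainder is bookkeeping. The two points that need to be stated with some care are: that a finite-dimensional nil power-associative evolution algebra is indeed a direct sum of indecomposable evolution subalgebras, and that this decomposition is essentially unique (both following from the decomposability lemma together with the indecomposable cases just classified); and the fact that over a field that is not quadratically closed a parametrized row such as $N_{3,3}(\alpha)$ may contain several isomorphism classes --- $N_{3,3}(\alpha)\simeq N_{3,3}(\alpha')$ precisely when $\alpha/\alpha'\in(F^{*})^{2}$ --- so that ``one and only one'' should be understood at the level of the normalized families of Table~1.
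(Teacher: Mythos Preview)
Your approach is exactly the one the paper takes: the proposition is not given a separate proof there, but is presented (in the paragraph opening \S4.3) as the bookkeeping consequence of Theorem~\ref{Wedder} together with the classification of indecomposable nil pieces carried out in \S4.1 and \S4.2, which is precisely the two-step enumeration you describe. Your closing caveat about the parametrized families (e.g.\ $N_{3,3}(\alpha)\simeq N_{3,3}(\alpha')$ iff $\alpha/\alpha'\in(F^*)^2$, and similarly for $N_{4,5}$) is a genuine subtlety that the paper's ``one and only one'' phrasing glosses over, so it is worth keeping.
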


\begin{prop}
Let $\E$ be a power-associative evolution algebra of dimension $5$. Then $\E$ is isomorphic to one and only one of the algebras in Table $2$.
{\footnotesize
\begin{longtable}[c]{|c|p{6,6cm}|c|c|}
 \caption*{\underline{\textbf{Table 2:}\label{Table2}} $\dim(\E)=5$}\\
  \hline
  {$\E$} &   {\small Multiplication} &   {\small Type} &   {\small Associative}  \\\hline
\endfirsthead
 \hline
 {$\E$} &   {\small Multiplication} &   {\small Type} &   {\small Associative}  \\\hline
\endhead \hline
\endfoot
  {$N_{5,1} = N_{4,1}\oplus N_{1,1}$} &   {$e_{1}^{2} = e_{2}^{2} = e_{3}^{2} = e_{4}^{2} = e_{5}^{2} = 0$} &   {$[5]$} &   {\small Yes}  \\\hline
  {$N_{5,2} = N_{4,2}\oplus N_{1,1}$} &   {$e_{1}^{2} = e_{2},$ $e_{2}^{2} = e_{3}^{2} = e_{4}^{2} = e_{5}^{2} = 0$} &   {$[4, 1]$} &   {\small Yes}  \\\hline
  {$N_{5,3}(\alpha) = N_{4,3}(\alpha)\oplus N_{1,1}$} &   {$e_{1}^{2} = e_{3},$ $e_{2}^{2} = \alpha e_{3},$ $e_{3}^{2} = e_{4}^{2} = e_{5}^{2} = 0$ with $\alpha \in F^{*}$} &   {$[3,2]$} &   {\small Yes } \\\hline
  {$N_{5,4} = N_{4,4}\oplus N_{1,1}$} &   {$e_{1}^{2} = e_{2},$ $e_{2}^{2} = 0,$ $e_{3}^{2} = e_{4},$ $e_{4}^{2} = e_{5}^{2} = 0$} &   {$[3, 2]$} &   {\small Yes}  \\\hline
  {$N_{5,5}(\alpha, \beta) = N_{4,5}(\alpha, \beta)\oplus N_{1, 1}$} &   {$e_{1}^{2} = e_{4},$ $e_{2}^{2} = \alpha e_{4},$ $e_{3}^{2} = \beta e_{4},$ $e_{4}^{2} = e_{5}^{2} = 0$ with $\alpha, \beta \in F^{*}$} &   {$[2, 3]$} &   {\small Yes}  \\\hline
  {$N_{5,6}(\alpha) = N_{3,3}(\alpha)\oplus N_{2,2}$} &   {$e_{1}^{2} = e_{3},$ $e_{2}^{2} = \alpha e_{3},$ $e_{3}^{2} = 0,$ $e_{4}^{2} =  e_{5},$ $e_{5}^{2} = 0$ with $\alpha \in F^{*}$} &   {$[2, 3]$} &   {\small Yes}  \\\hline

  {$N_{5,7} = N_{4,6}\oplus N_{1,1}$}  &   {$e_{1}^{2} = e_{2} + e_{3},$ $e_{2}^{2} = e_{4},$ $e_{3}^{2} = -e_{4},$ $e_{4}^{2} = e_{5}^{2} = 0$} &   {$[2, 2, 1]$} &   {\small No}  \\\hline
  {$N_{5,8}(\alpha, \beta, \gamma)$} &   {$e_{1}^{2} = e_{5},$ $e_{2}^{2} = \alpha e_{5},$ $e_{3}^{2} = \beta e_{5},$ $e_{4}^{2} = \gamma e_{5},$ $e_{5}^{2} = 0$ with $\alpha, \beta, \gamma \in F^{*}$} &   {$[1, 4]$} &   {\small Yes}  \\\hline
  {$N_{5,9}(\alpha, \beta)$} &   {$e_{1}^{2} = e_{4},$ $e_{2}^{2} = \alpha e_{4} + \beta e_{5},$ $e_{3}^{2} = e_{5},$ $e_{4}^{2} = e_{5}^{2} = 0$ with $\alpha, \beta \in F^{*}$} &   {$[2, 3]$} &   {\small Yes}  \\\hline
  {$N_{5,10}(\alpha)$} &   {$e_{1}^{2} = e_{2} + e_{3},$ $e_{2}^{2} = e_{5},$ $e_{3}^{2} = -e_{5},$ $e_{4}^{2} = \alpha e_{5} ,$ $e_{5}^{2} = 0$ with $\alpha \in F^{*}$} &   {$[1, 3, 1]$} &   {\small No}  \\\hline
  {$N_{5,11}(\alpha)$} &   {$e_{1}^{2} = e_{2} + e_{3},$ $e_{2}^{2} = e_{5},$ $e_{3}^{2} = -e_{5},$ $e_{4}^{2} = \alpha(e_{2} + e_{3}),$ $e_{5}^{2} = 0$ with $\alpha \in F^{*}$} &   {$[1, 2, 2]$} &   {\small No}  \\\hline
  {$N_{5,12}(\alpha, \beta)$} &   {$e_{1}^{2} = e_{2} + e_{3},$ $e_{2}^{2} = e_{5},$ $e_{3}^{2} = -e_{5},$ $e_{4}^{2} = \alpha(e_{2} + e_{3}) +  \beta e_{5},$ $e_{5}^{2} = 0$ with $\alpha, \beta \in F^{*}$} &   {$[1, 2, 2]$} &   {\small No}  \\\hline
  {$\E_{5,13} = \E_{11}\oplus N_{4,1}$} &   {$e_{1}^{2} = e_{1},$ $e_{2}^{2} = e_{3}^{2} = e_{4}^{2} = e_{5}^{2} = 0$} & $-$ &   {\small Yes}  \\\hline
  {$\E_{5,14} = \E_{11}\oplus N_{4,2}$} &   {$e_{1}^{2} = e_{1},$ $e_{2}^{2} = e_{3},$ $e_{3}^{2} = e_{4}^{2} = e_{5}^{2} = 0$} & $-$ &   {\small Yes}  \\\hline
  {$\E_{5,15}(\alpha) = \E_{11}\oplus N_{4,3}(\alpha)$} &   {$e_{1}^{2} = e_{1},$ $e_{2}^{2} = e_{4},$ $e_{3}^{2} = \alpha e_{4},$ $e_{4}^{2} = e_{5}^{2} = 0$ with $\alpha \in F^{*}$} & $-$ &   {\small Yes}  \\\hline

  {$\E_{5,16} = \E_{11}\oplus N_{4,4}$} &   {$e_{1}^{2} = e_{1},$ $e_{2}^{2} = e_{3},$  $e_{3}^{2} = 0,$ $e_{4}^{2} = e_{5},$ $e_{5}^{2} = 0$} & $-$ &   {\small Yes}  \\\hline
  {$\E_{5,17}(\alpha, \beta) = \E_{11}\oplus N_{4,5}(\alpha, \beta)$} &   {$e_{1}^{2} = e_{1},$ $e_{2}^{2} = e_{5},$ $e_{3}^{2} = \alpha e_{5},$ $e_{4}^{2} = \beta e_{5},$ $e_{5}^{2} = 0$ with $\alpha, \beta \in F^{*}$} & $-$ &   {\small Yes}  \\\hline
  {$\E_{5,18} = \E_{11}\oplus N_{4,6}$} &   {$e_{1}^{2} = e_{1},$ $e_{2}^{2} = e_{3} + e_{4},$ $e_{3}^{2} = e_{5},$ $e_{4}^{2} = -e_{5},$ $e_{5}^{2} = 0$} & $-$ &   {\small No}  \\\hline
  {$\E_{5,19} = \E_{22}\oplus N_{3,1}$} &   {$e_{1}^{2} = e_{1},$ $e_{2}^{2} = e_{2},$ $e_{3}^{2} = e_{4}^{2} = e_{5}^{2} = 0$} & $-$ &   {\small Yes}  \\\hline
  {$\E_{5,20} = \E_{22}\oplus N_{3,2}$}  &   {$e_{1}^{2} = e_{1},$ $e_{2}^{2} = e_{2},$ $e_{3}^{2} = e_{4},$ $e_{4}^{2} = e_{5}^{2} = 0$} & $-$ &   {\small Yes}  \\\hline
  {$\E_{5,21}(\alpha) = \E_{22}\oplus N_{3,3}(\alpha)$} &   {$e_{1}^{2} = e_{1},$ $e_{2}^{2} = e_{2},$ $e_{3}^{2} = e_{5},$ $e_{4}^{2} =  \alpha e_{5},$ $e_{5}^{2} = 0$ with $ \alpha \in K^{*}$} & $-$ &   {\small Yes}  \\\hline
  {$\E_{5,22} = \E_{33}\oplus N_{2,1}$} &   {$e_{1}^{2} = e_{1},$ $e_{2}^{2} = e_{2},$ $e_{3}^{2} = e_{3},$ $e_{4}^{2} = e_{5}^{2} = 0$} & $-$ &   {\small Yes}  \\\hline
  {$\E_{5,23} = \E_{33}\oplus N_{2,2}$} &   {$e_{1}^{2} = e_{1},$ $e_{2}^{2} = e_{2},$ $e_{3}^{2} = e_{3},$ $e_{4}^{2} = e_{5},$ $e_{5}^{2} = 0$} & $-$ &   {\small Yes}  \\\hline
  {$\E_{5,24} = \E_{44}\oplus N_{1,1}$} &   {$e_{1}^{2} = e_{1},$ $e_{2}^{2} = e_{2},$ $e_{3}^{2} = e_{3},$ $e_{4}^{2} = e_{4},$ $e_{5}^{2} = 0$} & $-$ &   {\small Yes}  \\\hline
  {$\E_{5,25} = \E_{55}$} &   {$e_{1}^{2} = e_{1},$ $e_{2}^{2} = e_{2},$ $e_{3}^{2} = e_{3},$ $e_{4}^{2} = e_{4},$ $e_{5}^{2} = e_{5}$} & $-$ &   {\small Yes}  \\
\hline
\end{longtable}
}
\end{prop}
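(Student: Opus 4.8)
The plan is to deduce this from the Wedderburn decomposition (Theorem~\ref{Wedder}) together with the classifications of indecomposable nil power-associative evolution algebras established earlier in this section, and then to organise the resulting list and check that it has no redundancy.

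First I would reduce to the nil case. Let $\E$ be power-associative with $\dim_F(\E)=5$. If $\E$ is nil it is one of the nil power-associative evolution algebras of dimension $5$. Otherwise Theorem~\ref{Wedder} yields pairwise orthogonal idempotents $u_1,\dots,u_s$, $s\ge 1$, and a decomposition of algebras $\E=Ku_1\oplus\cdots\oplus Ku_s\oplus N$ with $N=\mathrm{Rad}(\E)$ either $0$ or a nil power-associative evolution algebra of dimension $5-s$. Each $Ku_i$ is a one-dimensional evolution algebra with $u_i^2=u_i$, hence $\cong\E_{11}$, so $\E\cong\underbrace{\E_{11}\oplus\cdots\oplus\E_{11}}_{s}\oplus\,N$. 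Since $N=\mathrm{Rad}(\E)$ is the maximal nil ideal it is uniquely determined, so $s=\dim_F(\E)-\dim_F(\mathrm{Rad}(\E))$ is an invariant of $\E$; hence it suffices, for each $s\in\{0,1,\dots,5\}$, to list all nil power-associative evolution algebras of dimension $5-s$.

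Next I would write down those nil pieces. In dimensions $\le 4$ they are exactly the algebras $N_{i,j}$ of Table~1 ($N_{1,1}$; $N_{2,1},N_{2,2}$; $N_{3,1},N_{3,2},N_{3,3}(\al)$; $N_{4,1},\dots,N_{4,6}$). In dimension $5$, a nil power-associative evolution algebra decomposes into indecomposable evolution algebras, and by the preceding indecomposable classifications the indecomposable summands of dimension $\le 5$ are $N_{1,1}$, $N_{2,2}$, $N_{3,3}(\al)$, $N_{4,5}(\al,\be)$, $N_{4,6}$ and, in dimension $5$, $N_{5,8}(\al,\be,\g)$, $N_{5,9}(\al,\be)$, $N_{5,10}(\al)$, $N_{5,11}(\al)$, $N_{5,12}(\al,\be)$. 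Running through all multisets of these whose dimensions add to $5$ gives, besides those five indecomposable ones, $N_{5,1}=N_{1,1}^{\oplus 5}$, $N_{5,2}=N_{2,2}\oplus N_{1,1}^{\oplus 3}$, $N_{5,3}(\al)=N_{3,3}(\al)\oplus N_{1,1}^{\oplus 2}$, $N_{5,4}=N_{2,2}^{\oplus 2}\oplus N_{1,1}$, $N_{5,5}(\al,\be)=N_{4,5}(\al,\be)\oplus N_{1,1}$, $N_{5,6}(\al)=N_{3,3}(\al)\oplus N_{2,2}$ and $N_{5,7}=N_{4,6}\oplus N_{1,1}$. Feeding these nil pieces back into $\E\cong(\E_{11})^{\oplus s}\oplus N$ for $s=5,4,3,2,1,0$ produces, in turn, $\E_{5,25}$; $\E_{5,24}$; $\E_{5,22},\E_{5,23}$; $\E_{5,19},\E_{5,20},\E_{5,21}(\al)$; $\E_{5,13},\dots,\E_{5,18}$; and $N_{5,1},\dots,N_{5,12}$, i.e. exactly the $25$ algebras of Table~2.

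Finally I would check that these algebras are pairwise non-isomorphic and that the parameters parametrise distinct isomorphism classes. Most pairs are separated by the coarse invariants $\dim_F(\E)$, $s$ (equivalently $\dim_F(\E_{ss})$), $\dim_F(\ann(\E))$, the type of $\mathrm{Rad}(\E)$, and whether $\E$ is decomposable; when these coincide I would use the uniqueness of the decomposition into indecomposable evolution algebras to match up indecomposable summands, and, inside an indecomposable family of fixed type, the finer invariants $\U_i\oplus\U_1$ and $(\U_3\oplus\U_1)^2$ (as in the five- and six-dimensional proofs) together with the base-change normalisations describing when two parameter tuples give isomorphic algebras. The hard part will be this last step: the numerical invariants alone do not suffice --- for instance $N_{5,5}(\al,\be)$, $N_{5,6}(\al)$ and $N_{5,9}(\al,\be)$ all have annihilator of dimension $2$ and type $[2,3]$, and are told apart only through the (essentially unique) list of their indecomposable components --- so one must lean on the Krull--Schmidt-type uniqueness of the indecomposable decomposition and on careful parameter bookkeeping. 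The Wedderburn decomposition being already in hand, the whole proof amounts to this combinatorial assembly plus the invariant checks.
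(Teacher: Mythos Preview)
Your proposal is correct and follows the same route as the paper: the proposition in question is stated in the paper essentially without a separate proof, being the assembly of Theorem~\ref{Wedder} (Wedderburn decomposition) with the preceding classifications of indecomposable nil power-associative evolution algebras in dimensions~$\le 5$, exactly as you outline. Your write-up is in fact more explicit than the paper about the non-isomorphism part (the invariance of~$s$, the appeal to a Krull--Schmidt-type uniqueness of indecomposable summands, and the use of the invariants $\U_i\oplus\U_1$); the paper leaves this implicit, so your added care here is welcome rather than a deviation.
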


\begin{prop}
Let $\E$ be a power-associative evolution algebra of dimension $6$. Then $\E$ is isomorphic to one and only one of the algebras in Table $3$.
 {\footnotesize
 \begin{longtable}[c]{|c|p{6,2cm}|c|c|}
 \caption*{\underline{\textbf{Table 3:}\label{Table3}} $\dim(\E)=6$}\\
  \hline
    {$\E$} &   {\small Multiplication} &   {\small Type} & {\footnotesize Associative}  \\\hline
\endfirsthead
 \hline   {$\E$} &   {\small Multiplication} &   {\small Type} & {\footnotesize Associative}  \\\hline
\endhead \hline
\endfoot
{$N_{6,1}$ = $N_{5,1}\oplus N_{1,1}$} & {$e_{1}^{2} = e_{2}^{2} = e_{3}^{2} = e_{4}^{2} = e_{5}^{2} = e_{6}^{2} = 0$} & {$[6]$} & {\small Yes}  \\\hline
{$N_{6,2} = N_{5,2}\oplus N_{1,1}$} & {$e_{1}^{2} = e_{2},$ $e_{2}^{2} = e_{3}^{2} = e_{4}^{2} = e_{5}^{2} = e_{6}^{2} = 0$} & {$[5, 1]$ } & {\small Yes}  \\\hline
{$N_{6,3}(\alpha) = N_{5,3}(\alpha)\oplus N_{1,1}$} & {$e_{1}^{2} = e_{3},$ $e_{2}^{2} = \alpha e_{3},$ $e_{3}^{2} = e_{4}^{2} = e_{5}^{2} = e_{6}^{2} = 0$ with $\alpha \in F^{*}$} & $[4, 2]$ & {\small Yes} \\\hline
{$N_{6,4} = N_{5,4}\oplus N_{1,1}$} & {$e_{1}^{2} = e_{2},$ $e_{2}^{2} = 0,$ $e_{3}^{2} = e_{4},$ $e_{4}^{2} = e_{5}^{2} = e_{6}^{2} = 0$} & {$[4,2]$} & {\small Yes} \\\hline
{$N_{6,5}(\alpha, \beta) = N_{5,5}(\alpha, \beta)\oplus N_{1, 1}$} & {$e_{1}^{2} = e_{4},$ $e_{2}^{2} = \alpha e_{4},$ $e_{3}^{2} = \beta e_{4},$ $e_{4}^{2} = e_{5}^{2} = e_{6}^{2} = 0$ with $\alpha, \beta \in F^{*}$} & {$[3, 3]$} & {\small Yes} \\\hline
{$N_{6,6}(\alpha) = N_{5,6}(\alpha)\oplus N_{1,1}$} & {$e_{1}^{2} = e_{3},$ $e_{2}^{2} = \alpha e_{3},$ $e_{3}^{2} = 0,$ $e_{4}^{2} =  e_{5},$ $e_{5}^{2} = e_{6}^{2} = 0$ with $\alpha \in F^{*}$} & {$[3, 3]$} & {\small Yes}  \\\hline
{$N_{6,7}(\alpha, \beta, \gamma) = N_{5,8}(\alpha, \beta, \gamma)\oplus N_{1,1}$}  & {$e_{1}^{2} = e_{5},$ $e_{2}^{2} = \alpha e_{5},$ $e_{3}^{2} = \beta e_{5},$ $e_{4}^{2} = \gamma e_{5},$ $e_{5}^{2} = e_{6}^{2} = 0$ with $\alpha, \beta, \gamma \in F^{*}$} & {$[2, 4]$} & {\small Yes}  \\\hline
{$N_{6,8}(\alpha, \beta) = N_{5,9}(\alpha, \beta)\oplus N_{1,1}$} &{$e_{1}^{2} = e_{4},$ $e_{2}^{2} = \alpha e_{4} + \beta e_{5},$ $e_{3}^{2} = e_{5},$ $e_{4}^{2} = e_{5}^{2} = e_{6}^{2} = 0$ with $\alpha, \beta \in F^{*}$} &{$[3, 3]$} & {\small Yes}  \\\hline
{$N_{6,9}(\alpha, \beta) = N_{4,5}(\alpha, \beta)\oplus N_{2,2}$} & {$e_{1}^{2} = e_{4},$ $e_{2}^{2} = \alpha e_{4},$ $e_{3}^{2} = \beta e_{4},$ $e_{4}^{2} = 0,$ $e_{5}^{2} = e_{6},$ $e_{6}^{2} = 0$ with $\alpha, \beta \in F^{*}$} &{$[2, 4]$} & {\small Yes} \\\hline
{$N_{6,10}(\alpha, \beta) = N_{3,3}(\alpha)\oplus N_{3,3}(\beta)$} & {$e_{1}^{2} = e_{3},$ $e_{2}^{2} = \alpha e_{3},$ $e_{3}^{2} = 0,$ $e_{4}^{2} = e_{6},$ $e_{5}^{2} = \beta e_{6},$ $e_{6}^{2} = 0$ with $\alpha, \beta \in F^{*}$} &{$[2, 4]$} & {\small Yes}  \\\hline
{$N_{6,11} = N_{5, 7}\oplus N_{1, 1}$} &   {$e_{1}^{2} = e_{2} + e_{3},$ $e_{2}^{2} = e_{4},$ $e_{3}^{2} = -e_{4},$ $e_{4}^{2} = e_{5}^{2} = e_{6}^{2} = 0$} &   {$[3, 2, 1]$} &   {\small No}  \\\hline
{$N_{6,12}(\alpha) = N_{5, 10}(\alpha)\oplus N_{1, 1}$} &   {$e_{1}^{2} = e_{2} + e_{3},$ $e_{2}^{2} = e_{5},$ $e_{3}^{2} = -e_{5},$ $e_{4}^{2} = \alpha e_{5},$ $e_{5}^{2} = e_{6}^{2} =  0$ with $\alpha \in F^{*}$} &   {$[2, 3, 1]$} &   {\small No } \\\hline
{$N_{6,13}(\alpha) = N_{5, 11}(\alpha)\oplus N_{1, 1}$} &   {$e_{1}^{2} = e_{2} + e_{3},$ $e_{2}^{2} = e_{5},$ $e_{3}^{2} = -e_{5},$ $e_{4}^{2} = \alpha(e_{2} + e_{3}),$ $e_{5}^{2} = e_{6}^{2} =  0$ with $\alpha \in F^{*}$} &   {$[2, 2, 2]$} &   {\small No}  \\\hline
{$N_{6,14}(\alpha, \beta) = N_{5, 12}(\alpha, \beta)\oplus N_{1, 1}$} &   {$e_{1}^{2} = e_{2} + e_{3},$ $e_{2}^{2} = e_{5},$ $e_{3}^{2} = -e_{5},$ $e_{4}^{2} = \alpha(e_{2} + e_{3}) + \beta e_{5},$ $e_{5}^{2} = e_{6}^{2} =  0$ with $\alpha, \beta \in F^{*}$} &   {$[2, 2, 2]$} &   {\small No}  \\\hline
{$N_{6,15} = N_{4, 6}\oplus N_{2, 2}$} & {$e_{1}^{2} = e_{2} + e_{3},$ $e_{2}^{2} = e_{4},$ $e_{3}^{2} = -e_{4},$ $e_{4}^{2} = 0,$ $e_{5}^{2} = e_{6},$ $e_{6}^{2} = 0$} & {$[2, 3, 1]$} & {\small No} \\\hline
{$N_{6,16}(\alpha, \beta, \gamma, \delta)$} &   {$e_{1}^{2} = e_{6}, e_{2}^{2} = \alpha e_{6},$ $e_{3}^{2} = \beta e_{6},$ $e_{4}^{2} = \gamma e_{6},$ $e_{5}^{2} = \delta e_{6}$, $e_{6}^{2} = 0$ with $\alpha, \beta, \gamma, \delta \in F^{*}$} &   {$[1, 5]$} &   {\small Yes}  \\\hline
{$N_{6,17}(\alpha, \beta, \gamma)$} &   {$e_{1}^{2} = e_{5}$, $e_{2}^{2} = \alpha e_{5} +  \beta e_{6}$, $e_{3}^{2} =  \g e_{6}$, $e_{4}^{2} = e_{6}$, $e_{5}^{2} = e_{6}^{2} = 0$ with $\alpha\be\g\neq 0$}  &   {$[2, 4]$} & {\small Yes}  \\\hline
{$N_{6,18}(\alpha, \beta, \gamma,\delta)$} &   {$e_{1}^{2} = e_{5}$, $e_{2}^{2} = \al e_{5} +  \be e_{6}$, $e_{3}^{2} = \gamma e_{5}+\de e_6$, $e_{4}^{2} = e_{6}$, $e_{5}^{2} = e_{6}^{2} = 0$ with $\al\be\ne0$, $\g\de\ne 0$ and $\alpha\delta-\beta\gamma \neq 0$} &   {$[2, 4]$} &   {\small Yes}  \\\hline
{$N_{6,19}(\alpha, \beta)$} &   {$e_{1}^{2} = e_{2} + e_{3},$ $e_{2}^{2} = e_{6},$ $e_{3}^{2} = -e_{6},${} $e_{4}^{2} = \alpha e_{6},$  $e_{5}^{2} = \beta e_{6},$ $e_{6}^{2} = 0$ with $\alpha, \beta \in F^{*}$} &   {$[1, 4, 1]$} & {\small No} \\\hline
$N_{6,20}(\alpha, \beta)$ &   {$e_{1}^{2} = e_{2} + e_{3},$ $e_{2}^{2} = e_{6},$ $e_{3}^{2} = -e_{6},${} $e_{4}^{2} = \alpha(e_{2} + e_{3}),$  $e_{5}^{2} = \beta e_{6},$ $e_{6}^{2} = 0$ with $\alpha, \beta \in F^{*}$} &   {$[1, 3, 2]$} & {\small No}  \\\hline
$N_{6,21}(\alpha, \beta, \gamma)$ &   {$e_{1}^{2} = e_{2} + e_{3},$ $e_{2}^{2} = e_{6},$ $e_{3}^{2} = -e_{6},$ $e_{4}^{2} = \alpha(e_{2} + e_{3}) + \beta e_{6},$ $e_{5}^{2} = \gamma e_{6},$ $e_{6}^{2} = 0$ with $\alpha, \beta,\gamma \in F^{*}$} &   {$[1, 3, 2]$} &   {\small No} \\\hline
{$N_{6,22}(\alpha, \beta)$} &   {$e_{1}^{2} = e_{2} + e_{3},$ $e_{2}^{2} = e_{6},$ $e_{3}^{2} = -e_{6},$ $e_{4}^{2} = \alpha(e_{2} + e_{3}),$  $e_{5}^{2} = \beta(e_{2} + e_{3}),$ $e_{6}^{2} = 0$ with $\alpha, \beta \in F^{*}$} &   {$[1, 2, 3]$} &   {\small No}  \\\hline
{$N_{6,23}(\alpha, \beta, \gamma, \delta)$} &   {$e_{1}^{2} = e_{2} + e_{3},$ $e_{2}^{2} = e_{6},$  $e_{3}^{2} = -e_{6},$ \newline $e_{4}^{2} = \alpha(e_{2} + e_{3}) + \beta e_{6},$ $e_{5}^{2} = \gamma(e_{2} + e_{3}) + \delta e_{6},$ $e_{6}^{2} = 0$ with $\alpha\delta-\beta\gamma \neq 0$, $\al\g\ne0$ et $\beta\delta \neq 0$} &   {$[1, 2, 3]$} &   {\small No}  \\\hline
{$N_{6,24}(\alpha, \beta, \gamma)$} &   {$e_{1}^{2} = e_{2} + e_{3},$ $e_{2}^{2} = e_{6},$  $e_{3}^{2} = -e_{6},${} \newline $e_{4}^{2} = \alpha(e_{2} + e_{3}) + \beta e_{6},$ $e_{5}^{2} = \gamma(e_{2} + e_{3}),$ $e_{6}^{2} = 0$ with $\al\beta\gamma \neq 0$} &   {$[1, 2, 3]$} &   {\small No}  \\\hline
{$N_{6,25}(\alpha)$} &   {$e_{1}^{2} = e_{2} + e_{3},$ $e_{2}^{2} = e_{5},$ $e_{3}^{2} = -e_{5},${} $e_{4}^{2} = \alpha(e_{2} + e_{3}) + e_{6},$  $e_{5}^{2} = e_{6}^{2} = 0$ with $\alpha \in F^{*}$} &   {$[2, 2, 2]$} &   {\small No}  \\\hline
{$N_{6,26}$} &   {$e_{1}^{2} = e_{2} + e_{3} + e_{4},$ $e_{2}^{2} = e_{5},$  $e_{3}^{2} = e_{6},${} $e_{4}^{2} = -(e_{5} + e_{6}),$  $e_{5}^{2} = e_{6}^{2} = 0$} &   {$[2, 3, 1]$} &   {\small No} \\\hline
{$\E_{6,27}$ = $\E_{11}$$\oplus N_{5,1}$} &   {$e_{1}^{2} = e_{1},$ $e_{2}^{2} = e_{3}^{2} = e_{4}^{2} = e_{5}^{2} = e_{6}^{2} = 0$} & $-$ &   {\small Yes}  \\\hline
{$\E_{6,28}$ = $\E_{11}\oplus N_{5,2}$} &   {$e_{1}^{2} = e_{1},$ $e_{2}^{2} = e_{3},$ $e_{3}^{2} = e_{4}^{2} = e_{5}^{2} = e_{6}^{2} = 0$} & $-$ &   {\small Yes}  \\\hline
{$\E_{6,29}(\alpha)$ = $\E_{11}\oplus N_{5,3}(\alpha)$} &   {$e_{1}^{2} = e_{1},$ $e_{2}^{2} = e_{4},$ $e_{3}^{2} = \alpha e_{4},$ $e_{4}^{2} = e_{5}^{2} = e_{6}^{2} = 0$ with $\alpha \in F^{*}$} & $-$ &   {\small Yes}  \\\hline
{$\E_{6,30} = \E_{11}\oplus N_{5,4}$} &   {$e_{1}^{2} = e_{1},$ $e_{2}^{2} = e_{3},$ $e_{3}^{2} = 0,$ $e_{4}^{2} = e_{5},$ $e_{5}^{2} = e_{6}^{2} = 0$} & $-$ &   {\small Yes}  \\\hline
{$\E_{6,31}(\alpha, \beta) = \E_{11}\oplus N_{5,5}(\alpha, \beta)$} &   {$e_{1}^{2} = e_{1},$ $e_{2}^{2} = e_{5},$ $e_{3}^{2} = \alpha e_{5},$ $e_{4}^{2} = \beta e_{5},$ $e_{5}^{2} = e_{6}^{2} = 0$ with $\alpha, \beta \in F^{*}$} & $-$ &   {\small Yes}  \\\hline
{$\E_{6,32}(\alpha) = \E_{11}\oplus N_{5,6}(\alpha)$} &   $e_{1}^{2} = e_{1},$ $e_{2}^{2} = e_{4},$  $e_{3}^{2} =  \alpha e_{4},$ $e_{4}^{2} = 0,$ $e_{5}^{2} = e_{6},$ $e_{6}^{2} = 0$ with $\alpha \neq 0$. & $-$ &   {\small Yes}  \\\hline
  {$\E_{6,33}(\alpha, \beta, \gamma) = \E_{11}\oplus N_{5,8}(\alpha, \beta, \gamma)$} &   {$e_{1}^{2} = e_{1},$ $e_{2}^{2} = e_{6},$ $e_{3}^{2} = \alpha e_{6},$ $e_{4}^{2} = \beta e_{6},$ $e_{5}^{2} = \gamma e_{6},$ $e_{6}^{2} = 0$ with $\alpha, \beta, \gamma \in F^{*}$} & $-$ &   {\small Yes}  \\\hline
  {$\E_{6,34}(\alpha, \beta) = \E_{11}\oplus N_{5,9}(\alpha, \beta)$} &   {$e_{1}^{2} = e_{1},$ $e_{2}^{2} = e_{5},$ $e_{3}^{2} = \alpha e_{5} + \beta e_{6},$ $e_{4}^{2} = e_{6},$ $e_{5}^{2} = e_{6}^{2} = 0$ with $\alpha, \beta \in F^{*}$} & $-$ &   {\small Yes}  \\\hline
  {$\E_{6,35} = \E_{11}\oplus N_{5,7}$} &   {$e_{1}^{2} = e_{1},$ $e_{2}^{2} = e_{3} + e_{4},$ $e_{3}^{2} = e_{5},$ $e_{4}^{2} = -e_{5},$ $e_{5}^{2} = e_{6}^{2} = 0$} & $-$ &   {\small No}  \\\hline
  {$\E_{6,36}(\alpha) = \E_{11}\oplus N_{5,10}(\alpha)$} &   {$e_{1}^{2} = e_{1},$ $e_{2}^{2} = e_{3} + e_{4},$ $e_{3}^{2} = e_{6},$ $e_{4}^{2} = -e_{6},$ $e_{5}^{2} = \alpha e_{6},$ $e_{6}^{2} = 0$ with $\alpha \in K^{*}$} & $-$ &   {\small No}  \\\hline
  {$\E_{6,37}(\alpha) = \E_{11}\oplus N_{5,11}(\alpha)$} &   {$e_{1}^{2} = e_{1},$ $e_{2}^{2} = e_{3} + e_{4},$ $e_{3}^{2} = e_{6},$ $e_{4}^{2} = -e_{6},$ $e_{5}^{2} = \alpha(e_{3} + e_{4}),$ $e_{6}^{2} = 0$ with $\alpha \in F^{*}$} & $-$ &   {\small No}  \\\hline
  {$\E_{6,38}(\alpha, \beta) = \E_{11}\oplus N_{5,12}(\alpha, \beta)$} &   {$e_{1}^{2} = e_{1},$ $e_{2}^{2} = e_{3} + e_{4},$ $e_{3}^{2} = e_{6},$ $e_{4}^{2} = -e_{6},$ $e_{5}^{2} = \alpha(e_{3} + e_{4}) + \beta e_{6},$ $e_{6}^{2} = 0$ with $\alpha, \beta \in F^{*}$} & $-$ &   {\small No}  \\\hline
  {$\E_{6,39} = \E_{22}\oplus N_{4,1}$} &   {$e_{1}^{2} = e_{1},$ $e_{2}^{2} = e_{2},$ $e_{3}^{2} = e_{4}^{2} = e_{5}^{2} = e_{6}^{2} = 0$ } & $-$ &   {\small Yes}  \\\hline
  {$\E_{6,40} = \E_{22}\oplus N_{4,2}$} &   {$e_{1}^{2} = e_{1},$ $e_{2}^{2} = e_{2},$ $e_{3}^{2} = e_{4},$ $e_{4}^{2} = e_{5}^{2} = e_{6}^{2} = 0$ } & $-$ &   {\small Yes}  \\\hline
  $\E_{6,41}(\alpha) = \E_{22}\oplus N_{4,3}(\alpha)$ &   {$e_{1}^{2} = e_{1},$ $e_{2}^{2} = e_{2},$ $e_{3}^{2} = e_{5},$ $e_{4}^{2} = \alpha e_{5},$ $e_{5}^{2} = e_{6}^{2} = 0$ with $\alpha \in K^{*}$ } & $-$ &   {\small Yes}  \\\hline
  {$\E_{6,42} = \E_{22}\oplus N_{4,4}$} &   {$e_{1}^{2} = e_{1},$ $e_{2}^{2} = e_{2},$ $e_{3}^{2} = e_{4},$ $e_{4}^{2} = 0,$ $e_{5}^{2} = e_{6},$ $e_{6}^{2} = 0$ } & $-$ &   {\small Yes}  \\\hline
  {$\E_{6,43}(\alpha, \beta) = \E_{22}\oplus N_{4,5}(\alpha, \beta)$} &   {$e_{1}^{2} = e_{1},$ $e_{2}^{2} = e_{2},$ $e_{3}^{2} = e_{6},$ $e_{4}^{2} =  \alpha e_{6},$ $e_{5}^{2} =  \beta e_{6},$ $e_{6}^{2} = 0$ $\alpha, \beta \in K^{*}$ } & $-$ &   {\small Yes}  \\\hline
  {$\E_{6,44} = \E_{22}\oplus N_{4,6}$} &   {$e_{1}^{2} = e_{1},$ $e_{2}^{2} = e_{2},$ $e_{3}^{2} = e_{4} + e_{5},$ $e_{4}^{2} =  e_{6},$ $e_{5}^{2} =  -e_{6},$ $e_{6}^{2} = 0$ } & $-$ &   {\small No}  \\\hline
  {$\E_{6,45} = \E_{33}\oplus N_{3,1}$} &   {$e_{1}^{2} = e_{1},$ $e_{2}^{2} = e_{2},$ $e_{3}^{2} = e_{3},$ $e_{4}^{2} = e_{5}^{2} =  e_{6}^{2} = 0$} & $-$ &   {\small Yes}  \\\hline
  {$\E_{6,46} = \E_{33}\oplus N_{3,2}$} &   {$e_{1}^{2} = e_{1},$ $e_{2}^{2} = e_{2},$ $e_{3}^{2} = e_{3},$ $e_{4}^{2} = e_{5},$ $e_{5}^{2} =  e_{6}^{2} = 0$} & $-$ &   {\small Yes}  \\\hline
  {$\E_{6,47}(\alpha) = \E_{33}\oplus N_{3,3}(\alpha)$} &   {$e_{1}^{2} = e_{1},$ $e_{2}^{2} = e_{2},$ $e_{3}^{2} = e_{3},$ $e_{4}^{2} = e_{6},$ $e_{5}^{2} =  \alpha e_{6},$ $e_{6}^{2} = 0$ with $\alpha \in F^{*}$} & $-$ &   {\small Yes}  \\\hline
  {$\E_{6,48} = \E_{44}\oplus N_{2,1}$} &   {$e_{1}^{2} = e_{1},$ $e_{2}^{2} = e_{2},$ $e_{3}^{2} = e_{3},$ $e_{4}^{2} = e_{4},$ $e_{5}^{2} =  e_{6}^{2} = 0$ } & $-$ &   {\small Yes}  \\\hline
  {$\E_{6,49} = \E_{44}\oplus N_{2,2}$} &   {$e_{1}^{2} = e_{1},$ $e_{2}^{2} = e_{2},$ $e_{3}^{2} = e_{3},$ $e_{4}^{2} = e_{4},$ $e_{5}^{2} =  e_{6},$ $e_{6}^{2} = 0$ } & $-$ &   {\small Yes}  \\\hline
  {$\E_{6,50} = \E_{55}\oplus N_{1,1}$} &   {$e_{1}^{2} = e_{1},$ $e_{2}^{2} = e_{2},$ $e_{3}^{2} = e_{3},$ $e_{4}^{2} = e_{4},$ $e_{5}^{2} =  e_{5},$ $e_{6}^{2} = 0$ } & $-$ &   {\small Yes}  \\\hline
  {$\E_{6,51} = \E_{66}$} &   {$e_{1}^{2} = e_{1},$ $e_{2}^{2} = e_{2},$ $e_{3}^{2} = e_{3},$ $e_{4}^{2} = e_{4},$ $e_{5}^{2} =  e_{5},$ $e_{6}^{2} = e_{6}$ } & $-$ &   {\small Yes}  \\
\hline
\end{longtable}
}
\end{prop}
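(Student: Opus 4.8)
The plan is to obtain the table by reducing everything to the Wedderburn decomposition together with the already established lower-dimensional classifications. Let $\E$ be a power-associative evolution algebra with $\dim_F\E=6$. By Theorem~\ref{Jordan4} it is Jordan, and by the Wedderburn Theorem~\ref{Wedder} it splits as a direct sum of algebras
\[
\E \;=\; Ku_1\oplus\cdots\oplus Ku_s\;\oplus\;N,\qquad \E_{ss}:=Ku_1\oplus\cdots\oplus Ku_s,\quad N=Rad(\E),
\]
with $0\le s\le 6$, the $u_i$ pairwise orthogonal idempotents that extend to a natural basis of $\E$, and $N$ either $0$ or a nil power-associative evolution algebra of dimension $6-s$. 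As an evolution algebra $\E_{ss}$ is exactly the algebra with natural basis $u_1,\dots,u_s$ and $u_i^2=u_i$ (the algebra denoted $\E_{ss}$ in the tables), so its isomorphism type depends only on $s$. Since $N=Rad(\E)$ is an isomorphism invariant of $\E$ and $s=6-\dim_F N$, the assignment $(s,[N])\mapsto[\E_{ss}\oplus N]$ is a bijection between pairs (an integer $0\le s\le 6$, an isomorphism class of nil power-associative evolution algebra of dimension $6-s$) and isomorphism classes of $6$-dimensional power-associative evolution algebras. Hence the proof reduces to listing, for each $m\le 6$, the nil power-associative evolution algebras of dimension $m$, and then reading off the algebras $\E_{ss}\oplus N$.

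For $m\le 5$ these nil algebras are exactly the $N_{m,j}$ of Tables~1 and~2, obtained in the preceding propositions. For $m=6$ I would use the reduction to indecomposables recalled after Definition~\ref{indecom}: a decomposable nil power-associative evolution algebra is a direct sum of indecomposable ones of strictly smaller dimension, while an indecomposable one of dimension $6$ is either associative, hence one of $N_{6,16},N_{6,17},N_{6,18}$ by the six-dimensional nil associative classification, or not associative, hence one of $N_{6,19},\dots,N_{6,26}$ by the six-dimensional nil non-associative classification. Forming every direct sum, with total dimension $6$, of the indecomposable nil power-associative evolution algebras of dimensions $1$--$5$ (namely $N_{1,1}$; $N_{2,2}$; $N_{3,3}(\alpha)$; $N_{4,5}(\alpha,\beta)$ and $N_{4,6}$; $N_{5,8}(\alpha,\beta,\gamma)$, $N_{5,9}(\alpha,\beta)$, $N_{5,10}(\alpha)$, $N_{5,11}(\alpha)$, $N_{5,12}(\alpha,\beta)$) and removing repetitions yields $N_{6,1},\dots,N_{6,15}$, which completes the list $N_{6,1},\dots,N_{6,26}$ of $6$-dimensional nil power-associative evolution algebras.

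Running through the values of $s$ then gives the whole table: $s=0$ yields $N_{6,1},\dots,N_{6,26}$; $s=1$ yields $\E_{11}\oplus N_{5,j}$ ($1\le j\le 12$), i.e.\ $\E_{6,27},\dots,\E_{6,38}$; $s=2$ yields $\E_{22}\oplus N_{4,j}$ ($1\le j\le 6$), i.e.\ $\E_{6,39},\dots,\E_{6,44}$; $s=3$ yields $\E_{33}\oplus N_{3,j}$ ($1\le j\le 3$), i.e.\ $\E_{6,45},\E_{6,46},\E_{6,47}$; $s=4$ yields $\E_{44}\oplus N_{2,j}$ ($j=1,2$), i.e.\ $\E_{6,48},\E_{6,49}$; $s=5$ yields $\E_{55}\oplus N_{1,1}=\E_{6,50}$; and $s=6$ yields $\E_{66}=\E_{6,51}$. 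Each algebra so produced is power-associative (a direct sum of the associative $\E_{ss}$ with a power-associative nil $N$) and appears in Table~3, and conversely each row of Table~3 is visibly of one of these forms; this proves that every $6$-dimensional power-associative evolution algebra is isomorphic to one in the table.

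For the uniqueness part one checks that distinct entries are pairwise non-isomorphic, using invariants at each level: the dimension $s$ of the semisimple component separates the blocks of the table; within a fixed $s$ the nil radical is an invariant, so it suffices to separate the $N_{6,j}$, which is done by $\dim\ann(N)$, the type $[n_1,\dots,n_r]$ coming from the annihilator chain $\ann^i(N)$, $\dim_F N^2$, and the finer invariants $\U_i\oplus\U_1$ and $(\U_3\oplus\U_1)^2$ recalled in Section~2; and for a parametrized family one identifies the residual base-change freedom among the generators landing in the same component of that chain, which pins down exactly when two parameter values give isomorphic algebras. The main obstacle is this last bookkeeping, together with confirming that the two indecomposable classifications are exhaustive: the case analysis must track which $e_i$ satisfy $e_i^2\neq 0$, the relations forced by the power-associativity identities of Corollary~\ref{PAA} (chiefly $e_i^2e_j^2=0$ and $(e_i^2e_j)e_k=0$), and the admissible changes of natural basis; once all cases are exhausted the list is seen to be complete and irredundant.
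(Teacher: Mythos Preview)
Your proposal is correct and follows essentially the same route as the paper: the proposition is not given an independent proof there, but is obtained exactly as you describe, by combining the Wedderburn decomposition (Theorem~\ref{Wedder}) with the reduction to indecomposable nil algebras announced at the start of Section~4 and the six-dimensional indecomposable classifications already proved. Your outline of how the entries $N_{6,1},\dots,N_{6,26}$ and $\E_{6,27},\dots,\E_{6,51}$ are assembled, and of which invariants separate them, matches the paper's organization.
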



\end{document}